\documentclass[12pt]{article} 
\setlength{\textheight}{23cm}
\setlength{\textwidth}{16cm}
\setlength{\oddsidemargin}{0cm}
\setlength{\evensidemargin}{0cm}
\setlength{\topmargin}{0cm}
\usepackage{amsmath, amssymb}
\usepackage{amsthm} 
\usepackage{mathtools}
\usepackage{mathrsfs}
\usepackage{enumerate}
\usepackage{empheq}
\usepackage{esint}
\usepackage{bbm}
\usepackage{color}
\usepackage[T1]{fontenc}
\usepackage{textcomp}
\usepackage[abbrev]{amsrefs}
\usepackage[UKenglish]{babel}
\usepackage[UKenglish]{isodate}
\numberwithin{equation}{section}
%
%
%
\theoremstyle{plain} 
\newtheorem{theorem}{Theorem}[section]
\newtheorem{lemma}[theorem]{Lemma}
\newtheorem{corollary}[theorem]{Corollary}
\newtheorem{proposition}[theorem]{Proposition}

\theoremstyle{definition} 
\newtheorem{definition}[theorem]{Definition}
\newtheorem{remark}[theorem]{Remark}

\def\N{\mathbb N}

\def\R{\mathbb R}

\def\D{\mathcal D}

\def\F{\mathcal F}

\def\Re{\mathcal R}


\newcommand{\iO}{\int_\Omega}
\newcommand{\iB}{\int_{\Omega}}

\newcommand{\uv}{u_\varphi}

\usepackage{bbm}

\allowdisplaybreaks


%

\makeatletter
\def\address#1#2{\begingroup
\noindent\parbox[t]{7.8cm}{%
\small{\scshape\ignorespaces#1}\par\vskip1ex
\noindent\small{\itshape E-mail address}%
\/: #2\par\vskip4ex}\hfill%
\endgroup}%
\makeatother
%
\pagestyle{myheadings}
\markright{} 

\title{\uppercase{Concentration phenomena to a chemotaxis system with indirect signal production}} 
\author{
\bigskip \\
\textsc{Yuri Soga} 
}
\cleanlookdateon
\date{\today} 
%

\begin{document}

\maketitle

\footnote{ 
2020 \textit{Mathematics Subject Classification}.
Primary: 35B44; Secondary: 35B45, 35K20, 35K55, 92D40
}
\footnote{ 
\textit{Key words and phrases}.
Chemotaxis system, blowup, global solutions, Lyapunov functional.
}
\begin{abstract}
We consider a parabolic-ODE-parabolic chemotaxis system with radially symmetric initial data
in a two-dimensional disk under the $0$-Neumann boundary condition. Although our system shares similar mathematical structures as the Keller--Segel system,
the remarkable characteristic of the system we consider is that its solutions cannot blow up in finite time. 
In this paper, focusing on  blow-up  solutions in infinite time, we confirm concentration phenomena at the origin. It is shown that the radially symmetric solutions of our system have a singularity like a Dirac delta function in infinite time. This means that there exist a time sequence $\{t_k\}$, a weight $m \ge 8\pi$, and a nonnegative function $f \in L^1(\Omega)$ such that
\begin{align*}
u(\cdot,t_k) \stackrel{*}{\rightharpoonup} m \delta (0) + f\ \mathrm{as}\ t_k \to \infty.
\end{align*}
We highlight this result is obtained by showing uniform-in-time boundedness of some energy functional. Moreover, we 
study whether $m = 8\pi$ or $m > 8\pi$, which is an open problem in the Keller--Segel system.
It is proved that the weight $m$ of a delta function singularity is larger than $8\pi$
under a specific assumption associated with a Lyapunov functional. This finding suggests the relationship between solutions blowing up in infinite time and an unboundedness of a Lyapunov functional, which contrasts with the Keller--Segel system.

\end{abstract}

\section{Introduction}
We consider the initial boundary value problem:
\begin{equation}\label{p}\tag{P}
\begin{cases}
 u_t = \Delta u - \nabla \cdot (u \nabla w)\qquad &\mathrm{in}\ \Omega \times (0,\infty),\\
 v_t = -v + u\qquad &\mathrm{in}\ \Omega \times (0,\infty),\\
 w_t = \Delta w -w + v \qquad &\mathrm{in}\ \Omega \times (0,\infty),\\
\dfrac{\partial u}{\partial \nu} = \dfrac{\partial w}{\partial \nu} = 0 \qquad &\mathrm{on}\ \partial \Omega \times (0,\infty),\\
(u(\cdot,0),v(\cdot, 0), w(\cdot, 0)) = (u_0(\cdot),v_0(\cdot),w_0(\cdot))\qquad &\mathrm{in}\ \Omega,
\end{cases}
\end{equation}
where $\Omega$ is a disk in two-dimensional space and $(u_0, v_0, w_0)$ satisfies
\begin{equation}\label{c}
\begin{cases}
u_0 \in C(\overline{\Omega}),\quad u_0 \ge 0\ (u_0 \not\equiv 0)\quad &\mathrm{in}\ \Omega,\\
v_0 \in  C^1(\overline{\Omega}),\quad v_0 \ge 0 &\mathrm{in}\ \Omega,\\
w_0 \in C^2(\overline{\Omega}),\quad w_0 \ge 0 &\mathrm{in}\ \Omega.
\end{cases}
\end{equation}
Our purpose in this paper is to show the concentration phenomenon of the solutions to the above parabolic-ODE-parabolic chemotaxis system.

In a wide range of biological systems, pattern formation can be observed. Mathematical mechanisms of pattern formation in biological systems play an important role in biology.
Such phenomena in ecosystems can also be described by reaction-diffusion systems with an advection term, which is so-called chemotaxis systems. The system \eqref{p} introduced by \cite{STP2013, WP1998} models cluster attack of the Mountain Pine
Beetle (MPB) driven by chemotaxis.
In the system \eqref{p}, the unknown functions $u, v$, and $w$ respectively denote the density of the flying MPB, the density of the nesting MPB, and the concentration of beetle pheromone. The movement of the flying MPB is determined by the effects of diffusion and chemotaxis. An interesting aspect of MPB's life cycle is that the flying MPB spreads randomly while moves in the direction of concentration gradient of pheromone produced by the nesting beetles. Such characteristics of \eqref{p} are different from those of the Keller--Segel system, pioneered by Keller and Segel \cite{KS1970}, where the chemical substances is directly secreted by the cells itself. 

The Keller–Segel system given by
\begin{equation}\label{KS}\tag{KS}
\begin{cases}
 u_t = \Delta u - \nabla \cdot (u \nabla w)\qquad &\mathrm{in}\ \Omega \times (0,\infty),\\
\tau w_t = \Delta w -w + u \qquad &\mathrm{in}\ \Omega \times (0,\infty)
\end{cases}
\end{equation}
has been extensively studied, specifically for the case of two-dimensional space. 
The remarkable point of the study for large-time behaviors of solutions to the Keller--Segel system is the mass critical phenomena, which are often called ``$8\pi$-problem''. To state it precisely, 
if $\|u_0\|_{L^1(\Omega)} < 8\pi$, the corresponding radially symmetric classical solution exists globally in time and remains bounded (see \cite{B1998, NSY1997}).
While if 
$\|u_0\|_{L^1(\Omega)} > 8\pi$, then a finite time blowup may occur (see \cite{N1995, M2020_1, M2020_2}). Furthermore, the model of $\tau = 0$ in \eqref{KS} proposed by Nagai \cite{N1995} has a blowup criterion using the second moment given by \cite{N1995, SS2001_2}.
Here, the threshold value $8\pi$ is deeply related to the properties of a Lyapunov functional  and some functional inequalities.
It is shown that the Keller--Segel system has the following Lyapunov functional, which is an important mathematical structure: 
\begin{align*}
\dfrac{d}{dt}\mathcal{E}(u,w)(t) + \iB u|\nabla (\log u -w)|^2 dx + \|w_t\|_{L^2(\Omega)}^2 = 0,
\end{align*}
where 
\begin{align}
\mathcal{E}(u,w)(t) := \iB u\log u dx -\iB uw dx + \dfrac{1}{2}(\|w\|_{L^2(\Omega)}^2 + \|\nabla w\|_{L^2(\Omega)}^2).\label{eq:i1}
\end{align}
The combination of the Lyapunov functional and a consequence
derived from the Trudinger--Moser inequality (see \cite{CY1988, MJ1970}) leads to the results in \cite{B1998, NSY1997}. 
We may refer the reader to \cite{H2003, BBTW2015} which are a summary of research on the Keller--Segel system and some chemotaxis systems. 

On the other hand, Lauren\c cot \cite{L2019} showed the system \eqref{p} has the similar Lyapunov functional denoted by
\begin{align}
\F(u,w)(t) := \iB u\log u dx -\iB uw dx + \dfrac{1}{2}(\|w\|_{L^2(\Omega)}^2 + \|\nabla w\|_{L^2(\Omega)}^2 + \|w_t\|_{L^2(\Omega)}^2)\label{eq:i2}
\end{align}
and nonetheless exhibits the following mass critical phenomena different from the Keller--Segel system:
\begin{itemize}
  \item The classical solution of \eqref{p} always exists globally in time regardless of the size of the initial mass $\|u_0\|_{L^1(\Omega)}$ and moreover if $\|u_0\|_{L^1(\Omega)} < 4\pi$ (or $8\pi$ in the radially symmetric case), the corresponding solution of \eqref{p} is uniformly bounded in time.
  \item there exists some initial data with $\|u_0\|_{L^1(\Omega)}\in (4\pi, \infty) \setminus 4\pi \N$ (or $\|u_0\|_{L^1(\Omega)} > 8\pi$ in the radially symmetric case) such that the corresponding solution of \eqref{p} blows up in infinite time.
\end{itemize}

It appears to be interesting that the blowup time of solutions of \eqref{p} is delayed compared to the solutions of the Keller--Segel system. Nevertheless the research on \eqref{p} seems to be confined to \cite{L2019, LS2021}. Thus we now study the solution of \eqref{p} blowing up in infinite time in order to analyze a blowup behavior of the solution to \eqref{p} in greater detail. 

The solutions blowing up in finite time of \eqref{KS} have a singularity like a Dirac delta function in the blowup procedure, which is initially shown by Herrero and Vel\'azquez \cite{HV1996, HV1997} using the asymptotic expansions methods. We call such phenomena ``chemotactic collapse''. Nagai, Senba, and Suzuki \cite{NSS2000} and Senba and Suzuki \cite{SS2001} showed chemotactic collapse occurs if the
solution of \eqref{KS} blows up in finite time $T_{\mathrm{max}} < \infty$, with its proof based on an energy estimate. Namely, the component $u$ satisfies 
\begin{align*}
u(\cdot, t) \stackrel{*}{\rightharpoonup} m \delta(0) + f\ \mathrm{in}\ \mathscr{M}(\overline{\Omega})\ \mathrm{as}\ t \to T_{\mathrm{max}} < \infty,
\end{align*}
where the weight $m$ is some constant ($m \ge 8\pi$), and $\mathscr{M}(\overline{\Omega})$ is denoted by the dual space of $C(\overline{\Omega})$ and is called the space of Radon measures on $\overline{\Omega}$. Furthermore, Suzuki \cite{ST2005} studied the model of $\tau =0$ in \eqref{KS} and proved that $m = 8\pi$, referred to as the mass quantization. A similar result is shown even in the case of infinite time in the same model by means of a blowup criterion derived from the second moment (see \cite{SS2002, ST2005, SS2001_2}). Meanwhile, this method cannot be applied to the fully parabolic system \eqref{KS}, so that whether the mass quantization ($m = 8\pi$) occurs or not ($m > 8\pi$) remains an open problem.

The motivation of this paper is to clarify behaviors of the solutions blowing up in infinite time in the system \eqref{p}, which has similar mathematical structures as the Keller--Segel system. We expect the possibility of $m > 8\pi$ in the above limit under certain initial conditions. Our conjecture arises from \cite{ML2024, TW2017} in the study of parabolic-elliptic-ODE system
\begin{equation}\label{YL}
\begin{cases}
 u_t = \Delta u - \nabla \cdot (u \nabla v)\qquad &\mathrm{in}\ \Omega \times (0,\infty),\\
0 = \Delta v - \mu + w\qquad &\mathrm{in}\ \Omega \times (0,\infty),\\
w_t = -w + u \qquad &\mathrm{in}\ \Omega \times (0,\infty),
\end{cases}
\end{equation}
where $\mu = \frac{1}{|\Omega|}\iB w(x,t)dx$. 
 The pioneer work due to Tao and Winkler \cite{TW2017} provided a novel type of mass critical phenomenon, which differs from that of the Keller--Segel system. To be more specific, Tao and Winkler \cite{TW2017} exhibited solutions blowing up in infinite time of \eqref{YL}. Subsequently in \cite{ML2024}, it was established that given specific initial conditions with $\|u_0\|_{L^1(\Omega)} \ge 32\pi$, then the corresponding radially symmetric solution of \eqref{YL} blows up in infinite time and satisfies
\begin{align*}
u(\cdot, t) \stackrel{*}{\rightharpoonup} \|u_0\|_{L^1(\Omega)} \delta(0)\ \mathrm{in}\ \mathscr{M}(\overline{\Omega})\ \mathrm{as}\ t \to \infty. 
\end{align*}
The method of the proof in \cite{ML2024} is a comparison principle.
The advantage of the special form of the second equation in the system \eqref{YL} makes it possible to transform the system of equations into a single parabolic equation with a nonlocal term. Additionally, the single parabolic equation enables approaches employing a comparison principle (see \cite[Section 4]{TW2017}).
Hence the above results in \cite{ML2024} are gained by constructing subsolutions and supersolutions for the single differential equation.
On the other hand, this approach in \cite{ML2024} cannot be available for \eqref{p} and \eqref{KS}, which are essentially the system of coupled equations.
However, for the reason that the system of both \eqref{p} and \eqref{YL} have the ODE, 
the results in \cite{ML2024, TW2017} give our conjecture of $m > 8\pi$ for the system \eqref{p}. In this paper, we show a concentration of the mass in infinite time by invoking the Lyapunov functional of \eqref{p}. Let us introduce the definition. 

\begin{definition}
Let $(u,v,w)$ be a classical solution to \eqref{p} in $\Omega \times [0, \infty)$. 
\begin{enumerate}
\item[\rm{(i)}] We shall call $(u,v,w)$ a grow-up solution if it holds that
\begin{align*}
\limsup_{t \to \infty}\|u(t)\|_{L^\infty(\Omega)} = \infty.
\end{align*}
\item[\rm{(ii)}] We shall call $x_0 \in \overline{\Omega}$ a grow-up point if there exist $\{t_k\} \subset (0,\infty)$ and $\{x_k\} \subset \overline{\Omega}$ such that
\begin{align*}
t_k \to \infty,\quad x_k \to x_0,\quad u(x_k, t_k) \to \infty\ \mathrm{as}\ k \to \infty.
\end{align*}
Moreover, we shall denote by $\mathcal{G}$ the set of grow-up points.
\end{enumerate}
\end{definition}
We are interested in studying behaviors of the grow-up solution to \eqref{p} near grow-up points.
Therefore, when considering the grow-up solution of \eqref{p}, it is essential for grow-up points to exist (see Lemma \ref{lem:3.1111}).

\medskip
\textbf{Main Results.} In this paper, we always assume the radially symmetric grow-up solution of \eqref{p}. Hence our first task is to show that the grow-up point is the only origin.
\begin{theorem}\label{th:2}
Let $\Omega = B_R \subset \R^2$ with some $R > 0$ and $(u_0,v_0,w_0)$ be a triplet of radially symmetric functions satisfying \eqref{c}. Then $\mathcal{G} = \{0\}$.
\end{theorem}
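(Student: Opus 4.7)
The plan is to argue by contradiction, exploiting the rotational invariance inherited from radially symmetric initial data together with a local epsilon-regularity estimate for the $u$-component. First I note that since every ingredient of \eqref{p} (the differential operators, the no-flux boundary condition, and the initial data) is invariant under rotations about the origin, uniqueness of classical solutions forces $(u,v,w)$ to remain radial for every $t\ge 0$. Integrating the first equation of \eqref{p} over $\Omega$ and using the no-flux condition gives the mass conservation identity $\|u(\cdot,t)\|_{L^1(\Omega)}=\|u_0\|_{L^1(\Omega)}$ for all $t\ge 0$.

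The analytic core of the argument is a local epsilon-regularity estimate of the following type: there exists a universal constant $\varepsilon_0>0$ such that for any $x_*\in\overline{\Omega}\setminus\{0\}$ and any sufficiently small $r>0$,
\begin{equation*}
\sup_{t\ge 0}\int_{B_r(x_*)\cap\Omega}u(x,t)\,dx<\varepsilon_0
\end{equation*}
implies that $u$ stays uniformly bounded on $B_{r/2}(x_*)\cap\Omega$ for all $t\ge 0$. This is the local counterpart of the $4\pi$/$8\pi$ critical-mass phenomenon mentioned in the introduction. I would establish it in the standard way: first bootstrap local $L^p$ information through the chain $u\to v\to w$, using the Duhamel formula for $v$ together with parabolic $L^p$-theory for $w$ on a slightly larger ball, to obtain local $L^\infty$ control of the drift $\nabla w$; then run a Moser iteration on the $u$-equation with a cut-off supported near $x_*$ to promote the local $L^1$-smallness into a uniform $L^\infty$-bound. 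Both an interior and a near-boundary version would be needed, since a priori the grow-up point could lie on $\partial\Omega$; the latter uses the $0$-Neumann condition.

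Granted this estimate, the theorem follows by a short counting argument. Assume for contradiction that some $x_0\in\mathcal{G}$ satisfies $|x_0|>0$. By the definition of a grow-up point, $u$ is unbounded on every neighborhood of $x_0$ as $t\to\infty$, so the contrapositive of the epsilon-regularity applied at $x_0$ produces $r_0>0$ and a sequence $t_k\to\infty$ with
\begin{equation*}
\int_{B_{r_0}(x_0)\cap\Omega}u(x,t_k)\,dx\ge\varepsilon_0.
\end{equation*}
By radial symmetry, the same bound holds with $x_0$ replaced by $\rho x_0$ for any rotation $\rho$ about the origin, with the same sequence $t_k$. Since $|x_0|>0$, choosing $r\in(0,r_0]$ sufficiently small allows pairwise disjoint placement of arbitrarily many rotated balls $B_r(\rho_1 x_0)\cap\Omega,\ldots,B_r(\rho_N x_0)\cap\Omega$ inside $\Omega$. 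Picking $N$ with $N\varepsilon_0>\|u_0\|_{L^1(\Omega)}$ and summing the local mass bounds contradicts mass conservation. Together with Lemma \ref{lem:3.1111}, which guarantees $\mathcal{G}\neq\emptyset$ for a grow-up solution, this yields $\mathcal{G}=\{0\}$. The principal obstacle in this plan is clearly the epsilon-regularity estimate: the additional ODE layer $v$ in \eqref{p} means that the regularity of $\nabla w$ must be extracted with some care, but once it is in place, the remainder is a clean counting argument on the circle $|x|=|x_0|$.
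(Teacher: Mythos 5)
Your proof takes essentially the same route as the paper: an $\varepsilon$-regularity estimate (the paper's Proposition~\ref{prop:5.1}, stated there in the contrapositive form $\limsup_{t\to\infty}\int_{B_r(x_0)\cap\Omega}u\,dx\ge\frac{1}{200K_{\mathrm{Sob}}^2}$ for $x_0\in\mathcal G$) plus the counting argument that places finitely many pairwise disjoint rotated copies of $B_r(x_0)$ on the circle $|x|=|x_0|$ and derives a contradiction with mass conservation. The only place your sketch departs from the paper is the route to the $\varepsilon$-regularity: you suggest bootstrapping local $L^p$ integrability directly from local $L^1$-smallness through the chain $u\to v\to w$, whereas the paper first converts local $L^1$-smallness into a uniform-in-time local $L\log L$-bound for $u$ via the localized Lyapunov functional and a local Moser--Trudinger-type inequality (Lemmas~\ref{lem:3.3}--\ref{lem:3.5}, Corollary~\ref{rem:4}) and only then invokes Alikakos--Moser (Proposition~\ref{prop:3.11}); the intermediate $L\log L$ step is the heart of the matter and cannot be replaced by a naive $L^1\to L^p$ bootstrap, so this is worth filling in carefully if you carry your plan out.
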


We next state the mass concentration phenomenon to the grow-up solution of \eqref{p} at the origin.

\begin{theorem}\label{th:1}
Let $\Omega = B_R \subset \R^2$ with some $R > 0$ and $(u_0,v_0,w_0)$ be a triplet of radially symmetric functions satisfying \eqref{c}. 
If $(u,v,w)$ is a grow-up solution of \eqref{p}, then there exist a sequence $\{t_k\}$ of time with $t_k \to \infty$ as $k \to \infty$ and $m \ge 8\pi$ as well as a nonnegative function $f \in 
L^1(\Omega) \cap C(\overline{\Omega} \setminus \{0\})$ such that
\begin{align*}
u(\cdot, t_k) \stackrel{*}{\rightharpoonup} m \delta(0) + f\ \mathrm{in}\ \mathscr{M}(\overline{\Omega})\  \mathrm{as}\ k \to \infty.
\end{align*}
That is to say, it holds that for any $\xi \in C(\overline{\Omega})$,
\begin{align*}
\iB u(x,t_k)\xi(x) dx \rightarrow m\xi(0) + \iB f(x)\xi(x) dx\ \mathrm{as}\ k \to \infty.
\end{align*}
\end{theorem}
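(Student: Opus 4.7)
The plan is to adapt the chemotactic-collapse strategy of Nagai--Senba--Suzuki for the Keller--Segel system, replacing the Lyapunov functional $\mathcal{E}$ of \eqref{KS} by the one $\mathcal{F}$ in \eqref{eq:i2} available for \eqref{p}. First, integrating the $u$-equation and using the no-flux condition yields mass conservation $\|u(\cdot,t)\|_{L^1(\Omega)}=\|u_0\|_{L^1(\Omega)}=:M$ for all $t>0$; hence $\{u(\cdot,t)\}_{t>0}$ is bounded in $\mathscr{M}(\overline{\Omega})=C(\overline{\Omega})^{*}$. Banach--Alaoglu combined with separability of $C(\overline{\Omega})$ produces $t_{k}\to\infty$ and a nonnegative Radon measure $\mu$ with $u(\cdot,t_{k})\stackrel{*}{\rightharpoonup}\mu$; testing with $\xi\equiv 1$ enforces $\mu(\overline{\Omega})=M$.

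Next I would invoke Theorem~\ref{th:2}: since $\mathcal{G}=\{0\}$, the solution $u$ is uniformly bounded on $(\overline{\Omega}\setminus B_{\varepsilon}(0))\times(0,\infty)$ for every $\varepsilon\in(0,R)$, for otherwise one could extract a subsequence exposing a grow-up point away from the origin. Standard parabolic Schauder theory applied to $u_{t}=\Delta u-\nabla\cdot(u\nabla w)$ on this set then delivers uniform H\"older estimates on compact subsets of $(\overline{\Omega}\setminus\{0\})\times(0,\infty)$. A diagonal extraction (relabelling $\{t_{k}\}$) yields $u(\cdot,t_{k})\to f$ locally uniformly on $\overline{\Omega}\setminus\{0\}$ with $f\in C(\overline{\Omega}\setminus\{0\})$ and $f\ge 0$; Fatou's lemma gives $f\in L^{1}(\Omega)$, identifies $\mu|_{\overline{\Omega}\setminus\{0\}}=f\,dx$, and therefore $\mu=m\delta_{0}+f$ with $m:=\mu(\{0\})=M-\int_{\Omega}f\,dx\ge 0$.

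The crux is $m\ge 8\pi$, which I would prove by contradiction. If $m<8\pi$, then since $f\in L^{1}(\Omega)$ one can choose $\varepsilon>0$ so small that $m+\int_{B_{2\varepsilon}(0)}f\,dx$ is still strictly below $8\pi$; the decomposition of $\mu$ obtained above then gives $\int_{B_{2\varepsilon}(0)}u(\cdot,t_{k})\,dx<8\pi-\eta$ for some $\eta>0$ and all large $k$. Combining the Lauren\c cot identity $\mathcal{F}(u(t),w(t))\le\mathcal{F}(u_{0},w_{0})$---the ``uniform-in-time boundedness of some energy functional'' advertised in the abstract---with a localized Trudinger--Moser inequality on $B_{2\varepsilon}(0)$ in the Brezis--Merle spirit, and using a radial cut-off of $w$, I expect to convert the subcritical local mass into a uniform $L\log L$ bound for $u$ on $B_{\varepsilon}(0)\times(0,\infty)$. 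Parabolic bootstrapping through the $v$- and $w$-equations then upgrades this to a uniform $L^{\infty}$ bound of $u$ on $B_{\varepsilon/2}(0)\times(0,\infty)$, contradicting $0\in\mathcal{G}$.

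The main obstacle is precisely this last step: the indirect-signal coupling places an ODE for $v$ between $u$ and $w$, so the usual Trudinger--Moser/entropy argument for \eqref{KS} does not transfer verbatim. The extra $\|w_{t}\|_{L^{2}}^{2}$ term in $\mathcal{F}$ and the weaker regularity transfer through the $v$-ODE require care, and the localization must be engineered so that the subcritical local mass hypothesis genuinely enters the inequality. This is almost certainly where the particular energy functional named in the abstract has to be constructed, perhaps as a localized or weighted variant of $\mathcal{F}$ rather than $\mathcal{F}$ itself.
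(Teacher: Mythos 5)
Your overall strategy mirrors the paper's: extract a subsequence, build a continuous limit $f$ away from the origin via parabolic regularity and Ascoli--Arzel\`a, decompose the weak-$*$ limit as $m\,\delta(0)+f\,dx$, and deploy a Lyapunov functional together with a localized Trudinger--Moser inequality to reach $m\ge 8\pi$. Your closing guess that a localized or weighted variant of $\F$ has to be constructed is exactly right: the paper's Lemma \ref{lem:3.4} introduces the localized functional $\F_\varphi$, and Lemma \ref{lem:6.2} proves $\sup_{t>0}\F_\varphi(t)<\infty$ via the pivotal comparison $\F_\varphi(t)\le\F(t)+C$, which rests on the boundedness of $u,w$ away from the origin supplied by Lemma \ref{lem:6.1}. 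This is the ``uniform-in-time boundedness of some energy functional'' advertised in the abstract, and it is the main new ingredient compared with the finite-time argument of Nagai--Senba--Suzuki.

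There is, however, a genuine logical gap in your argument for $m\ge 8\pi$. From the weak-$*$ decomposition you obtain $\int_{B_{2\varepsilon}}u(\cdot,t_k)\,dx < 8\pi-\eta$ only along the chosen subsequence $\{t_k\}$. But the $\varepsilon$-regularity step you then invoke --- subcritical local mass yielding a uniform $L\log L$ control and hence a uniform $L^\infty$ bound for $u$ near the origin, as in Proposition \ref{prop:3.11} --- requires the local mass to stay below threshold for \emph{all} sufficiently large $t$, not merely along a subsequence. At times outside $\{t_k\}$ the local mass could well exceed $8\pi$, so no uniform bound near the origin follows and no contradiction with $0\in\mathcal{G}$ arises. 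The paper avoids this by reversing the order of operations: it first proves Proposition \ref{prop:6.6}, namely $\limsup_{t\to\infty}\int_\Omega u\varphi\,dx\ge 8\pi$ for cut-offs $\varphi$ near the origin, whose negation \emph{does} yield a uniform-in-time subcritical mass bound and hence a genuine contradiction with grow-up at $0$. Only afterwards is the subsequence $\{t_k\}$ selected so that $\lim_k\int_\Omega u(t_k)\varphi\,dx\ge 8\pi$, giving $m(r)\ge 8\pi$ and then $m=\lim_{r\to 0}m(r)\ge 8\pi$. Establishing the lower bound on the $\limsup$ before extracting the subsequence is essential; your outline inverts this order and the contradiction does not close.
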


\begin{remark}
Theorem \ref{th:1} is a similar result as \cite{NSS2000}, which is the study for the solutions blowing up in finite time of the Keller--Segel system. However, we emphasize that our theorem is not a direct consequence of the argument in \cite{NSS2000}. In the proof of the equivalence between the unboundedness of $L\log L$ for the first component $u$ around $x_0$ and $x_0$ is the blowup points, the method in \cite{NSS2000} is to obtain the local estimates in time for the solutions of the Keller--Segel system. On the other hand, our approach is based on the global estimates in time that consistently accounts for the global-in-time solution of \eqref{p}. Moreover, whether the time $t$ is a full sequence or a subsequence in the blowup procedure also differs. In fact, $f$ is denoted by the limit function of $u$ in the area away from the origin. Then, in \cite{NSS2000}, $\lim_{t \to T_\mathrm{max}}u(x,t)$ exists in the domain excluding the neighborhood of the origin, which is derived from the boundedness of $u_t$ in such an area.
Meanwhile, it dose not hold in our case as $t \to \infty$ due to the possibility of oscillation, thereby we show Theorem \ref{th:1} with the use of the compact theorem of the H\"older space. Consequently, we obtain the subsequence of time satisfying the convergence in Theorem \ref{th:1}.

\end{remark}

The next challenge is whether $m = 8\pi$ or $m > 8\pi$ in Theorem \ref{th:1}. We will present the following theorem for this question.

\begin{theorem}\label{th:3}
Let $\Omega = B_R \subset \R^2$ with some $R > 0$ and $(u_0,v_0,w_0)$ be a triplet of radially symmetric functions satisfying \eqref{c} and $\|u_0\|_{L^1(\Omega)} > 8\pi$. If a grow-up solution $(u,v,w)$ of \eqref{p} satisfies 
\begin{align}
\inf_{t \ge 0} \F (t) > -\infty,\label{eq:i3}
\end{align}
then there exists a sequence $\{t_k\}$ of time with $t_k \to \infty$ as $k \to \infty$ such that
\begin{align*}
u(\cdot, t_k) \stackrel{*}{\rightharpoonup} \|u_0\|_{L^1(\Omega)} \delta(0)\ \mathrm{in}\ \mathscr{M}(\overline{\Omega})\ \mathrm{as}\ k \to \infty.
\end{align*}
\end{theorem}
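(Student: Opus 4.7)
The plan is to combine the decomposition from Theorem~\ref{th:1} with the asymptotic stationarity forced by the dissipation of the Lyapunov functional~\F, and to read off the singular profile of $w$ at the origin in order to kill the regular part of the limit.

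First, by Theorem~\ref{th:1} I obtain a sequence $t_k \to \infty$ along which $u(\cdot,t_k) \stackrel{*}{\rightharpoonup} m\,\delta(0) + f$ with $m \ge 8\pi$, $f \ge 0$ and $f \in L^1(\Omega) \cap C(\overline{\Omega}\setminus\{0\})$. Integrating the first equation of \eqref{p} over $\Omega$ yields mass conservation, hence $m + \iB f\,dx = \|u_0\|_{L^1(\Omega)}$, and the claim reduces to $f \equiv 0$. Following \cite{L2019}, differentiating \eqref{eq:i2} along \eqref{p} produces $\frac{d}{dt}\F(t) + \D(t) = 0$ with a nonnegative dissipation $\D$ containing $\iB u\,|\nabla(\log u - w)|^2\,dx$ and controlling a norm of $w_t$; assumption \eqref{eq:i3} then forces $\int_0^\infty \D(s)\,ds < \infty$, so I may refine $\{t_k\}$ to guarantee $\D(t_k) \to 0$.

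Next, Theorem~\ref{th:2} tells me that $(u,v,w)$ is uniformly bounded on every annulus $\Omega_\rho := \{\rho < |x| < R\}$, and parabolic Schauder estimates (of the sort already exploited in Theorem~\ref{th:1}) furnish subsequential $C^{2,\alpha}_{\mathrm{loc}}(\Omega\setminus\{0\})$ limits $(f, v_\infty, w_\infty)$. The vanishing of $\D(t_k)$ translates in the limit into $-\Delta w_\infty + w_\infty = v_\infty$ in $\Omega\setminus\{0\}$, $v_\infty = f$ (obtained from $v_t = u-v$ together with $w_t(t_k) \to 0$), and the gradient identity $\nabla(\log f - w_\infty) = 0$ on $\{f > 0\}$. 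Continuity of $f$ on $\overline{\Omega}\setminus\{0\}$ combined with the identity $\log f = w_\infty + \text{const}$ on each component of $\{f > 0\}$ prevents $f$ from vanishing on an interior boundary of its support; by radial symmetry and the connectedness of $\Omega\setminus\{0\}$, it follows that either $f \equiv 0$ or $f = C\,e^{w_\infty}$ throughout $\Omega\setminus\{0\}$ for some constant $C > 0$.

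The final step is to rule out the second alternative. I interpret $-\Delta w_\infty + w_\infty = m\,\delta(0) + f$ on all of $\Omega$ in $\mathscr{D}'$ and invert with the Neumann Green's function of $-\Delta + 1$, obtaining $w_\infty(x) = -\tfrac{m}{2\pi}\log|x| + O(1)$ near the origin, and therefore $f(x) \sim C|x|^{-m/(2\pi)}$. The requirement $f \in L^1(\Omega)$ then demands $m/(2\pi) < 2$, i.e.\ $m < 4\pi$, which contradicts $m \ge 8\pi$ from Theorem~\ref{th:1}; hence $C = 0$, so $f \equiv 0$, and mass conservation gives $m = \|u_0\|_{L^1(\Omega)}$. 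I expect the principal technical obstacle to lie in the limit passage of the second step: the dissipation controls $\nabla(\log u - w)$ only on the support of $u$, so propagating $f = C e^{w_\infty}$ to all of $\Omega\setminus\{0\}$ requires combining it with continuity of $f$ and radial symmetry, and the distributional extension of the elliptic identity across the origin (which supplies the sharp coefficient $m/(2\pi)$) must be justified carefully because $u(\cdot,t_k)$ converges only weakly-$*$ near the singularity.
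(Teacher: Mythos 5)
Your proof takes a genuinely different route from the paper in the decisive final step, and it is essentially correct. Both arguments pass to a stationary limit $(u_\infty, v_\infty, w_\infty)$ along a sequence with vanishing dissipation, and both establish $u_\infty = v_\infty = f$, $-\Delta w_\infty + w_\infty = f$ in $\Omega\setminus\{0\}$, and (if $f\not\equiv 0$) $f = Ce^{w_\infty + c}$ throughout $\Omega\setminus\{0\}$ by the continuity-of-the-support argument. Where they diverge is the mechanism for forcing $f\equiv 0$. The paper combines three integral lemmas — $\int u_\infty \le \|u_0\|_{L^1}-8\pi$ (Lemma \ref{lem:7.11}), $\int w_\infty = \|u_0\|_{L^1}$ (Lemma \ref{lem:7.12}), and Mizoguchi's estimate $\int u_\infty - \int w_\infty \ge -2\pi m_0$ for $m_0\in(1,4)$ (Lemma \ref{lem:7.13}, cited as a black box) — into a numerical contradiction. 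You instead extend the elliptic equation across the origin as $-\Delta w_\infty + w_\infty = m\delta(0)+f$ in $\mathscr{D}'(\Omega)$ and read off the logarithmic singularity of $w_\infty$, hence a nonintegrable power-law lower bound for $f = Ce^{w_\infty}$; this is self-contained, avoids the external citation, and in fact yields the contradiction already for $m\ge 4\pi$. Your instinct about the technical pressure points is right, and here is where the details must be filled in. First, the delta mass $m$ must be shown to carry over from $u(t_k)$ to $v(t_k)$: this follows from $\|v(t_k)\|_{L^1(\Omega)}\to\|u_0\|_{L^1(\Omega)}$ (Proposition \ref{prop:3.1.0}(ii)), the local uniform convergence $v(t_k)\to f$ away from the origin, and $m+\int f = \|u_0\|_{L^1}$ from Theorem \ref{th:1} tested against $\xi\equiv 1$. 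Second, to pass $w_t = \Delta w - w + v$ to the distributional limit near the singularity one uses that $\nabla w(t_k)\rightharpoonup\nabla w_\infty$ weakly in $L^p(\Omega)$ for some $p\in(1,2)$ by \eqref{pro:3.2.2} plus a.e.\ convergence, and dominated convergence for $w(t_k)$ from the pointwise bound in Proposition \ref{pro:322}. Third, the "$O(1)$" remainder in $w_\infty(x) = -\frac{m}{2\pi}\log|x| + O(1)$ is stronger than what $f\in L^1$ alone provides; integrating the distributional equation over $B_\rho$ gives only $w_\infty'(\rho) = -\frac{m}{2\pi\rho}\bigl(1+o(1)\bigr)$, hence a $o(\log(1/|x|))$ error term — but that still yields $f(x)\ge c|x|^{-m/(2\pi)+\epsilon}$ for every $\epsilon>0$, which is nonintegrable since $m/(2\pi)\ge 4>2$, so the contradiction survives. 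Both your argument and the paper's rely on the subsequence refinement giving $\D(t_k)\to 0$ while preserving the earlier limits (Proposition \ref{pro:7.1}), so that is a shared rather than an additional concern.
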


Theorem \ref{th:3} contributes to deriving the following result, which can never be attained by the Keller--Segel system.
\begin{corollary}\label{cor:1}
Let $\Omega = B_R \subset \R^2$ with some $R > 0$ and $(u_0,v_0,w_0)$ be a triplet of radially symmetric functions satisfying \eqref{c} and $\|u_0\|_{L^1(\Omega)} > 8\pi$. Then a radially symmetric grow-up solution of \eqref{p} satisfies either of the followings:
\begin{enumerate}
\item[\rm{(i)}] there exists a sequence $\{t_k\}$ of time with $t_k \to \infty$ as $k \to \infty$ such that
\begin{align*}
u(\cdot, t_k) \stackrel{*}{\rightharpoonup} \|u_0\|_{L^1(\Omega)} \delta(0)\ \mathrm{in}\ \mathscr{M}(\overline{\Omega})\ \mathrm{as}\ k \to \infty.
\end{align*}
\item[\rm{(ii)}] It holds that
\begin{align*}
\inf_{t \ge 0} \F(t) = -\infty.
\end{align*}
\end{enumerate}

\end{corollary}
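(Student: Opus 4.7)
The statement is a straightforward dichotomy built on top of Theorem \ref{th:3}, so my plan is essentially a trichotomy-to-dichotomy argument: given a radially symmetric grow-up solution $(u,v,w)$ of \eqref{p} with $\|u_0\|_{L^1(\Omega)} > 8\pi$, the real-valued quantity $\inf_{t \ge 0} \F(t)$ either is finite or equals $-\infty$, and these two cases line up exactly with the conclusions (i) and (ii).

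First, I would split on whether the hypothesis \eqref{eq:i3} of Theorem \ref{th:3} holds. If $\inf_{t \ge 0} \F(t) > -\infty$, then the radial symmetry of $(u_0,v_0,w_0)$, the assumption $\|u_0\|_{L^1(\Omega)} > 8\pi$, and the grow-up property are precisely the hypotheses of Theorem \ref{th:3}, so I may invoke that theorem to obtain a sequence $t_k \to \infty$ along which
\begin{align*}
u(\cdot, t_k) \stackrel{*}{\rightharpoonup} \|u_0\|_{L^1(\Omega)}\, \delta(0)\ \mathrm{in}\ \mathscr{M}(\overline{\Omega})\ \mathrm{as}\ k \to \infty,
\end{align*}
which is exactly conclusion (i). If on the other hand $\inf_{t \ge 0} \F(t)$ is not bounded below, then since $\F(t)$ is a real number for every $t \ge 0$ (this is a standing regularity fact for the classical solution; see the discussion surrounding \eqref{eq:i2}), the only remaining possibility is $\inf_{t \ge 0} \F(t) = -\infty$, which is conclusion (ii).

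The only thing that needs a brief sanity check is that the alternatives (i) and (ii) are exhaustive in the logical sense I am using, that is, that ``$\inf \F > -\infty$'' and ``$\inf \F = -\infty$'' really do partition the possibilities; this is immediate from the fact that for the classical grow-up solution of \eqref{p} each of the ingredients $\iB u\log u$, $\iB uw$, $\|w\|_{L^2}^2$, $\|\nabla w\|_{L^2}^2$, $\|w_t\|_{L^2}^2$ defining $\F$ in \eqref{eq:i2} is finite for every $t \ge 0$, so that $\F(t) \in \R$ for every $t \ge 0$ and the infimum is a well-defined element of $\RR$. There is no real obstacle here beyond citing Theorem \ref{th:3}; the entire content of the corollary is the observation that Theorem \ref{th:3}'s hypothesis \eqref{eq:i3} is a condition whose negation is exactly (ii), so Theorem \ref{th:3} can be re-packaged as a dichotomy without any additional analysis.
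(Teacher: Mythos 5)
Your proof is correct and is essentially the paper's (implicit) argument: the corollary is a direct repackaging of Theorem \ref{th:3} as a dichotomy on whether $\inf_{t\ge 0}\F(t)$ is finite or $-\infty$, with the exhaustiveness of the two cases following from $\F(t)\in\R$ for each $t$. Nothing more is needed.
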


Theorem \ref{th:3} implies that $m = \|u_0\|_{L^1(\Omega)} > 8\pi$ in Theorem \ref{th:1} holds under the assumption \eqref{eq:i3}.
Moreover, Theorem \ref{th:3} is unachievable in the Keller--Segel system. Indeed, Mizoguchi \cite{M2020_1, M2020_2} in the study of the Keller--Segel system constructed the ordinary differential inequality for the Lyapunov functional \eqref{eq:i1} in the form of
\begin{align*}
-\dfrac{d}{dt}\mathcal{E}(u,w)(t) \ge C_1\left(-\mathcal{E}(u,w)(t)\right)^\theta -C_2,
\end{align*}
where $(u,w)$ is a radially symmetric solution of the Keller--Segel system and $\theta > 1$. This inequality implies that if a radially symmetric solution $(u,w)$ exists globally in time, then the Lyapunov functional $\mathcal{E}$ is uniformly bounded from below in time $(0,\infty)$. Combining the fact and the investigation of a stationary problem in the domain except the blowup point, Mizoguchi \cite{M2020_1} extended the results of \cite{HW2001} in the radial case as follows:
\begin{itemize}
  \item If the radially symmetric classical solution of \eqref{KS} corresponding to $\|u_0\|_{L^1(\Omega)} > 8\pi$ exists globally in time, then the solution is always uniformly bounded in time.
  \item there exists some initial data with $\|u_0\|_{L^1(\Omega)} > 8\pi$ such that the corresponding radially symmetric solution of \eqref{KS} blows up in finite time.
\end{itemize}
The important point to note here is the first result above, which says a concentration of the mass in infinite time cannot happen in the supercritical case ($\|u_0\|_{L^1(\Omega)}  > 8\pi$). That is to say, the assumption in Theorem \ref{th:3} is false in the case of the Keller--Segel system.
We draw inspiration from the technique in \cite{M2020_1}, thereby we show the concentration phenomena of the solution of \eqref{p} under the assumption \eqref{eq:i3}.
Here, as mentioned above, the radially symmetric global solution of the Keller--Segel system always satisfies the boundedness from below in time $(0, \infty)$ for the Lyapunov functional $\mathcal{E}$. This fact was revealed by Mizoguchi \cite{M2020_1}, but there is a background conjectured by Ohtsuka, Senba, and Suzuki \cite{OSS2007} regarding the model of $\tau = 0$ in \eqref{KS}. In contrast, whether the assumption \eqref{eq:i3} holds for a grow-up solution of \eqref{p} is an open problem, yet Theorem \ref{th:3} gives the relationship between grow-up solution and the boundedness of the Lyapunov functional of \eqref{p}.

\medskip
\begin{remark} Theorem \ref{th:3} enables us to suggest three arguments as follows for a grow-up behavior in the system \eqref{p}. 
\begin{itemize}
\item The first one is about the value of the weight $m$ of the delta function. We now obtain $m = \|u_0\|_{L^1(\Omega)} > 8\pi$ in Theorem \ref{th:3}, which differs from \cite{NSS2000} and is not observed in \cite{SS2002, ST2005}. Also, we succeed in extending the range of the weight $m = \|u_0\|_{L^1(\Omega)}$ compared to \cite{ML2024}, in which the range is $m \ge 32\pi$.
\item The second one is about the global behavior of the Lyapunov functional. If it holds that the weight $m$ should be $8\pi$ in Theorem \ref{th:1} just as \cite{SS2002, ST2005}, Theorem \ref{th:3} indicates there exists a radially symmetric grow-up solution of \eqref{p} corresponding the initial data $\|u_0\|_{L^1(\Omega)} > 8\pi$ such that $\inf_{t \ge 0} \F(t) = -\infty$. This argument is unlike for the Keller--Segel system. 
\item The third one is about the possibility that the assumption in Theorem \ref{th:3} is false, that is, the radially symmetric global in time solutions of \eqref{p} satisfying \eqref{eq:i3} remain bounded. However, we already know that there exist radially symmetric grow-up solutions of \eqref{p} with the initial data $\|u_0\|_{L^1(\Omega)} > 8\pi$ (see \cite{L2019}). Therefore, if the assumption is false, we can conclude that the radially symmetric grow-up solutions of \eqref{p} satisfy $\inf_{t \ge 0} \F(t) = -\infty$. This gives rise to the conclusion different from the Keller--Segel system.
\end{itemize}
In any situation, we reveal new differences about large-time behaviors of solutions to \eqref{p} and the Keller--Segel system thanks to Theorem \ref{th:3}.
\end{remark}

\medskip
\textbf{Plan of this paper.}
This paper is organized as follows. In Section 2, we prepare some estimates and properties which the solution of \eqref{p} satisfies.
After that, we will introduce fundamental inequalities and lemmas to proceed an argument in a localized area in Subsection \ref{subsec:3.1}.
In Subsection \ref{subsec:3.2}, we will derive time-independent estimates of $L\log L$ for the first component $u$  in a localized area. Subsection \ref{subsec:3.3} is devoted to the Alikakos--Moser iteration argument. We will obtain time-independent a priori estimates of the solution in Subsection \ref{subsec:3.3}. Subsequently, we will enter the stage of proving our theorem.
Subsection \ref{subsec:3.4} is devoted to the proof of Theorem \ref{th:2}, which follows from an $\varepsilon$-regularity theorem.
We finally will prove Theorem \ref{th:1} (Subsection \ref{subsec:3.5}) and Theorem \ref{th:3} (Subsection \ref{subsec:3.6}).

\section{Preliminaries}

In this section, we collect several properties of solutions to \eqref{p}.
Through this paper,  we take advantage of the well known properties regarding the Laplacian with $0$-Neumann boundary condition in $L^p(\Omega)$ for $1 < p < \infty$, where $\Omega$ is a bounded domain with a smooth boundary in two-dimensional space. Let denote $\Delta_{N}$ the realization of the Laplacian in $L^p(\Omega)$ for $1<p < \infty$ defined on
\begin{align*}
D(\Delta_N) := \left\{z \in W^{2,p}(\Omega) ; \dfrac{\partial z}{\partial \nu} = 0\ \mathrm{on}\ \partial \Omega\right\}.
\end{align*}
Then $\Delta_N$ is sectorial operator so that $\Delta_N$ is a generator of an analytic semigroup. We call such an analytic semigroup denoted by $\{e^{t\Delta_N}\}_{t \ge 0}$ the Neumann heat semigroup. 
For more details we refer the reader to \cite{HD1981, LA2015}.

We now emphasize that the unique classical solution to \eqref{p} exists globally in time regardless of the initial mass (see \cite[Theorem 1.1]{L2019}).

\begin{proposition}\label{pro:loc1}
Let $(u_0,v_0,w_0)$ be as in \eqref{c}. Then there exists a unique positive classical solution $(u, v, w)$ satisfying 
\begin{align*}
&u \in C^{2,1}(\overline{\Omega} \times (0,\infty)) \cap C([0,\infty) ; C(\overline{\Omega})),\\
&v \in C^{1,1}(\overline{\Omega}\times (0,\infty)) \cap C^1([0,\infty); C(\overline{\Omega})),
\\
&w \in C^{2,1}(\overline{\Omega} \times (0,\infty)) \cap C([0,\infty) ; C^1(\overline{\Omega})).
\end{align*}
\end{proposition}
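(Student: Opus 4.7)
\textbf{Proof proposal for Proposition \ref{pro:loc1}.}
Since this is Theorem 1.1 of \cite{L2019}, the plan is really to indicate the standard semigroup/fixed-point construction for local existence, together with the structural feature of the indirect signal production that upgrades local to global existence. First I would construct a unique local-in-time mild solution by a contraction mapping argument on the Banach space $X_T := C([0,T];C(\overline{\Omega}))^3$ for small $T>0$, representing
\begin{align*}
u(t) &= e^{t\Delta_N}u_0 - \int_0^t e^{(t-s)\Delta_N}\nabla\cdot(u(s)\nabla w(s))\,ds,\\
v(t) &= e^{-t}v_0 + \int_0^t e^{-(t-s)}u(s)\,ds,\\
w(t) &= e^{t(\Delta_N-1)}w_0 + \int_0^t e^{(t-s)(\Delta_N-1)}v(s)\,ds,
\end{align*}
and using the standard $L^p$--$L^q$ smoothing estimates for the Neumann heat semigroup (in particular the gradient bound $\|\nabla e^{t\Delta_N}\varphi\|_{L^q}\le C(1+t^{-1/2-(1/p-1/q)/2})\|\varphi\|_{L^p}$) to handle the chemotactic term. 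This yields a unique classical solution on a maximal interval $[0,T_{\max})$ with the regularity asserted, by standard parabolic Schauder theory applied once $u,v,w$ are shown to be Hölder continuous.

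Next I would verify non-negativity: applying the weak maximum principle to the ODE for $v$ preserves $v\ge 0$ whenever $u\ge 0$; the equation for $w$ is a linear parabolic inhomogeneous problem with source $v\ge 0$, giving $w\ge 0$; and $u$ satisfies a linear parabolic equation with drift $-\nabla w$ and thus $u\ge 0$ follows from the parabolic maximum principle (and strict positivity on $(0,T_{\max})\times\overline{\Omega}$ by the strong maximum principle, since $u_0\not\equiv 0$).

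The main obstacle is global existence, i.e.\ ruling out $T_{\max}<\infty$, because in the Keller--Segel analogue one does have finite-time blowup in two dimensions. Here the indirect production mitigates the feedback: $v$ is obtained from $u$ without spatial derivative, and $w$ is a time convolution of $v$ with a heat kernel, so the coupling loses one derivative. Concretely, I would exploit that $\|v(t)\|_{L^1}$ and hence $\|w(t)\|_{L^1}$ are controlled by $\|u_0\|_{L^1}=\|u(t)\|_{L^1}$ together with the dissipative ODE for $v$, and then run a Moser--Alikakos iteration on $u$, using that $\Delta w = w_t + w - v$ is controlled in $L^p$ norms by the regularizing effect of $e^{t(\Delta_N-1)}$ applied to $v$. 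The gain of one time derivative in the chain $u\mapsto v\mapsto w$ prevents the critical threshold from appearing on any finite horizon, and the bootstrap closes in any $L^p$ with $p<\infty$ on every bounded time interval, giving $T_{\max}=\infty$; the remaining regularity $u\in C^{2,1}$, $w\in C^{2,1}$, $v\in C^{1,1}$ then follows from parabolic Schauder estimates once $\nabla w$ is Hölder continuous, and $v\in C^1([0,\infty);C(\overline{\Omega}))$ from its ODE representation. For the full details I would refer to \cite{L2019}.
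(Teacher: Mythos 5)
The paper's own "proof" of this proposition is a citation to Theorem~1.1 of \cite{L2019}, accompanied by a one-sentence remark that local existence rests on Banach's fixed point theorem (\cite{QS2007, W1980}) and Schauder estimates (\cite{LSU1968}). Your proposal expands on this in the same spirit — semigroup/Duhamel formulation, contraction mapping, maximum-principle positivity, and Schauder regularity — and defers to \cite{L2019} for the rest, so the approach is consistent with the paper's.

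One point you should be careful about: the global-existence step is the actual content of Laurençot's Theorem~1.1, and as written your sketch is too optimistic. Mass conservation of $u$ plus a Moser--Alikakos iteration do \emph{not} close on their own — they do not close for the Keller--Segel system either, which also conserves mass and yet admits finite-time blowup. The real mechanism is the delay in $v(t)=e^{-t}v_0+\int_0^t e^{-(t-s)}u(s)\,ds$: $v$, and hence $w$, only "see" the time-averaged past of $u$, so a prospective concentration of $u$ at some finite time cannot be instantaneously reinforced through the chemotactic drift. Making this quantitative — typically a Gronwall-type inequality showing $\sup_{[0,T]}\|u(t)\|_{L^p(\Omega)}<\infty$ for every finite $T$ and suitable $p$, with a constant that may grow with $T$ — is where the work lies, and the phrase "gain of one time derivative" gestures at this without actually sealing the bootstrap. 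Since the paper also simply cites \cite{L2019} for this, your deferral is acceptable, but you should not leave the impression that mass conservation plus iteration suffice.

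A small presentational point: you state the positivity chain as $u\ge0\Rightarrow v\ge0\Rightarrow w\ge0$ and then derive $u\ge0$ from the maximum principle for $u$'s equation; logically the last step is independent of $w$'s sign (the equation for $u$ is linear in $u$ with drift $-\nabla w$ regardless of whether $w\ge0$), so the argument is sound, but it reads better in the order: $u\ge0$ by the parabolic maximum principle, then $v\ge0$ by the ODE, then $w\ge0$ by the inhomogeneous heat equation with nonnegative source.
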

Here, the basic idea of the proof of local existence of solutions is to make use of Banach's fixed point theorem (see \cite{QS2007, W1980}) and the Schauder estimate (see \cite{LSU1968}).

We next provide basic properties and useful inequalities to the solution of \eqref{p}.

\begin{proposition}\label{prop:3.1.0}
Let $(u,v,w)$ be a solution to \eqref{p} in $\Omega \times [0,\infty)$. Then the followings hold that:
\begin{enumerate}
\item[\rm{(i)}] It holds that for all $t \in (0,\infty)$, 
\begin{align}
\|u(t)\|_{L^1(\Omega)} = \|u_0\|_{L^1(\Omega)},\label{pro:3.1.1}
\end{align}
which is the mass conservation law.
\item[\rm{(ii)}] It holds that for all $t \in (0,\infty)$,
\begin{align}
\|v(t)\|_{L^1(\Omega)} = e^{-t} \|v_0\|_{L^1(\Omega)} + (1-e^{-t})\|u_0\|_{L^1(\Omega)}\label{pro:3.1.2}
\end{align}
and directly for all $t \in (0,\infty)$,
\begin{align*}
\|v(t)\|_{L^1(\Omega)} \leq \max\{\|u_0\|_{L^1(\Omega)}, \|v_0\|_{L^1(\Omega)}\}.
\end{align*}
\item[\rm{(iii)}] It holds that for all $t \in (0,\infty)$, 
\begin{align}
\|w(t)\|_{L^1(\Omega)} \leq \max\{\|u_0\|_{L^1(\Omega)}, \|v_0\|_{L^1(\Omega)}, \|w_0\|_{L^1(\Omega)}\}.\label{pro:3.1.4}
\end{align}
\end{enumerate}
\end{proposition}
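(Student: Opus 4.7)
The plan is to prove the three parts by integrating each evolution equation of \eqref{p} over $\Omega$ and exploiting the $0$-Neumann boundary conditions together with elementary ODE arguments. Each assertion reduces to a scalar ODE in $t$ for $\iO u(t)\,dx$, $\iO v(t)\,dx$, and $\iO w(t)\,dx$, so no PDE machinery beyond the divergence theorem is required.

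For (i), I would integrate $u_t = \Delta u - \nabla\cdot(u\nabla w)$ over $\Omega$. The divergence theorem converts the right-hand side into a boundary integral of $\partial u/\partial \nu - u\,\partial w/\partial \nu$, which vanishes by the boundary conditions in \eqref{p}. Hence $\frac{d}{dt}\iO u\,dx = 0$, and \eqref{pro:3.1.1} follows from the initial condition. For (ii), the second equation is a pointwise ODE, so integrating gives $\frac{d}{dt}\iO v\,dx = -\iO v\,dx + \iO u\,dx$, and by (i) the inhomogeneous term equals $\|u_0\|_{L^1(\Omega)}$ for all $t>0$. Solving this linear ODE with initial value $\|v_0\|_{L^1(\Omega)}$ via the integrating factor $e^{t}$ yields
\begin{align*}
\|v(t)\|_{L^1(\Omega)} = e^{-t}\|v_0\|_{L^1(\Omega)} + (1-e^{-t})\|u_0\|_{L^1(\Omega)},
\end{align*}
which is a convex combination of $\|u_0\|_{L^1(\Omega)}$ and $\|v_0\|_{L^1(\Omega)}$ and therefore bounded by their maximum.

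For (iii), I would integrate $w_t = \Delta w - w + v$ over $\Omega$; the Laplacian term drops out by the Neumann condition on $w$, leaving $\frac{d}{dt}\iO w\,dx = -\iO w\,dx + \iO v\,dx$. Writing $M := \max\{\|u_0\|_{L^1(\Omega)}, \|v_0\|_{L^1(\Omega)}, \|w_0\|_{L^1(\Omega)}\}$, the bound from (ii) gives $\iO v\,dx \le M$, so by the standard comparison principle for scalar linear ODEs, $\|w(t)\|_{L^1(\Omega)} \le e^{-t}\|w_0\|_{L^1(\Omega)} + (1-e^{-t})M \le M$, which is \eqref{pro:3.1.4}.

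The nonnegativity used implicitly in replacing $\int |u|$, $\int |v|$, $\int |w|$ by $\int u$, $\int v$, $\int w$ is guaranteed by Proposition \ref{pro:loc1}, which asserts positivity of the classical solution; this is the only nontrivial input. No step here presents a real obstacle, but the one deserving care is the ODE comparison in (iii): one must apply it to $\|w(t)\|_{L^1(\Omega)}$ rather than to $w$ pointwise, using that the forcing $\iO v\,dx$ is a bounded nonnegative function of $t$ thanks to the explicit formula \eqref{pro:3.1.2} established in (ii). With that ordering (i) $\Rightarrow$ (ii) $\Rightarrow$ (iii), the proof is complete.
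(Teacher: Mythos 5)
Your proof is correct and follows the standard argument one would expect for these elementary conservation and comparison statements; the paper itself states Proposition \ref{prop:3.1.0} without proof, treating it as a routine consequence of the structure of \eqref{p}. Integrating each equation over $\Omega$, using the Neumann boundary conditions to kill the divergence terms in (i), solving the resulting scalar linear ODE in (ii), and invoking the ODE comparison lemma in (iii) — with positivity from Proposition \ref{pro:loc1} to identify $\iO u$, $\iO v$, $\iO w$ with the corresponding $L^1$ norms — is exactly the intended reasoning, and your ordering (i) $\Rightarrow$ (ii) $\Rightarrow$ (iii) is the natural one.
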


The next proposition is a fairly straightforward of semigroup properties.
\begin{proposition}
Suppose that $(u, v, w)$ be a solution to \eqref{p} in $\Omega \times [0, \infty)$ with \eqref{c}.
Then it holds that:
\begin{enumerate}
\item[\rm{(a)}] Let $p \in (1,\infty)$, then there exists $C_1 > 0$ such that for all $t \in (0,\infty)$,
\begin{align}\label{pro:3.2.1}
\|w(t)\|_{L^p(\Omega)} \leq C_1(\max\{\|u_0\|_{L^1(\Omega)}, \|v_0\|_{L^1(\Omega)}\} + \|w_0\|_{L^p(\Omega)}).
\end{align}
\item[\rm{(b)}] Let $p \in (1,2)$, then there exists $C_2 > 0$ such that for all $t \in (0,\infty)$,
\begin{align}\label{pro:3.2.2}
\|\nabla w(t)\|_{L^p(\Omega)} \leq C_2(\max\{\|u_0\|_{L^1(\Omega)}, \|v_0\|_{L^1(\Omega)}\} + \|\nabla w_0\|_{L^p(\Omega)}).
\end{align}
\end{enumerate}
\end{proposition}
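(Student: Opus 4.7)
The plan is to reduce both estimates to standard smoothing properties of the Neumann heat semigroup $\{e^{t\Delta_N}\}_{t\ge 0}$ on the two-dimensional bounded domain $\Omega$, combined with the uniform-in-time $L^1$ bound on $v$ from Proposition \ref{prop:3.1.0}\,(ii). Writing the third equation of \eqref{p} as $w_t - \Delta_N w + w = v$ with initial datum $w_0$ and $0$-Neumann boundary condition, Duhamel's formula yields
\begin{equation*}
w(t) \;=\; e^{-t} e^{t\Delta_N} w_0 \;+\; \int_0^t e^{-(t-s)} e^{(t-s)\Delta_N} v(s)\, ds.
\end{equation*}
This representation is the common starting point for both (a) and (b).

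For part (a), I would bound the first term by contractivity: $\|e^{t\Delta_N} w_0\|_{L^p(\Omega)} \le \|w_0\|_{L^p(\Omega)}$. For the integral term, I would invoke the $L^1 \to L^p$ smoothing estimate for the Neumann heat semigroup in dimension two, namely
\begin{equation*}
\|e^{\sigma \Delta_N} z\|_{L^p(\Omega)} \le C\bigl(1 + \sigma^{-(1-1/p)}\bigr)\|z\|_{L^1(\Omega)},\qquad \sigma > 0,
\end{equation*}
apply it with $z = v(s)$ and $\sigma = t-s$, and then use $\|v(s)\|_{L^1(\Omega)} \le \max\{\|u_0\|_{L^1(\Omega)}, \|v_0\|_{L^1(\Omega)}\}$ from \eqref{pro:3.1.2}. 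The remaining time integral
\begin{equation*}
\int_0^t e^{-(t-s)}\bigl(1 + (t-s)^{-(1-1/p)}\bigr) ds \;\le\; \int_0^\infty e^{-\sigma}\bigl(1 + \sigma^{-(1-1/p)}\bigr) d\sigma
\end{equation*}
is finite because $1 - 1/p < 1$ when $p > 1$, yielding \eqref{pro:3.2.1} uniformly in $t$.

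For part (b), the first term is handled by the boundedness of $e^{t\Delta_N}$ on $W^{1,p}(\Omega)$, giving $\|\nabla e^{t\Delta_N} w_0\|_{L^p(\Omega)} \le C\|\nabla w_0\|_{L^p(\Omega)}$. For the integral term I would use the $L^1 \to W^{1,p}$ smoothing
\begin{equation*}
\|\nabla e^{\sigma \Delta_N} z\|_{L^p(\Omega)} \le C\bigl(1 + \sigma^{-\frac{1}{2} - (1-1/p)}\bigr)\|z\|_{L^1(\Omega)},
\end{equation*}
apply it with $z = v(s)$, and again insert the uniform $L^1$ bound for $v$. Here the critical point is that the singularity at $s = t$ has the exponent $\tfrac{1}{2} + 1 - \tfrac{1}{p}$, which is strictly less than $1$ precisely when $p < 2$; this is exactly the restriction in the statement. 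The exponential factor $e^{-(t-s)}$ again makes the time integral convergent uniformly in $t$, yielding \eqref{pro:3.2.2}.

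The main obstacle, such as it is, is really just locating and applying the correct smoothing estimates in the Neumann setting and keeping track of the exponent $\tfrac{n}{2}(\tfrac{1}{q} - \tfrac{1}{p})$ with $n = 2$ and $q = 1$; the restriction $p \in (1,2)$ in (b) is forced by the integrability threshold of $(t-s)^{-\alpha}$ against $e^{-(t-s)}$ near $s = t$. Everything else is standard once the Duhamel formula and Proposition \ref{prop:3.1.0}\,(ii) are in hand, so I would present both parts in parallel with a brief indication of where the semigroup estimates may be found (for instance in \cite{HD1981, LA2015} as cited earlier).
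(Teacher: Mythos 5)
Your proof is correct and takes exactly the approach the paper has in mind: the paper states only that the proposition ``is a fairly straightforward of semigroup properties'' and omits the details, and your argument — Duhamel's formula for the third equation, contractivity/boundedness of $e^{t\Delta_N}$ on $L^p$ and $W^{1,p}$ for the initial term, the $L^1\to L^p$ and $L^1\to W^{1,p}$ smoothing estimates applied with the uniform $L^1$ bound on $v$ from Proposition \ref{prop:3.1.0}\,(ii), and integrability of $e^{-\sigma}\sigma^{-\alpha}$ on $(0,\infty)$ — is precisely how one fills that in. Your observation that the exponent $\tfrac12 + 1 - \tfrac1p < 1$ forces $p<2$ in part (b) correctly identifies the source of that restriction.
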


In addition to the above proposition, we obtain a pointwise estimate for $w$ in the case $(u,v,w)$ is a radially symmetric solution of \eqref{p}.

\begin{proposition}[{\cite[Lemma 3.2]{WM2013}}]\label{pro:322}
Let $(u, v, w)$ be a radially symmetric solution to \eqref{p} in $\Omega \times [0, \infty)$ with \eqref{c}. Then, for $p \in (1,2)$, it holds that there exists a positive constant $C$ depending on $p$ such that for all $(x,t) \in \Omega \times (0,\infty)$,
\begin{align*}
w(x,t) \leq C(p) (\max\{\|u_0\|_{L^1(\Omega)}, \|v_0\|_{L^1(\Omega)}\} + \|w_0\|_{W^{1,p}(\Omega)})|x|^{-\frac{2-p}{p}}.
\end{align*}

\end{proposition}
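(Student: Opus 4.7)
The plan is to combine the radial structure of $w$ with the uniform-in-time $W^{1,p}$ bounds already in hand from Proposition 2.3. Writing $w(x,t) = W(|x|,t)$, parts (a) and (b) of Proposition 2.3 give, for any fixed $p \in (1,2)$,
\begin{align*}
\|w(t)\|_{L^p(\Omega)} + \|\nabla w(t)\|_{L^p(\Omega)} \leq C\bigl(\max\{\|u_0\|_{L^1(\Omega)}, \|v_0\|_{L^1(\Omega)}\} + \|w_0\|_{W^{1,p}(\Omega)}\bigr)
\end{align*}
uniformly in $t>0$. So it suffices to establish the purely static embedding that for every radial $w \in W^{1,p}(\Omega)$ with $p\in(1,2)$,
\begin{align*}
|W(r)| \leq C_p \bigl(\|w\|_{L^p(\Omega)} + \|\nabla w\|_{L^p(\Omega)}\bigr)\, r^{-(2-p)/p}, \qquad r \in (0,R],
\end{align*}
which is a version of the Strauss radial Sobolev embedding adapted to the disk.

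For this static inequality I would first pick a reference radius $r_0 \in [R/2,R]$ by a Chebyshev-type argument: since $\|w\|_{L^p(\Omega)}^p = 2\pi \int_0^R |W(s)|^p s\, ds \ge 2\pi \int_{R/2}^R |W(s)|^p s\, ds$, the mean value theorem for integrals produces $r_0 \in [R/2,R]$ with $|W(r_0)| \leq C_R\,\|w\|_{L^p(\Omega)}$. Then for $r<r_0$ I write $W(r) = W(r_0) - \int_r^{r_0} W'(s)\, ds$ and split the integrand as $|W'(s)|\,s^{1/p}\cdot s^{-1/p}$ before applying H\"older's inequality with exponents $p$ and $p/(p-1)$:
\begin{align*}
\int_r^{r_0} |W'(s)|\, ds \leq \Bigl(\int_r^{r_0} |W'(s)|^p s\, ds\Bigr)^{1/p} \Bigl(\int_r^{r_0} s^{-1/(p-1)}\, ds\Bigr)^{(p-1)/p}.
\end{align*}
The first factor is, up to a factor of $2\pi$, exactly $\|\nabla w\|_{L^p(\Omega)}$. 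Since $p<2$ forces $1/(p-1)>1$, the second integral is dominated by $C\, r^{(p-2)/(p-1)}$, and raising to the power $(p-1)/p$ turns this into $r^{(p-2)/p} = r^{-(2-p)/p}$, exactly the claimed singular factor.

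Combining $|W(r)| \leq |W(r_0)| + \bigl|\int_r^{r_0}W'(s)\,ds\bigr|$ with the bounds on the two terms, and then reinvoking Proposition 2.3 to translate the $W^{1,p}$-norms of $w(t)$ into the constants appearing in the statement, yields the proposition. The main delicate point is not deep but requires care: the exponent book-keeping between the H\"older split, the polar-coordinate weight $s\,ds$, and the required power $-(2-p)/p$ has essentially no slack, and one must verify that the divergent behaviour of $\int s^{-1/(p-1)}\,ds$ at $s=0$ produces precisely the claimed blow-up rate. The assumption $p\in(1,2)$ plays a double role here: it is what makes the integral $\int s^{-1/(p-1)}\,ds$ blow up at the origin, and it yields a strictly negative final exponent $-(2-p)/p$, reflecting the well-known failure of $W^{1,p}\hookrightarrow L^\infty$ for radial functions in dimension two below the critical exponent.
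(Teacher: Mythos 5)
Your proof is correct, and it is essentially the route the paper intends. The paper establishes this proposition purely by citation: it invokes the radial Sobolev (Strauss/Ni-type) estimate of {\cite[Lemma 3.2]{WM2013}} to reduce matters to the uniform-in-time $W^{1,p}$ bounds of the preceding proposition. You reproduce exactly that combination, but with a self-contained derivation of the radial embedding in place of the citation; the Chebyshev choice of $r_0$, the FTC representation $W(r)=W(r_0)-\int_r^{r_0}W'$, and the H\"older split against the polar weight $s^{1/p}$ are the standard ingredients, and the exponent bookkeeping $\left(s^{-1/(p-1)}\right)^{(p-1)/p}\leadsto r^{-(2-p)/p}$ checks out, with $p\in(1,2)$ ensuring both that the integral diverges near the origin and that the resulting exponent is negative. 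Two small points worth making explicit: first, your estimate yields $|W(r)|\le |W(r_0)|+C\|\nabla w\|_{L^p(\Omega)}\,r^{-(2-p)/p}$, where the $|W(r_0)|$ term does not carry the singular factor, but since $r\le R$ one has $1\le R^{(2-p)/p}r^{-(2-p)/p}$, so this term is absorbed at the cost of an $R$-dependent constant (the stated $C(p)$ also depends on $\Omega=B_R$); second, the regime $r\ge r_0$ must also be covered, but there the FTC integral runs over $[R/2,R]$ where $s^{-1/(p-1)}$ is bounded, so the same inequality holds trivially. Finally, the one-sided form $w\le C(\cdot)|x|^{-(2-p)/p}$ in the statement follows from the two-sided bound you prove because the solution is positive (Proposition~\ref{pro:loc1}).
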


We recall the following Lyapunov functional which was established in \cite[Lemma 3.2]{L2019}.
\begin{lemma}\label{lem:3.41}
Let $(u,v,w)$ be a solution to \eqref{p} in $\Omega \times [0, \infty)$. Then it holds that for all $t \in (0,\infty)$,
\begin{align}\label{eq:2.1}
\dfrac{d}{dt}\F(t) + \D(t) = 0,
\end{align}
where
\begin{align*}
\F(t) &= \iO u\log u dx - \iO uw dx + \dfrac{1}{2}\left(\|w_t(t)\|_{L^2(\Omega)}^2 + \|\nabla w(t)\|_{L^2(\Omega)}^2 + \|w(t)\|_{L^2(\Omega)}^2\right),\\
\D(t) &= \iO u|\nabla (\log u - w)|^2 dx + 2\|w_t(t)\|_{L^2(\Omega)}^2 + \|\nabla w_t(t)\|_{L^2(\Omega)}^2.
\end{align*}
\end{lemma}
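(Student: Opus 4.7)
I would split $\F(t)$ as $\F_1(t) := \iO u\log u\, dx - \iO uw\, dx$ plus $\F_2(t) := \tfrac12(\|w_t\|_{L^2(\Omega)}^2 + \|\nabla w\|_{L^2(\Omega)}^2 + \|w\|_{L^2(\Omega)}^2)$, differentiate each in time, and verify that the cross-terms cancel. All integrations by parts below use the Neumann conditions on $u$ and $w$ (and, after differentiating the third equation of \eqref{p} in $t$, on $w_t$).

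For $\F_1$, the identity $\tfrac{d}{dt}\iO u\log u\, dx = \iO(1+\log u) u_t\, dx$, combined with the first equation of \eqref{p} and two integrations by parts, produces $-\iO u|\nabla\log u|^2\, dx + \iO \nabla u\cdot\nabla w\, dx$. In the same way $\tfrac{d}{dt}\iO uw\, dx = \iO u_t w\, dx + \iO u w_t\, dx$, where the $u_t$-piece simplifies to $-\iO \nabla u\cdot\nabla w\, dx + \iO u|\nabla w|^2\, dx$. Subtracting and completing the square gives
\begin{align*}
\tfrac{d}{dt}\F_1(t) = -\iO u|\nabla(\log u - w)|^2\, dx - \iO u w_t\, dx.
\end{align*}

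For $\F_2$, differentiating $\tfrac12(\|w\|_{L^2(\Omega)}^2 + \|\nabla w\|_{L^2(\Omega)}^2)$ yields $\iO(w-\Delta w) w_t\, dx$, and substituting $w-\Delta w = v - w_t$ from the third equation of \eqref{p} turns this into $\iO v w_t\, dx - \|w_t\|_{L^2(\Omega)}^2$. To handle $\tfrac{d}{dt}\tfrac12\|w_t\|_{L^2(\Omega)}^2 = \iO w_t w_{tt}\, dx$, I would differentiate the third equation in $t$ and insert $v_t = -v + u$ from the second, obtaining $w_{tt} = \Delta w_t - w_t - v + u$; integration by parts against $w_t$ then gives $-\|\nabla w_t\|_{L^2(\Omega)}^2 - \|w_t\|_{L^2(\Omega)}^2 - \iO v w_t\, dx + \iO u w_t\, dx$. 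Summing the three pieces, the $\iO v w_t$ contributions cancel, and adding the result to $\tfrac{d}{dt}\F_1(t)$ cancels the $\pm\iO u w_t$ terms as well, leaving exactly $-\D(t)$ on the right. The one delicate point will be regularity: to use $w_{tt}$ and $\nabla w_t$ one needs more smoothness than the raw $C^{2,1}$ statement of Proposition \ref{pro:loc1}, but since $u$ solves a smooth-coefficient parabolic equation on $(0,\infty)$ and $v$ is obtained from $u$ by the explicit ODE $v_t = -v + u$, a standard interior parabolic bootstrap upgrades $w$ to $C^{3,2}(\overline{\Omega}\times(0,\infty))$, which is ample to justify every step of the calculation above.
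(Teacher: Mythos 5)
Your computation is correct; although the paper does not itself prove Lemma \ref{lem:3.41} (it cites \cite[Lemma 3.2]{L2019}), its proof of the localized analogue Lemma \ref{lem:3.4} rests on precisely the same algebraic identities — using the first equation of \eqref{p} to produce the $\iO u|\nabla(\log u - w)|^2$ term, substituting $u = v_t + v$ from the second equation, and expressing $v$ and $v_t$ via the third equation after integrating by parts — and specializing that argument to $\varphi\equiv 1$ reproduces your splitting of $\F$ into $\F_1$ and $\F_2$ step for step. Your closing remark on regularity is also apt: the paper's proof of Lemma \ref{lem:3.4} likewise uses $w_{tt}$ and $\nabla w_t$ implicitly and so relies on the same interior parabolic smoothing for $t>0$.
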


\section{Concentration around grow-up points}
In the case of the Keller--Segel system, it is well known that if we can obtain the uniform boundedness in time of $\iB u\log u dx$, the global existence and boundedness of solutions can be established by Alikakos--Moser's iteration scheme. This section is devoted to the study of the argument stated above in a localized area. Throughout the proof, $B_r(x_0)$ stands for the set $\{x \in \R^2 ; |x - x_0| < r \}$ for any $r > 0$ and $x_0 \in \R^2$.

\subsection{Fundamental inequalities and lemmas}\label{subsec:3.1}
We first prove that there exist grow-up points when considering grow-up solutions of \eqref{p}.
\begin{lemma}\label{lem:3.1111}
Let $(u,v,w)$ be a grow-up solution of \eqref{p} in $\Omega \times [0,\infty)$. Then, it holds that $\mathcal{G} \not= \emptyset$.
\end{lemma}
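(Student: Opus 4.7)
The plan is to argue by direct compactness, extracting a grow-up point as the limit of maximizers of $u(\cdot,t)$ along a diverging time sequence on which the $L^\infty$ norm blows up.

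First I would unpack the definition of a grow-up solution. Since $\limsup_{t\to\infty}\|u(t)\|_{L^\infty(\Omega)} = \infty$, there exists a sequence of times $t_k\to\infty$ such that $\|u(\cdot,t_k)\|_{L^\infty(\Omega)}\to\infty$ as $k\to\infty$. By Proposition \ref{pro:loc1}, for each fixed $t>0$ the function $u(\cdot,t)$ lies in $C(\overline{\Omega})$, so it is continuous on the compact set $\overline{\Omega}$ and therefore attains its supremum. Hence for each $k$ I can select $x_k\in\overline{\Omega}$ with
\begin{equation*}
u(x_k,t_k) \;=\; \|u(\cdot,t_k)\|_{L^\infty(\Omega)},
\end{equation*}
which in particular satisfies $u(x_k,t_k)\to\infty$.

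Next I would invoke compactness of $\overline{\Omega}\subset\R^2$ to extract a subsequence (still labelled $\{x_k\}$) with $x_k\to x_0$ for some $x_0\in\overline{\Omega}$. The corresponding time subsequence is still unbounded, and by construction $u(x_k,t_k)\to\infty$. Comparing with the definition of a grow-up point, this is precisely the statement that $x_0\in\mathcal{G}$, so $\mathcal{G}\neq\emptyset$.

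I do not expect any genuine obstacle here: the argument is pure compactness, relying only on the regularity $u\in C([0,\infty);C(\overline{\Omega}))$ guaranteed by Proposition \ref{pro:loc1} and on the compactness of $\overline{\Omega}$. The only point worth flagging is that one must use the continuity of $u(\cdot,t_k)$ on the closed domain (so that the $L^\infty$ norm is attained at an actual point $x_k\in\overline{\Omega}$, not merely approximated), after which the selection $x_k\to x_0$ is immediate from Bolzano--Weierstrass in $\R^2$.
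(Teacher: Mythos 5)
Your proposal is correct and follows essentially the same route as the paper: extract a diverging sequence of times from the $\limsup$, use the continuity of $u(\cdot,t)$ from Proposition \ref{pro:loc1} to obtain maximizers $x_k$, and apply Bolzano--Weierstrass in the compact set $\overline{\Omega}$ to pass to a limit point $x_0 \in \mathcal{G}$. No discrepancy to report.
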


\begin{proof}
The definition of $\limsup$ and a grow-up solution implies that there exists a subsequence of time $\{t_k\}$ with $t_k \to \infty$ as $k \to \infty$ such that 
\begin{align*}
\lim_{k \to \infty} \|u(t_k)\|_{L^\infty(\Omega)} = \infty.
\end{align*}
Moreover, due to Proposition \ref{pro:loc1}, we can take $\{x_k\} \subset \overline{\Omega}$ such that
\begin{align*}
u(x_k,t_k) = \|u(t_k)\|_{L^\infty(\Omega)}.
\end{align*}
Since we assume $\Omega$ is a bounded domain in $\R^2$, the Bolzano--Weierstrass theorem allows us to show that there exist a subsequence of $x_k$, 
which is denoted by $x_k$, and $x_0 \in \overline{\Omega}$ such that $x_k \to x_0$ as $k \to \infty$. This implies the existence of grow-up points.
\end{proof}

We introduce a cut-off function $\varphi$ to obtain localized estimates around the grow-up points. The following lemma agrees with the one given in \cite{SS2001} and \cite[Lemma 2.3]{FS2016}.
\begin{lemma}\label{lem:30}
Let $x_0 \in \overline{\Omega}, n \in \N$, and $r > 0$. Then there exists a function $\varphi = \varphi_{(x_0,r,n)} \in C^\infty_c(\R^2)$ satisfying
\begin{align*}
  &\varphi(x)=
  \begin{cases}
    1\quad \mathrm{in}\ B_r(x_0), \\
    0\quad \mathrm{in}\ \R^2 \setminus B_{2r}(x_0),
  \end{cases}\\
  &0 \leq \varphi \leq 1\ \mathrm{in}\ \R^2,\\
  &\dfrac{\partial \varphi}{\partial \nu} = 0\ \mathrm{on}\ \partial \Omega,\\
  &|\nabla \varphi| \leq A\varphi^{1-\frac{1}{n}},\ |\Delta \varphi| \leq B\varphi^{1-\frac{2}{n}}\ \mathrm{in}\ \R^2,
\end{align*}
where $A, B$ are positive constants determined by $n, r$.
\end{lemma}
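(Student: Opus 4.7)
The plan is to construct $\varphi$ explicitly by raising a standard smooth radial bump to a large enough power, so that the pointwise bounds on $\nabla \varphi$ and $\Delta \varphi$ can be expressed in terms of $\varphi$ itself with the required fractional exponents.

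First I would fix a smooth, non-increasing profile function $\chi : [0,\infty) \to [0,1]$ with $\chi \equiv 1$ on $[0,1]$ and $\chi \equiv 0$ on $[2,\infty)$, and set
\[
\psi(x) := \chi\!\left(\frac{|x - x_0|}{r}\right), \qquad x \in \R^2.
\]
Then $\psi \in C_c^\infty(\R^2)$ satisfies $\psi \equiv 1$ on $B_r(x_0)$, $\psi \equiv 0$ outside $B_{2r}(x_0)$, and admits uniform bounds $|\nabla \psi| \le c_1 / r$ and $|\Delta \psi| \le c_2 / r^2$, where $c_1, c_2$ depend only on $\chi$. Next I would set
\[
\varphi := \psi^{n}.
\]
Then $\varphi$ inherits the support and normalization properties listed in the statement. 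Computing derivatives yields
\[
\nabla \varphi = n \psi^{n-1} \nabla \psi,
\qquad
\Delta \varphi = n(n-1) \psi^{n-2} |\nabla \psi|^2 + n \psi^{n-1} \Delta \psi.
\]
Since $\varphi^{1 - 1/n} = \psi^{n-1}$ and $\varphi^{1 - 2/n} = \psi^{n-2}$, and $0 \le \psi \le 1$ (so higher powers of $\psi$ are bounded by lower ones), the bounds
\[
|\nabla \varphi| \le n c_1 r^{-1} \varphi^{1 - 1/n},
\qquad
|\Delta \varphi| \le \bigl( n(n-1) c_1^2 r^{-2} + n c_2 r^{-2} \bigr) \varphi^{1 - 2/n}
\]
follow at once, giving admissible constants $A = A(n,r)$ and $B = B(n,r)$.

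It remains to verify $\partial \varphi / \partial \nu = 0$ on $\partial \Omega$. This is the only potentially delicate point. When $x_0 \in \Omega$ and $r$ is small enough that $\overline{B_{2r}(x_0)} \subset \Omega$, $\varphi$ vanishes in a neighborhood of $\partial \Omega$, so the Neumann condition is trivial. When $\Omega = B_R$ and $x_0 = 0$ (which is the setting used in the sequel), $\psi$ is radial about the origin, so $\nabla \psi$ is radial and hence parallel to the outward normal on $\partial \Omega$; but for $r < R/2$, $\psi$ and $\nabla \psi$ both vanish on $\partial \Omega$, so again the condition is satisfied. For general $x_0 \in \overline{\Omega}$ with $B_{2r}(x_0) \cap \partial \Omega \neq \emptyset$, one can replace $\psi$ by its even reflection across $\partial \Omega$ using the local smoothness of the boundary; this preserves the polynomial-degenerate bounds and yields the Neumann condition by construction.

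The only real obstacle is this boundary case, but since the main applications of the lemma later in the paper involve either interior grow-up points with small $r$ or $x_0 = 0$ in the disk, the trivial case suffices and no reflection argument is needed in practice.
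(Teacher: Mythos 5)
Your construction---taking a standard smooth radial bump $\psi$ with $\psi\equiv 1$ on $B_r(x_0)$, $\psi\equiv 0$ outside $B_{2r}(x_0)$, and setting $\varphi=\psi^n$---is exactly the standard approach, and it is what the paper's cited sources (Senba--Suzuki and Fujie--Senba, Lemma~2.3) do. The computation $\nabla\varphi = n\psi^{n-1}\nabla\psi$ and $\Delta\varphi = n(n-1)\psi^{n-2}|\nabla\psi|^2 + n\psi^{n-1}\Delta\psi$ together with $\psi^{n-1}\le\psi^{n-2}$ (valid for $n\ge 2$, and $n$ is taken large throughout the paper) gives the required pointwise bounds with $A,B$ depending only on $n$, $r$, and the profile $\chi$. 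The paper does not give its own proof but simply cites references, so there is nothing to compare beyond this.

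One caveat worth flagging: you dismiss the case $B_{2r}(x_0)\cap\partial\Omega\neq\emptyset$ as not needed ``in practice,'' but in the proof of Theorem~\ref{th:2} the hypothetical grow-up point $x_0\neq 0$ could in principle lie on $\partial\Omega$, and the points $x_i$ on the circle $|x|=|x_0|$ would then also be boundary points; so the statement does need to cover $x_0\in\partial\Omega$. Your reflection idea is the right fix for $\Omega=B_R$: near a boundary point one can pre-compose $\psi$ with a local diffeomorphism that straightens $\partial\Omega$, extend by even reflection to enforce the Neumann condition, and pull back; the resulting function still satisfies $|\nabla\varphi|\lesssim\varphi^{1-1/n}$ and $|\Delta\varphi|\lesssim\varphi^{1-2/n}$ with constants absorbing the Jacobian bounds, and the support can be kept inside $B_{2r}(x_0)$ by shrinking the inner radius before reflection. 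This is routine but should not be waved away as unnecessary.
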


We often use the fundamental inequalities which were established in \cite{BHN1994, NSY1997} to derive $L^p$-estimates for a solution $(u,v,w)$ of \eqref{p}: let $\varepsilon > 0$, there is a positive constant $C(\varepsilon) > 0$ depending only on $\varepsilon$ and $\Omega$ such that
\begin{align*}
\|f\|_{L^3(\Omega)}^3 \leq \varepsilon \|f\|_{H^1(\Omega)}^2 \|f\log |f|\|_{L^1(\Omega)} + C(\varepsilon)\|f\|_{L^1(\Omega)}
\end{align*}
for any $f \in H^1(\Omega)$, where $\Omega$ is a bounded domain in a two-dimensional space.
The inequalities in the following lemma introduced by \cite{NSS2000} are a localized version of 
the inequality mentioned above. The proof is based on the Sobolev inequality in a bounded domain within a two-dimensional space:
\begin{align}
\|z\|_{L^2(\Omega)}^2 \leq K_{\mathrm{Sob}}^2 (\|\nabla z\|_{L^1(\Omega)}^2 + \|z\|_{L^1(\Omega)}^2)\label{sobine:1}
\end{align}
for any $z \in W^{1,1}(\Omega)$, where $K_{\mathrm{Sob}} > 0$ is a constant depending only on $\Omega$. 

\begin{lemma}[{\cite[Lemma 3.2]{NSS2000}}]\label{lem:3.3}
Let $x_0 \in \overline{\Omega}, 0 < r \ll 1$, and $\varphi = \varphi_{(x_0,r, n)}$ be as in Lemma \ref{lem:30}.
Then a positive function $z \in H^1(\Omega)$ satisfies the following inequalities:
\begin{enumerate}
\item[\rm{(i)}] It holds that 
\begin{align*}
\iB z^2 \varphi dx \leq 2K_{\mathrm{Sob}}^2 \int_{B_{2r}(x_0) \cap \Omega}z dx \iB \dfrac{|\nabla z|^2}{z}\varphi dx + K_{\mathrm{Sob}}^2 \left(\dfrac{A^2}{2} + 1\right)\|z\|_{L^1(B_{2r}(x_0) \cap \Omega)}^2.
\end{align*} 
\item[\rm{(ii)}] It holds that for any $s > 1$,
\begin{align*}
\iB z^2 \varphi dx &\leq \dfrac{4K_{\mathrm{Sob}}^2}{\log s} \int_{B_{2r}(x_0) \cap \Omega}\left(z\log z + \dfrac{1}{e}\right) dx \iB \dfrac{|\nabla z|^2}{z}\varphi dx\\
&\hspace{0.5cm} + C\|z\|_{L^1(B_{2r}(x_0)\cap \Omega)}^2 + 3s^2|\Omega|,
\end{align*}
where $C$ is a positive constant determined by $K_{\mathrm{Sob}}$ and $A$.
\item[\rm{(iii)}] It holds that for any $s > 1$,
\begin{align*}
\iB z^3 \varphi dx &\leq \dfrac{72K_{\mathrm{Sob}}^2}{\log s} \int_{B_{2r}(x_0) \cap \Omega}\left(z\log z + \dfrac{1}{e}\right) dx \iB |\nabla z|^2\varphi dx\\
&\hspace{0.5cm} + C\|z\|_{L^1(B_{2r}(x_0) \cap \Omega)}^3 + 10s^3|\Omega|,
\end{align*}
where $C$ is a positive constant determined by $K_{\mathrm{Sob}}$ and $A$.
\end{enumerate}
\end{lemma}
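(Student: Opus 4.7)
The strategy is to reduce all three inequalities to a single tool, the $L^1$-Sobolev inequality \eqref{sobine:1}, applied to an appropriate localized power of $z$. The weight behaviour $|\nabla\varphi|\le A\varphi^{1-1/n}$ is essential: taking $n=2$ gives $|\nabla\varphi|/\sqrt{\varphi}\le A$ pointwise, which is exactly what is needed for a chain-rule computation on $z^\alpha\sqrt{\varphi}$ to avoid division by $\sqrt{\varphi}$. Parts (ii) and (iii) are then refinements of (i) obtained by the truncation $z=(z-s)_+ + (z\wedge s)$, coupling the ``large'' part to the $L\log L$ term.

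\textbf{Step 1 (Part (i)).} I would apply \eqref{sobine:1} to the function $z\sqrt{\varphi}\in W^{1,1}(\Omega)$, giving
\[
\iB z^2\varphi\,dx \le K_{\mathrm{Sob}}^2\bigl(\|\nabla(z\sqrt{\varphi})\|_{L^1}^2 + \|z\sqrt{\varphi}\|_{L^1}^2\bigr).
\]
Expanding $\nabla(z\sqrt{\varphi}) = \sqrt{\varphi}\,\nabla z + z\frac{\nabla\varphi}{2\sqrt{\varphi}}$, using $|\nabla\varphi|/\sqrt{\varphi}\le A$, and applying Cauchy--Schwarz in the form
\[
\int \sqrt{\varphi}\,|\nabla z|\,dx = \int \Bigl(\sqrt{\varphi}\,\frac{|\nabla z|}{\sqrt{z}}\Bigr)\sqrt{z}\,dx \le \Bigl(\iB\tfrac{|\nabla z|^2}{z}\varphi\,dx\Bigr)^{1/2}\Bigl(\int_{B_{2r}(x_0)\cap\Omega} z\,dx\Bigr)^{1/2},
\]
together with $(a+b)^2\le 2a^2+2b^2$, produces exactly the claimed inequality with constants $2K_{\mathrm{Sob}}^2$ and $K_{\mathrm{Sob}}^2(A^2/2+1)$.

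\textbf{Step 2 (Part (ii)).} The key observation is that on $\{z>s\}$ one has $z\log z \ge z\log s$, and since $z\log z + 1/e\ge 0$ on $\{z\ge 0\}$, one gets
\[
\int_{\{z>s\}\cap B_{2r}(x_0)\cap\Omega} z\,dx \le \frac{1}{\log s}\int_{B_{2r}(x_0)\cap\Omega}\bigl(z\log z + \tfrac{1}{e}\bigr)dx.
\]
I would then split $z^2 \le 2(z-s)_+^2 + 2s^2$ (or $\le 2(z-s)_+^2 + 2(z\wedge s)^2$), bound the second contribution by $3s^2|\Omega|$ (absorbing a harmless constant), and apply the argument of Step 1 to $(z-s)_+$ in place of $z$, noting that $|\nabla(z-s)_+|=|\nabla z|\mathbf{1}_{\{z>s\}}$. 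The Cauchy--Schwarz step now produces $(\int_{\{z>s\}\cap B_{2r}}z)^{1/2}$, which by the display above introduces the factor $(\log s)^{-1/2}$; squaring yields the $\frac{4K_{\mathrm{Sob}}^2}{\log s}$ coefficient.

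\textbf{Step 3 (Part (iii)).} I would run the same scheme but apply \eqref{sobine:1} to $(z-s)_+^{3/2}\sqrt{\varphi}$, writing $z^3 \le 4(z-s)_+^3 + 4s^3\mathbf{1}_{B_{2r}}$ (absorbing the remainder into $10\,s^3|\Omega|$) so that $\iB z^3\varphi\,dx \le 4\iB(z-s)_+^3\varphi\,dx + 10\,s^3|\Omega|$. The chain rule gives $\nabla\bigl((z-s)_+^{3/2}\sqrt{\varphi}\bigr) = \tfrac{3}{2}(z-s)_+^{1/2}\sqrt{\varphi}\,\nabla z\,\mathbf{1}_{\{z>s\}} + (z-s)_+^{3/2}\tfrac{\nabla\varphi}{2\sqrt{\varphi}}$, and Cauchy--Schwarz on the first summand yields
\[
\tfrac{3}{2}\Bigl(\iB|\nabla z|^2\varphi\,dx\Bigr)^{1/2}\Bigl(\!\!\int_{\{z>s\}\cap B_{2r}\cap\Omega}\!\! z\,dx\Bigr)^{1/2},
\]
which, after squaring and combining with the $(\log s)^{-1}$ bound from Step 2, produces the factor $\tfrac{72K_{\mathrm{Sob}}^2}{\log s}\iB|\nabla z|^2\varphi\,dx\cdot\int_{B_{2r}}(z\log z + 1/e)$ (the $9/4$ from $(3/2)^2$, doubled by $(a+b)^2\le 2a^2+2b^2$, and multiplied by the outer $4$, giving $72$). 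The remaining $\|(z-s)_+^{3/2}\sqrt{\varphi}\|_{L^1}^2$ and the $\nabla\varphi$ contribution are controlled via Cauchy--Schwarz in the form $\int_{B_{2r}}z^{3/2}\le \bigl(\int_{B_{2r}}z\bigr)^{1/2}\bigl(\int_{B_{2r}}z^2\bigr)^{1/2}$, and then reducing $\int_{B_{2r}}z^2$ by part (ii) (or by absorbing back into the left-hand side and using Young's inequality); this produces the term $C\|z\|_{L^1(B_{2r}\cap\Omega)}^3$.

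\textbf{Main obstacle.} The substantive point in all three parts is the careful use of $|\nabla\varphi|/\sqrt{\varphi}\le A$, which depends on the cut-off construction in Lemma \ref{lem:30} with $n=2$; without this, the boundary term from the cut-off differentiation would not close up nicely in $L^1$. The most delicate bookkeeping is in part (iii): controlling the lower-order leftover $\int (z-s)_+^{3/2}\sqrt{\varphi}$ without reintroducing a term involving $\int|\nabla z|^2\varphi$ requires either an application of part (ii) or a Young's inequality absorption, and it is here that the explicit constants $72$ and $10\,s^3|\Omega|$ arise rather than sharper values.
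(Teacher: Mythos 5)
Your reduction to the $W^{1,1}\hookrightarrow L^2$ Sobolev inequality \eqref{sobine:1}, applied to $z\sqrt\varphi$ in part (i) and to $(z-s)_+\sqrt\varphi$ in part (ii), together with the truncation estimate $\int_{\{z>s\}\cap B_{2r}(x_0)\cap\Omega}z\,dx\le\frac{1}{\log s}\int_{B_{2r}(x_0)\cap\Omega}(z\log z+\frac1e)\,dx$ and the pointwise bound $|\nabla\varphi|/(2\sqrt\varphi)\le A/2$, is the expected route and yields (i) and (ii) correctly (with slightly sharper constants than asserted; note also that any $n\ge2$, not only $n=2$, gives $\varphi^{1-1/n}\le\varphi^{1/2}$). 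The chain‐rule and Cauchy--Schwarz bookkeeping in those two parts is sound.

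Part (iii), however, has a genuine gap. Applying \eqref{sobine:1} to $(z-s)_+^{3/2}\sqrt\varphi$ correctly produces the main term with coefficient $18K_{\mathrm{Sob}}^2/\log s$ (your count to $72$ is off by a factor of $4$, harmlessly so), but the $\|(z-s)_+^{3/2}\sqrt\varphi\|_{L^1}^2$ piece and the $\nabla\varphi$ contribution both reduce to
\[
\Bigl(\int_{B_{2r}(x_0)\cap\Omega}(z-s)_+^{3/2}\,dx\Bigr)^2,
\]
and this quantity cannot, in general, be dominated by $C\|z\|_{L^1(B_{2r}(x_0)\cap\Omega)}^3$: for $z$ concentrating on a shrinking ball at fixed mass, $\bigl(\int(z-s)_+^{3/2}\bigr)^2$ grows linearly in the peak of $z$, while $\|z\|_{L^1}^3$ stays fixed. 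Neither of the two fixes you propose closes the estimate. Reducing $\int_{B_{2r}}z^2$ by part (ii) requires a cut-off $\varphi'$ supported in a \emph{larger} ball and reintroduces $\iB\frac{|\nabla z|^2}{z}\varphi'\,dx$, which is neither $\iB|\nabla z|^2\varphi\,dx$ (the two dissipations are not comparable without a pointwise bound on $z$) nor localised to the correct annulus. Absorbing into the left-hand side via Young fails because $\iB z^3\varphi\,dx$ only controls $\int_{B_r(x_0)\cap\Omega}z^3\,dx$: on the annulus $B_{2r}\setminus B_r$, which is exactly where $\nabla\varphi$ is supported and where the problematic mass lives, $\varphi$ can be arbitrarily small, so there is no lower bound to absorb against. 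Some mechanism for controlling the annular contribution in terms of the quantities actually appearing in the statement is missing, and the concluding claim that this produces $C\|z\|_{L^1(B_{2r}(x_0)\cap\Omega)}^3$ is not justified by the steps you have given.
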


\subsection{$L\log L$–bound around grow–up points}\label{subsec:3.2}
We next will introduce a localized Lyapunov functional in Lemma \ref{lem:3.4}.
\begin{lemma}\label{lem:3.4}
Let $x_0 \in \overline{\Omega}$ and $\varphi = \varphi_{(x_0,r,n)}$ be as in Lemma \ref{lem:30}. Then it holds that for any $t \in (0,\infty)$,
\begin{align}
\dfrac{d}{dt}\F_{\varphi}(t) + \D_{\varphi}(t) = \dfrac{d}{dt}\int_{\Omega} u\varphi dx + \Re(u,w,\varphi),\label{eq:4.0.1}
\end{align}
where
\begin{align*}
\F_{\varphi}(t) &:= \iB (u\log u - uw)\varphi dx + \dfrac{1}{2}\iB (|w_t|^2 + |\nabla w|^2 +  w^2)\varphi dx,\\
\D_{\varphi}(t) &:= \iB u |\nabla (\log u - w)|^2\varphi dx + \iB (2|w_t|^2 + |\nabla w_t|^2) \varphi dx,\\
\Re(u,w,\varphi) &:= \iB (u\log u)\Delta \varphi dx + \dfrac{1}{2}\iB |w_t|^2 \Delta \varphi dx\\ 
&\hspace{0.5cm}+ \iB \{ (1+w)\nabla u + (u\log u -uw -w_t)\nabla w \} \cdot \nabla \varphi dx.
\end{align*}
Moreover, we call $\F_\varphi$ the localized Lyapunov functional.
\end{lemma}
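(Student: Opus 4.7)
The plan is to mimic the derivation of the global identity $\tfrac{d}{dt}\F + \D = 0$ in Lemma \ref{lem:3.41}, but to insert $\varphi$ as a test weight and then carefully track every integration-by-parts remainder generated by $\varphi$. Concretely, I will differentiate each summand of $\F_\varphi$ in time; whenever I move a differential operator across $\varphi$ I will pick up a correction proportional to $\nabla\varphi$ or $\Delta\varphi$. All boundary contributions on $\partial\Omega$ vanish by the combination of $\partial_\nu u = \partial_\nu w = 0$ from \eqref{p} and $\partial_\nu \varphi = 0$ from Lemma \ref{lem:30}, so every IBP is clean.

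\textbf{Step 1: entropy and coupling pieces.} First I compute
\begin{align*}
\frac{d}{dt}\int_\Omega u\log u\,\varphi\,dx
= \int_\Omega (\log u + 1)\bigl[\Delta u - \nabla\cdot(u\nabla w)\bigr]\varphi\,dx,
\end{align*}
and integrate by parts twice. One instance of IBP on the diffusion term produces $-\int u^{-1}|\nabla u|^2 \varphi\,dx$ together with the cutoff leftover $-\int \nabla(u\log u)\cdot\nabla\varphi\,dx$, which I then rewrite as $\int u\log u\,\Delta\varphi\,dx$ via a further IBP. The chemotactic term yields $\int \nabla u\cdot\nabla w\,\varphi\,dx + \int u\log u\,\nabla w\cdot\nabla\varphi\,dx + \int u\,\nabla w\cdot\nabla\varphi\,dx$. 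The same procedure applied to $-\tfrac{d}{dt}\int u w\,\varphi\,dx = -\int u_t w\,\varphi\,dx - \int u\,w_t\,\varphi\,dx$ contributes $\int \nabla u\cdot\nabla w\,\varphi\,dx - \int u|\nabla w|^2\,\varphi\,dx$, plus the cutoff remainders $\int w\,\nabla u\cdot\nabla\varphi\,dx - \int uw\,\nabla w\cdot\nabla\varphi\,dx$, and leaves the term $-\int u w_t\,\varphi\,dx$ to be treated next.

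\textbf{Step 2: handling $\int u w_t\,\varphi\,dx$.} This is the main structural step and the place where the ODE for $v$ is essential. Using $v = w_t - \Delta w + w$ from the third equation of \eqref{p} and $v_t = -v + u$ from the second, I get the identity
\begin{align*}
u = w_{tt} - \Delta w_t + 2 w_t - \Delta w + w.
\end{align*}
Substituting this into $-\int u w_t\,\varphi\,dx$ and integrating by parts on the two Laplacian terms (picking up $\tfrac{1}{2}\int w_t^2\,\Delta\varphi\,dx$ from $\int w_t\,\Delta w_t\,\varphi\,dx$ and $-\int w_t\,\nabla w\cdot\nabla\varphi\,dx$ from $\int w_t\,\Delta w\,\varphi\,dx$), I produce precisely
\begin{align*}
-\tfrac{1}{2}\tfrac{d}{dt}\!\int\!(w_t^2+|\nabla w|^2+w^2)\varphi\,dx - \int\!(2w_t^2+|\nabla w_t|^2)\varphi\,dx
+ \tfrac{1}{2}\!\int w_t^2\,\Delta\varphi\,dx - \int w_t\,\nabla w\cdot\nabla\varphi\,dx.
\end{align*}
The first group cancels against $\tfrac{d}{dt}$ of the quadratic $w$-terms in $\F_\varphi$, and the second group is precisely $-\D_\varphi$ plus the $u|\nabla(\log u - w)|^2\,\varphi$ contribution built from the squares $\tfrac{|\nabla u|^2}{u}\varphi - 2\nabla u\cdot\nabla w\,\varphi + u|\nabla w|^2\,\varphi$ assembled in Step 1.

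\textbf{Step 3: assembly.} After these cancellations only cutoff remainders survive on the right-hand side, namely
\begin{align*}
\int u\log u\,\Delta\varphi\,dx + \tfrac{1}{2}\!\int w_t^2\,\Delta\varphi\,dx + \int\!\bigl[w\,\nabla u + u\log u\,\nabla w + u\,\nabla w - u w\,\nabla w - w_t\,\nabla w\bigr]\!\cdot\nabla\varphi\,dx.
\end{align*}
Comparing with the definition of $\Re(u,w,\varphi)$ in the statement, the leftover mismatch equals $\int u\,\nabla w\cdot\nabla\varphi\,dx - \int \nabla u\cdot\nabla\varphi\,dx$; but testing the first equation of \eqref{p} against $\varphi$ gives
\begin{align*}
\frac{d}{dt}\int_\Omega u\,\varphi\,dx = \int_\Omega \bigl[\Delta u - \nabla\cdot(u\nabla w)\bigr]\varphi\,dx = -\int_\Omega \nabla u\cdot\nabla\varphi\,dx + \int_\Omega u\,\nabla w\cdot\nabla\varphi\,dx,
\end{align*}
which is exactly this mismatch. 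Collecting everything yields \eqref{eq:4.0.1}. The only real obstacle is purely notational: keeping the many IBP remainders organized and verifying that the $\Re$ terms collected from the three different pieces of $\F_\varphi$ assemble into the precise closed form stated, with no hidden boundary contribution (which is guaranteed by $\partial_\nu u = \partial_\nu w = \partial_\nu\varphi = 0$ on $\partial\Omega$).
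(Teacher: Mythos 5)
Your argument is correct, and it is in substance the same computation the paper performs: differentiate $\F_\varphi$ in time, substitute the equations of \eqref{p}, integrate by parts against $\varphi$, and observe that the cutoff remainders collect into $\Re$ plus the flux identity $\tfrac{d}{dt}\int u\varphi\,dx = \int(u\nabla w - \nabla u)\cdot\nabla\varphi\,dx$. The only cosmetic differences are that the paper organizes Step 1 around the single quantity $\int u_t(\log u - w)\varphi\,dx$ rather than treating the entropy and coupling pieces separately, and in Step 2 it substitutes $u = v_t + v$ and then $v = w_t - \Delta w + w$ in two stages rather than your one-shot elimination $u = w_{tt} - \Delta w_t + 2w_t - \Delta w + w$; both produce the identical cancellations and the identical remainder $\Re$.
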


\begin{proof}
We can perform the calculation in the following way using the first equation of \eqref{p} and integrating by parts:
\begin{align*}
&\iB u_t (\log u - w) \varphi dx\\
&= \iB \nabla \cdot (\nabla u - u \nabla w)(\log u -w) \varphi dx\\
&= - \iB u |\nabla (\log u - w)|^2 \varphi dx - \iB (\log u - w)(\nabla u - u\nabla w )\cdot \nabla \varphi dx.
\end{align*}
At the same time, we have
\begin{align*}
\iB u_t (\log u - w) \varphi dx = \dfrac{d}{dt}\iB (u\log u - uw)\varphi dx - \dfrac{d}{dt}\iB u\varphi dx + \iB uw_t \varphi dx.
\end{align*}
Observing $\iB uw_t\varphi dx$, from the second equation of \eqref{p} and by integrating by parts, we get
\begin{align*}
\iB u w_t \varphi dx &= \iB (v_t + v)w_t \varphi dx\\
&= \iB v_t w_t \varphi dx + \iB v w_t \varphi dx\\
&= \dfrac{1}{2}\dfrac{d}{dt}\iB |w_t|^2 \varphi dx + \iB |\nabla w_t|^2 \varphi dx + \iB w_t \nabla w_t \cdot \nabla \varphi dx + \iB |w_t|^2 \varphi dx\\
&\hspace{0.5cm}+ \iB |w_t|^2 \varphi dx + \dfrac{1}{2}\dfrac{d}{dt}\iB |\nabla w |^2 \varphi dx + \iB w_t \nabla w \cdot \nabla \varphi dx + \dfrac{1}{2}\dfrac{d}{dt}\iB w^2 \varphi dx\\
&= \dfrac{1}{2}\dfrac{d}{dt}\left(\iB |w_t|^2 \varphi dx + |\nabla w|^2\varphi dx + w^2 \varphi dx\right) + 2\iB |w_t|^2 \varphi dx\\
&\hspace{0.5cm}+ \iB |\nabla w_t|^2 \varphi dx - \dfrac{1}{2}\iB |w_t|^2 \Delta \varphi dx + \iB w_t \nabla w \cdot \nabla \varphi dx.
\end{align*}
Combining these gives \eqref{eq:4.0.1} and completes the proof. 
\end{proof}

The following lemma is intended to show Proposition \ref{prop:3.11}. It is worth pointing out that we succeed in obtaining the quasi-dissipative estimates of $\iB u \log u \varphi dx$ and $\iB |\nabla w|^2 \varphi dx$, which is different from the calculation in \cite{NSS2000}.

\begin{lemma}\label{lem:3.5}
Let $x_0 \in \overline{\Omega}, 0 < r \ll 1$, and $n$ be sufficiently large, and more let be $\varphi = \varphi_{(x_0,r,n)}$ as in Lemma \ref{lem:30}. Then for all $\varepsilon > 0$ there exists $C(\varepsilon) > 0$ such that 
\begin{align}
\dfrac{d}{dt}\left(E(t) + \dfrac{1}{2}\iB v^2 \varphi dx \right) &+ E(t) + \iB v^2 \varphi dx + \dfrac{3}{4}\iB \dfrac{|\nabla u|^2}{u}\varphi dx\notag\\
& + \dfrac{1}{16\varepsilon}\iB  |w_t|^2 \varphi dx + \dfrac{1}{4\varepsilon}\iB 
|\Delta w|^2 \varphi dx\notag\\
& \leq 10\varepsilon \iB u^2 \varphi dx + \dfrac{5}{4\varepsilon}\iB v^2 \varphi dx + C(\varepsilon),\quad t \in (0,\infty),\label{eq:3.4.1}
\end{align}
where
\begin{align*}
E(t) = \iB (u\log u)\varphi dx + \dfrac{1}{2\varepsilon}\iB |\nabla w|^2 \varphi dx + \dfrac{1}{4\varepsilon}\iB w^2 \varphi dx.
\end{align*}
\end{lemma}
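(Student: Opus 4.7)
The strategy is to differentiate each ingredient of $E(t) + \tfrac{1}{2}\iB v^2\varphi\,dx$ separately, combine with the prescribed weights, and then absorb the resulting cross terms via Young's inequality. For $\iB u\log u\,\varphi\,dx$, inserting $u_t = \Delta u - \nabla\cdot(u\nabla w)$ into $\iB(\log u+1)u_t\varphi\,dx$ and integrating by parts twice (using $\partial_\nu u = \partial_\nu w = \partial_\nu \varphi = 0$ together with $\nabla(u\log u) = (\log u + 1)\nabla u$) yields
\begin{equation*}
\tfrac{d}{dt}\iB u\log u\,\varphi\,dx = -\iB \tfrac{|\nabla u|^2}{u}\varphi\,dx + \iB u\log u\,\Delta\varphi\,dx + \iB \nabla u\cdot\nabla w\,\varphi\,dx + \iB (u\log u + u)\nabla w\cdot\nabla\varphi\,dx.
\end{equation*}
For $\tfrac{d}{dt}\iB|\nabla w|^2\varphi\,dx$ I would start from $\iB\nabla w\cdot\nabla w_t\,\varphi\,dx = -\iB w_t(\varphi\Delta w + \nabla w\cdot\nabla\varphi)\,dx$ and then replace $w_t\Delta w$ by $(\Delta w)^2 - w\Delta w + v\Delta w$ via the third equation of \eqref{p}; combined with $\iB w\Delta w\,\varphi\,dx = -\iB|\nabla w|^2\varphi\,dx + \tfrac{1}{2}\iB w^2\Delta\varphi\,dx$ this produces the essential $-\iB|\Delta w|^2\varphi\,dx$ dissipation. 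The derivatives $\tfrac{d}{dt}\iB w^2\varphi\,dx$ and $\tfrac{d}{dt}\iB v^2\varphi\,dx$ follow directly from the $w$- and $v$-equations. Summing with weights $1,\tfrac{1}{2\varepsilon},\tfrac{1}{4\varepsilon},\tfrac{1}{2}$ and adding $E + \iB v^2\varphi\,dx + \tfrac{3}{4}\iB \tfrac{|\nabla u|^2}{u}\varphi\,dx + \tfrac{1}{16\varepsilon}\iB|w_t|^2\varphi\,dx + \tfrac{1}{4\varepsilon}\iB|\Delta w|^2\varphi\,dx$ to both sides realises the desired left-hand side.

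The heart of the argument is then the absorption of the remaining cross terms on the right. One uses the pointwise bound $s\log s \leq \varepsilon s^2 + C_\varepsilon$ to write $\iB u\log u\,\varphi\,dx \leq \varepsilon\iB u^2\varphi\,dx + C(\varepsilon)$; integration by parts gives $\iB\nabla u\cdot\nabla w\,\varphi\,dx = -\iB u\Delta w\,\varphi\,dx - \iB u\nabla w\cdot\nabla\varphi\,dx$, whose first summand is controlled by $\varepsilon\iB u^2\varphi\,dx + \tfrac{1}{4\varepsilon}\iB|\Delta w|^2\varphi\,dx$; Young's similarly handles $\iB uv\,\varphi\,dx$ and $\tfrac{1}{2\varepsilon}\iB vw\,\varphi\,dx$, the latter absorbing the $-\tfrac{1}{4\varepsilon}\iB w^2\varphi\,dx$ budget; and $\tfrac{1}{\varepsilon}|\iB v\Delta w\,\varphi\,dx|$ is split with weights tuned so that a strictly negative coefficient of $\iB|\Delta w|^2\varphi\,dx$ survives. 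That residual $-c\iB|\Delta w|^2\varphi\,dx$ is converted into the required $-\tfrac{1}{16\varepsilon}\iB|w_t|^2\varphi\,dx$ dissipation—cancelling the positive $w_t^2$ piece produced when $\tfrac{1}{16\varepsilon}\iB|w_t|^2\varphi\,dx$ was shifted to the left—via the elementary inequality
\begin{equation*}
|\Delta w|^2 = (w_t + w - v)^2 \geq (1-\delta)|w_t|^2 - (\delta^{-1} - 1)(w - v)^2,
\end{equation*}
whose $(w - v)^2$ remainder is further split into harmless $w^2$ and $v^2$ contributions by Young's.

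The terms containing $\nabla\varphi$ or $\Delta\varphi$, namely $\iB u\log u\,\Delta\varphi\,dx$, $\iB(u\log u + u)\nabla w\cdot\nabla\varphi\,dx$, $\iB u\nabla w\cdot\nabla\varphi\,dx$, $\iB w^2\Delta\varphi\,dx$, and $\iB w_t\nabla w\cdot\nabla\varphi\,dx$, are all supported in the annulus $\{r < |x - x_0| < 2r\}$ where Lemma \ref{lem:30} gives $|\nabla\varphi| \leq A\varphi^{1 - 1/n}$ and $|\Delta\varphi| \leq B\varphi^{1 - 2/n}$ with $n$ chosen large. They are controlled by Young's and Cauchy--Schwarz together with the global bounds $\|u\|_{L^1(\Omega)} = \|u_0\|_{L^1(\Omega)}$ of Proposition \ref{prop:3.1.0}, the uniform $L^p$ bounds on $w$ and $\nabla w$ for $p \in (1,2)$, and the pointwise bound on $w$ of Proposition \ref{pro:322}; the most delicate boundary term $\iB u\log u\,\Delta\varphi\,dx$ is treated via its IBP representation $-\iB(\log u + 1)\nabla u\cdot\nabla\varphi\,dx$, bounded by Cauchy--Schwarz in terms of $\iB\tfrac{|\nabla u|^2}{u}\varphi\,dx$, which absorbs a small fraction of the dissipation and explains the coefficient $\tfrac{3}{4}$ rather than $1$ in front of $\iB\tfrac{|\nabla u|^2}{u}\varphi\,dx$ on the left. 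The main obstacle is precisely this tight bookkeeping: the target constants $10\varepsilon$ and $\tfrac{5}{4\varepsilon}$ are narrow enough that the Young's weights in the $v\Delta w$ split, the $|\Delta w|^2\to|w_t|^2$ conversion, and the boundary-term absorption must all be chosen consistently in order to meet the inequality simultaneously in every coefficient.
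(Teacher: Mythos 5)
The overall skeleton of your plan (differentiate each term, integrate by parts, Young's against the dissipation, control $\nabla\varphi$- and $\Delta\varphi$-terms on the annulus) is the same as the paper's, but the crucial boundary term $\iB w_t\nabla w\cdot\nabla\varphi\,dx$ is not handled and cannot be handled the way you suggest. You list it among the $\nabla\varphi$-terms and claim they are all ``controlled by Young's and Cauchy--Schwarz together with \dots the uniform $L^p$ bounds on $w$ and $\nabla w$ for $p\in(1,2)$, and the pointwise bound on $w$.'' After Young, however, $\iB w_t\nabla w\cdot\nabla\varphi\,dx$ produces a term of the form $\iB|\nabla w|^2\varphi^{1-2/n}\,dx$, and the uniform $\nabla w$ estimate \eqref{pro:3.2.2} holds \emph{only} for $p\in(1,2)$; there is no time-independent $L^2$ bound on $\nabla w$ available at this stage. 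This is exactly why the paper, in the chain $III\to IV+V$, integrates $\iB|\nabla w|^2\varphi^{1-2/n}\,dx$ by parts again to transfer the derivatives onto $\varphi$ and onto $\Delta w=w_t+w-v$, so that only $\|w\|_{L^p}$, $\|w_t\|_{L^2(\varphi)}$, and $\|v\|_{L^2(\varphi)}$ (all controllable) appear. That secondary integration by parts is the non-routine idea that your sketch omits, and without it your argument has a genuine gap.

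A second, more quantitative concern: the paper never forms a cross term $\iB u\Delta w\,\varphi\,dx$. Instead it substitutes $\Delta w=w_t+w-v$ and drops $-\iB uw\,\varphi\,dx\le 0$ by positivity, so the $u$-cross-term costs only $|w_t|^2$ budget (from the separate $w_t\varphi$-test) and none of the $|\Delta w|^2$ budget. You instead propose to control $-\iB u\Delta w\,\varphi\,dx$ by $\varepsilon\iB u^2\varphi\,dx+\tfrac{1}{4\varepsilon}\iB|\Delta w|^2\varphi\,dx$, and separately to extract the $|w_t|^2$ dissipation from $|\Delta w|^2$ via the algebraic inequality $(w_t+w-v)^2\ge(1-\delta)|w_t|^2-(\delta^{-1}-1)(w-v)^2$ rather than by testing with $w_t\varphi$ as the paper does. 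These two choices draw on the \emph{same} $|\Delta w|^2$ reserve — together with the $v\Delta w$ Young and the $\tfrac{1}{4\varepsilon}\iB|\Delta w|^2\varphi$ that must survive on the left — and the $(w-v)^2$ remainder adds yet another $v^2$ contribution. It is far from clear that this all fits inside $10\varepsilon\iB u^2\varphi\,dx+\tfrac{5}{4\varepsilon}\iB v^2\varphi\,dx$; the paper's route, which spends $|\Delta w|^2$-budget only on $v\Delta w$ and obtains $|w_t|^2$-dissipation independently from the $w_t\varphi$-test, is structurally tighter and avoids this collision. You would need to carry out the arithmetic explicitly to see whether your allocation can meet the stated constants; as written it reads as if it cannot.

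Finally, a minor point: you invoke the pointwise decay bound of Proposition~\ref{pro:322}, which requires radial symmetry, whereas the paper's proof of Lemma~\ref{lem:3.5} only uses the $L^p$ bound \eqref{pro:3.2.1} and is valid without that assumption.
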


\begin{proof}
The key idea of the proof is to impose differentiation on $\varphi$ by integrating by parts. 
Let us examine the term $\iB u\log u\varphi dx$. By using the first equation of \eqref{p} and integrating by parts, it holds that 
\begin{align}
\dfrac{d}{dt}\left(\iB u\log u \varphi dx\right) &= \iB u_t \log u \varphi dx + \iB u_t \varphi\notag\\
&=-\iB \dfrac{|\nabla u|^2}{u}\varphi dx - \iB \log u \nabla u \cdot \nabla \varphi dx + \iB \nabla w \cdot \nabla u \varphi dx\notag\\
&\hspace{0.5cm}+\iB u \log u \nabla w \cdot \nabla \varphi dx - \iB \nabla u \cdot \nabla \varphi dx + \iB u \nabla w \cdot \nabla \varphi dx\notag\\
&= -\iB \dfrac{|\nabla u|^2}{u}\varphi dx + \iB u(\log u +1) \nabla w \cdot \nabla \varphi dx\notag\\
&\hspace{0.5cm}+ \iB (\nabla w \cdot \nabla u) \varphi dx - \iB (1+\log u)\nabla u \cdot \nabla \varphi dx.\label{eq:3.4.2}
\end{align}
Moreover, Lemma \ref{lem:30} and the inequality:
\begin{align*}
|u\log u| \leq u\log u + \frac{2}{e}
\end{align*}
lead to that 
\begin{align}
-\iB (1 + \log u )\nabla u \cdot \nabla \varphi dx &= \iB u \Delta \varphi dx + \iB u\log u \Delta \varphi dx + \iB \nabla u \cdot \nabla \varphi dx\notag\\
&= \iB u\log u \Delta \varphi dx\notag\\
&\leq B \iB |u \log u | \varphi^{1-\frac{2}{n}} dx\notag\\
&\leq B \iB u\log u \varphi^{1-\frac{2}{n}} dx + 2B \iB \dfrac{1}{e}\varphi^{1-\frac{2}{n}} dx.\notag
\end{align}
Now, we can find a positive constant $C$ satisfying
\begin{align}
B\left(u \log u + \dfrac{2}{e}\right) \leq u^{1 + \frac{1}{n}} + C.\label{eq:3.4.2.1}
\end{align}
Hence, it holds that
\begin{align}
-\iB (1 + \log u )\nabla u \cdot \nabla \varphi dx &\leq \iB u ^{1 + \frac{1}{n}} \varphi^{1 - \frac{2}{n}} dx +  C|\Omega|\notag.
\end{align}
We remark that there exists $ n_0 \in \N$ such that
\begin{align*}
\left(1 - \dfrac{2}{n}\right)\cdot \left( \dfrac{2n}{n+1}\right) = \dfrac{2(n-2)}{n + 1} \ge 1 \quad \mathrm{for\ all}\ n \ge n_0,
\end{align*}
and $\varphi^\alpha \leq \varphi$ for $\alpha \ge 1$ due to the properties of $\varphi$. Here, we may choose sufficiently large $ n \ge n_0$. 
Using the Young inequality and the H\"{o}lder inequality, we have for any $\varepsilon > 0$
\begin{align}
-\iB (1 + \log u )\nabla u \cdot \nabla \varphi dx &\leq \left(\iB u^2 \varphi dx\right)^\frac{n+1}{2n} |\Omega|^\frac{n-1}{2n} + C\notag\\
&\leq \varepsilon \iB u^2 \varphi dx + C(\varepsilon).\label{eq:3.4.4}
\end{align}
As to the rest of the term on the right-hand side of \eqref{eq:3.4.2}, by substituting 
the third equation of \eqref{p} and integrating by parts we obtain
\begin{align*}
&\iB (\nabla w \cdot \nabla u) \varphi dx + \iB u(\log u + 1) \nabla w \cdot \nabla \varphi dx\\
&= -\iB u \nabla w \cdot \nabla \varphi dx - \iB u (\Delta w) \varphi dx + \iB u(\log u + 1) \nabla w \cdot \nabla \varphi dx\\
&=- \iB u (\Delta w) \varphi dx + \iB u \log u \nabla w \cdot \nabla \varphi dx\\
&=- \iB u (w_t + w - v)\varphi dx + \iB u\log u \nabla w \cdot \nabla \varphi dx\\
&\leq - \iB u w_t dx + \iB u v \varphi dx + \iB u\log u \nabla w \cdot \nabla \varphi dx\\
&= - \iB u w_t dx + \iB u v \varphi dx -\iB (u\log u ) w\Delta \varphi dx - \iB (1+\log u)w\nabla u \cdot \nabla \varphi dx.
\end{align*}
Similar to the discussion \eqref{eq:3.4.2.1} and owing to Lemma \ref{lem:30}, we get
\begin{align}
&\iB (\nabla w \cdot \nabla u) \varphi dx + \iB u(\log u + 1) \nabla w \cdot \nabla \varphi dx\notag\\
&\leq 4\varepsilon \iB u^2 \varphi dx + \dfrac{1}{16\varepsilon}\iB |w_t|^2 \varphi dx + \varepsilon \iB u^2 \varphi dx + \dfrac{1}{4\varepsilon}\iB v^2 \varphi dx\notag\\
&\hspace{0.5cm}+\iB u^{1+\frac{1}{n}} w \varphi^{1-\frac{2}{n}} dx + C + A\iB |(1+\log u)\nabla u|w \varphi^{1-\frac{1}{n}} dx\notag\\
&\leq 4\varepsilon \iB u^2 \varphi dx + \dfrac{1}{16\varepsilon}\iB |w_t|^2 \varphi dx + \varepsilon \iB u^2 \varphi dx + \dfrac{1}{4\varepsilon}\iB v^2 \varphi dx + C + I + II,\label{eq:3.4.5}
\end{align}
where
\begin{align*}
I &= \iB u^{1+\frac{1}{n}} w \varphi^{1-\frac{2}{n}} dx,\\
II &= A\iB |(1+\log u)\nabla u|w \varphi^{1-\frac{1}{n}} dx.
\end{align*}
Here, we have for sufficiently large $n \ge n_0$ and any $\varepsilon > 0$
\begin{align}
I &\leq \left(\iB u^2 \varphi dx\right)^{\frac{n+1}{2n}}\left(\iB w^{\frac{2n}{n-1}} dx\right)^{\frac{n-1}{2n}}\notag\\
&\leq \varepsilon \iB u^2 \varphi dx + C(\varepsilon)\label{eq:3.4.6}
\end{align}
due to \eqref{pro:3.2.1}. As to the terms $II$, by virtue of the H\"{o}lder inequality and \eqref{pro:3.2.1} we can deduce that
\begin{align}
II &= A\iB w u^{-\frac{1}{2}}|\nabla u| u^{\frac{1}{2}}|1 + \log u| \varphi^{1-\frac{1}{n}}dx\notag\\
&\leq A\left(\iB w^6 \varphi^{1-\frac{6}{n}}dx\right)^\frac{1}{6}\left(\iB \dfrac{|\nabla u|^2}{u}\varphi dx\right)^\frac{1}{2}\left(\iB u^\frac{3}{2}|1+\log u|^3 \varphi dx\right)^\frac{1}{3}\notag\\
&\leq C\left(\iB \dfrac{|\nabla u|^2}{u}\varphi dx\right)^\frac{1}{2}\left(\iB u^\frac{3}{2}|1+\log u|^3 \varphi dx\right)^\frac{1}{3}.\notag
\end{align}
Noticing that there exists $C > 0$ such that
\begin{align*}
u^\frac{3}{2} |1 + \log u|^3 \leq C(u^2 + 1),
\end{align*}
form the H\"{o}lder inequality and the Young inequality we give
\begin{align}
II &\leq C\left(\iB \dfrac{|\nabla u|^2}{u}\varphi dx\right)^\frac{1}{2}\left(\iB u^2 \varphi dx +  1\right)^\frac{1}{3}\notag\\
&\leq \dfrac{1}{4}\iB \dfrac{|\nabla u|^2}{u}\varphi dx + C^2\left(\iB u^2 \varphi dx + 1\right)^\frac{2}{3}\notag\\
&\leq \dfrac{1}{4}\iB \dfrac{|\nabla u|^2}{u}\varphi dx + \varepsilon\iB u^2 \varphi dx + C(\varepsilon).\label{eq3.4.7}
\end{align}
Gathering \eqref{eq:3.4.2}--\eqref{eq3.4.7} yields
\begin{align}
&\dfrac{d}{dt}\iB u\log u \varphi dx + \dfrac{3}{4}\iB \dfrac{|\nabla u|^2}{u}\varphi dx\notag\\
&\leq 8\varepsilon \iB u^2 \varphi dx + \dfrac{1}{16\varepsilon}\iB |w_t|^2 \varphi dx + \dfrac{1}{4\varepsilon}\iB v^2 \varphi dx + C(\varepsilon).
\end{align}
Similarly to the previous calculations with respect to $\iB u\log u\varphi dx$, we arrive at
\begin{align}
&\dfrac{d}{dt}\iB u\log u \varphi dx + \iB u\log u \varphi dx + \dfrac{3}{4}\iB \dfrac{|\nabla u|^2}{u}\varphi dx\notag\\
&\leq 9\varepsilon \iB u^2 \varphi dx + \dfrac{1}{16\varepsilon}\iB |w_t|^2 \varphi dx + \dfrac{1}{4\varepsilon}\iB v^2 \varphi dx + C(\varepsilon).\label{eq:3.4.8}
\end{align}
Next, multiplying the third equation of \eqref{p} by $-(\Delta w)\varphi$ and integrating by parts, we have 
\begin{align*}
\dfrac{1}{2}\dfrac{d}{dt}\iB |\nabla w|^2 \varphi dx &+ \dfrac{1}{2}\iB |\Delta w|^2 \varphi dx + 
\iB |\nabla w|^2 \varphi dx \\
&\leq \dfrac{1}{2}\iB v^2 \varphi dx - \iB w_t\nabla w \cdot \nabla \varphi dx - \iB w\nabla w \cdot \nabla \varphi dx
\end{align*}
from the H\"{o}lder inequality. Furthermore, integrating by parts after multiplying the third equation of \eqref{p} by $w_t \varphi$, we similarly get
\begin{align*}
\dfrac{1}{2}\iB |w_t|^2\varphi dx &+ \dfrac{1}{2}\dfrac{d}{dt}\iB |\nabla w |^2\varphi dx + \dfrac{1}{2}\dfrac{d}{dt}\iB w^2 \varphi dx\\
&\leq \dfrac{1}{2}\iB v^2\varphi dx - \iB w_t\nabla w \cdot \nabla \varphi dx.
\end{align*}
By multiplying each of the above inequalities by $\frac{1}{2\varepsilon}$ and then by collecting them, it holds that
\begin{align}
\dfrac{1}{2\varepsilon}\dfrac{d}{dt}\iB |\nabla w|^2 \varphi dx &+ \dfrac{1}{4\varepsilon}\dfrac{d}{dt}\iB w^2\varphi dx + \dfrac{1}{4\varepsilon}\iB |w_t|^2\varphi dx\notag\\ 
&+ \dfrac{1}{4\varepsilon}\iB |\Delta w|^2 \varphi dx + \dfrac{1}{2\varepsilon}\iB |\nabla w|^2 \varphi dx\notag\\
&\leq \dfrac{1}{2\varepsilon}\iB v^2 \varphi dx - \dfrac{1}{\varepsilon}\iB w_t \nabla w \cdot \nabla \varphi dx - \dfrac{1}{2\varepsilon}\iB w \nabla w \cdot \nabla \varphi dx\notag\\
&=\dfrac{1}{2\varepsilon}\iB v^2 \varphi dx - \dfrac{1}{2\varepsilon}\iB w \nabla w \cdot \nabla \varphi dx + III,\label{eq:3.4.9}
\end{align}
where
\begin{align*}
III &= - \dfrac{1}{\varepsilon}\iB w_t \nabla w \cdot \nabla \varphi dx.
\end{align*}
Here, integrating by parts implies 
\begin{align*}
- \dfrac{1}{2\varepsilon}\iB w \nabla w \cdot \nabla \varphi dx = \dfrac{1}{2\varepsilon}\iB w^2 \Delta \varphi dx + \dfrac{1}{2\varepsilon}\iB w \nabla w \cdot \nabla \varphi dx
\end{align*}
and thus 
\begin{align}
-\dfrac{1}{2\varepsilon}\iB w \nabla w \cdot \nabla \varphi dx &= \dfrac{1}{4\varepsilon}\iB w^2\Delta\varphi dx \leq C(\varepsilon)\label{eq:3.4.9.5}
\end{align}
due to \eqref{pro:3.2.1}. On the other hand, with respect to $III$, we see from the H\"older inequality and the Young inequality that
\begin{align}
III &\leq \dfrac{A}{\varepsilon}\iB |w_t||\nabla w|\varphi^{1-\frac{1}{n}} dx\notag\\
&\leq \dfrac{1}{16\varepsilon}\iB |w_t|^2 \varphi dx + \dfrac{4A^2}{\varepsilon}\iB |\nabla w|^2 \varphi^{1-\frac{2}{n}} dx\label{eq:3.4.10}.
\end{align}
The second term on the right-hand side of the above inequality can be rewritten as
\begin{align}
\dfrac{4A^2}{\varepsilon}\iB |\nabla w|^2 \varphi^{1-\frac{2}{n}} dx &= \dfrac{4A^2}{\varepsilon}\iB (\nabla w \cdot \nabla w) \varphi^{1-\frac{2}{n}} dx\notag\\
&=- \dfrac{4A^2}{\varepsilon}\iB w(\Delta w) \varphi^{1-\frac{2}{n}} dx - \dfrac{4A^2}{\varepsilon}\iB w \nabla w \cdot \nabla \varphi^{1-\frac{2}{n}} dx\notag\\
&=: IV + V.\label{eq:3.4.11}
\end{align}
Firstly the term $IV$ is estimated as follows:
\begin{align}
IV &\leq \dfrac{4A^2}{\varepsilon}
\iB wv \varphi^{1-\frac{2}{n}} dx - \dfrac{4A^2}{\varepsilon}\iB w w_t \varphi^{1-\frac{2}{n}}dx\notag\\
&\leq \dfrac{1}{4\varepsilon}\iB v^2 \varphi dx + \dfrac{16A^4}{\varepsilon}\iB w^2 \varphi^{1-\frac{4}{n}}dx + \dfrac{1}{•16\varepsilon}\iB|w_t|^2\varphi dx + \dfrac{64A^4}{\varepsilon}\iB w^2 \varphi^{1-\frac{4}{n}} dx\notag.
\end{align}
Since $n$ is sufficiently large, it holds from \eqref{pro:3.2.1} that
\begin{align}
IV \leq \dfrac{1}{4\varepsilon}\iB v^2 \varphi dx + \dfrac{1}{•16\varepsilon}\iB|w_t|^2\varphi dx + C(\varepsilon).\label{eq:3.4.12}
\end{align}
With respect to the term $V$, we derive from the integration by parts, Lemma \ref{lem:30}, and \eqref{pro:3.2.1} that
\begin{align}
V &= -\dfrac{1}{2}\dfrac{4A^2}{\varepsilon} \iO \nabla w^2 \cdot \nabla \varphi^{1-\frac{2}{n}} dx\notag\\
&= \dfrac{2A^2}{\varepsilon}\iO w^2 \Delta \varphi^{1-\frac{2}{n}} dx\notag\\
&\leq  \dfrac{2A^2}{\varepsilon}\|\Delta \varphi^{1-\frac{2}{n}}\|_{L^\infty(\Omega)}\iO w^2 dx\notag\\
&\leq  C(\varepsilon).\label{eq:3.4.13}
\end{align}
Here, we remark that $n$ is sufficiently large.
Combining \eqref{eq:3.4.9}--\eqref{eq:3.4.13} brings about 
\begin{align}
\dfrac{1}{2\varepsilon}\dfrac{d}{dt}\iB |\nabla w|^2 \varphi dx &+ \dfrac{1}{4\varepsilon}\dfrac{d}{dt}\iB w^2\varphi dx + \dfrac{1}{8\varepsilon}\iB |w_t|^2 \varphi dx\notag\\
&+ \dfrac{1}{2\varepsilon}\iB |\nabla w|^2 \varphi dx + \dfrac{1}{4\varepsilon}\iB |\Delta w|^2 \varphi dx\notag\\
&\leq \dfrac{3}{4\varepsilon}\iB v^2 \varphi dx + C(\varepsilon).\notag
\end{align}
By virtue of \eqref{pro:3.2.1}, we add $\frac{1}{4\varepsilon}\iO w^2 \varphi dx$ to the above inequality to obtain
\begin{align}
\dfrac{1}{2\varepsilon}\dfrac{d}{dt}\iB |\nabla w|^2 \varphi dx &+ \dfrac{1}{4\varepsilon}\dfrac{d}{dt}\iB w^2\varphi dx + \dfrac{1}{8\varepsilon}\iB |w_t|^2 \varphi dx\notag\\
&+ \dfrac{1}{2\varepsilon}\iB |\nabla w|^2 \varphi dx + \dfrac{1}{4\varepsilon}\iO w^2 \varphi dx + 
\dfrac{1}{4\varepsilon}\iB |\Delta w|^2 \varphi dx\notag\\
&\leq \dfrac{3}{4\varepsilon}\iO v^2 \varphi dx + C(\varepsilon).
\label{eq:3.4.15}
\end{align}
Finally, multiplying the second equation by $v\varphi$, and using the H\"{o}lder inequality and the Young inequality, we get 
\begin{align}
\dfrac{1}{2}\dfrac{d}{dt}\iB v^2 \varphi dx + \iB v^2 \varphi dx \leq \dfrac{1}{4\varepsilon}\iB v^2 \varphi dx + \varepsilon \iB u^2 \varphi dx.\label{eq:3.4.16}
\end{align}
Therefore introducing
\begin{align*}
E(t) := \iB u\log u \varphi dx + \dfrac{1}{2\varepsilon}\iB |\nabla w|^2 \varphi dx + \left(\dfrac{1}{4\varepsilon} + \dfrac{1}{2}\right) \iB w^2 \varphi dx
\end{align*}
and further gathering \eqref{eq:3.4.8}, \eqref{eq:3.4.15}, and \eqref{eq:3.4.16}, 
we confirm that for any $\varepsilon > 0$, there exists a positive constant $C(\varepsilon)$ such that for all $t \in (0,\infty)$,
\begin{align*}
\dfrac{d}{dt}\left(E(t) + \dfrac{1}{2}\iB v^2 \varphi dx\right) &+ E(t) + \iB v^2 \varphi dx + \dfrac{3}{4}\iB \dfrac{|\nabla u|^2}{u}\varphi dx\\
&+ \dfrac{1}{8\varepsilon}\iB |w_t|^2 \varphi dx + \dfrac{1}{4\varepsilon}\iB |\Delta w|^2 \varphi dx\\
&\leq 10\varepsilon \iB u^2\varphi dx + \dfrac{5}{4\varepsilon}\iB v^2 \varphi dx + \dfrac{1}{16\varepsilon}\iB |w_t|^2\varphi dx + C(\varepsilon).
\end{align*}
This completes the proof.
\end{proof}

In particular, we need the following inequality in order to prove Proposition \ref{prop:3.11} in next subsection. The proof is straightforward by setting $\varepsilon = \frac{5}{2}$ in Lemma \ref{lem:3.5}.
\begin{corollary}\label{rem:4}
Let $x_0 \in \overline{\Omega}, 0 < r \ll 1$, and $n$ be sufficiently large, and more let $\varphi = \varphi_{(x_0,r,n)}$ be as in Lemma \ref{lem:30}. Then it holds that for any $t \in (0,\infty)$, 
\begin{align}
\dfrac{d}{dt}\tilde{E}(t) + \tilde{E}(t) + \dfrac{3}{4}\iB \dfrac{|\nabla u|^2}{u}\varphi dx + \dfrac{1}{20}\iB |w_t|^2 \varphi dx  \leq 25 \iB u^2 \varphi dx + C,\label{eq:3.6.1}
\end{align}
where
\begin{align*}
\tilde{E}(t) := E(t) + \dfrac{1}{2}\iB v^2 \varphi dx.
\end{align*}
\end{corollary}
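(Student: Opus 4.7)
The plan is to specialise the inequality established in Lemma \ref{lem:3.5} by making the specific choice $\varepsilon = 5/2$ and then reorganising the resulting expression. With this value of $\varepsilon$, the coefficient $10\varepsilon$ in front of $\iB u^2 \varphi\,dx$ on the right-hand side becomes exactly $25$, matching the constant appearing in \eqref{eq:3.6.1}. Simultaneously, the coefficient $\frac{5}{4\varepsilon}$ in front of $\iB v^2\varphi\,dx$ collapses to $\frac{1}{2}$, which is strictly smaller than the coefficient $1$ appearing in front of the same quantity on the left-hand side. Hence the $v^2$ term on the right can be absorbed into the left, leaving a residual $\frac{1}{2}\iB v^2\varphi\,dx$ there.

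The next step is to recognise that this residual term, combined with $E(t)$, reassembles into $\tilde{E}(t)$: since $\tilde{E}(t) = E(t) + \frac{1}{2}\iB v^2\varphi\,dx$, the left-hand side now contains both $\tilde{E}(t)$ and, via $\frac{d}{dt}\bigl(E(t) + \frac{1}{2}\iB v^2\varphi\,dx\bigr)$, its derivative $\frac{d}{dt}\tilde{E}(t)$. The nonnegative dissipation term $\frac{1}{4\varepsilon}\iB |\Delta w|^2\varphi\,dx$ may simply be discarded from the left-hand side, since the statement of the corollary does not require it. What remains on the left-hand side is $\frac{d}{dt}\tilde{E}(t)$, $\tilde{E}(t)$, the dissipation $\frac{3}{4}\iB \frac{|\nabla u|^2}{u}\varphi\,dx$, and the $|w_t|^2$ term with the appropriate coefficient inherited from $\frac{1}{16\varepsilon}$ at $\varepsilon = 5/2$.

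There is no genuine obstacle here; the entire argument is a bookkeeping exercise once the correct $\varepsilon$ is fixed. The only point requiring attention is to check that all coefficient manipulations (absorption of the $v^2$ term and recombination into $\tilde{E}(t)$) are performed consistently, and that the constant $C(\varepsilon)$ from Lemma \ref{lem:3.5} specialised at $\varepsilon = 5/2$ produces the constant $C$ claimed in \eqref{eq:3.6.1}. This explains why the author describes the proof as straightforward.
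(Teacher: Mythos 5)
Your approach is exactly the paper's: the author's own proof consists of the single remark that the result follows by setting $\varepsilon = 5/2$ in Lemma \ref{lem:3.5}, and your bookkeeping (absorbing $\frac{5}{4\varepsilon}\iB v^2\varphi\,dx = \frac{1}{2}\iB v^2\varphi\,dx$ into the coefficient-$1$ term on the left, recombining what remains into $\tilde E(t)$, and discarding the nonnegative $|\Delta w|^2$ dissipation) is the correct way to spell that out. One detail worth verifying: with $\varepsilon = 5/2$ the $|w_t|^2$ coefficient is $\frac{1}{16\varepsilon} = \frac{1}{40}$, not $\frac{1}{20}$ as written in \eqref{eq:3.6.1}, so the stated constant appears to be a typographical slip in the paper; since only positivity of that coefficient is used in the proof of Proposition \ref{prop:3.11}, the discrepancy is harmless, but you should state the coefficient you actually obtain rather than leave it implicit.
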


\subsection{Iteration argument}\label{subsec:3.3}

In the previous subsection, we aimed to derive the inequality to establish the boundedness of $L\log L$ in a localized domain. In this subsection, based on the quasi-dissipative inequality derived in the previous subsection, we utilize the Alikakos--Moser iteration scheme (see \cite{A1979}) to obtain the time-independent a priori estimates in a localized area.
Besides, it should be noted that we always assume the global-in-time solution of \eqref{p}. We obtain the global estimates in time for $\iO u^2 \varphi dx$ and $\iO v^3 \varphi dx$, which differs from \cite{NSS2000}.
\begin{proposition}\label{prop:3.11}
Let $x_0 \in \overline{\Omega}, 0 < r \ll 1$, and $n$ be sufficiently large, and more let $(u,v,w)$ be a solution of \eqref{p} in $\Omega \times [0,\infty)$. Then, if the solution $(u,v,w)$ satisfies
\begin{align}
\sup_{t > 0}\int_{B_{2r}(x_0) \cap \Omega} u\log u dx < \infty,
\end{align}
it holds that
\begin{align}
\sup_{t > 0}\|u(t)\|_{L^\infty(B_{r}(x_0) \cap \Omega)} < \infty.
\end{align}
\end{proposition}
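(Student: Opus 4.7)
The plan is to first combine the localized energy inequality of Corollary \ref{rem:4} with the localized Sobolev-type inequality of Lemma \ref{lem:3.3}(ii) to upgrade the assumed $L\log L$-bound into time-uniform control of $\tilde E(t)$, and then to carry out an Alikakos--Moser iteration on a shrinking family of cut-offs.

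\textbf{Step 1: from $L\log L$ to $\tilde E$-control.} Fix $\varphi = \varphi_{(x_0,r,n)}$ as in Lemma \ref{lem:30} with $n$ sufficiently large, and set $M_0 := \sup_{t>0}\int_{B_{2r}(x_0)\cap\Omega} u\log u\, dx$, which is finite by hypothesis. Applied with $z = u$, Lemma \ref{lem:3.3}(ii) yields, for every $s > 1$,
\begin{equation*}
\iB u^2 \varphi\, dx \;\leq\; \frac{4K_{\mathrm{Sob}}^2\bigl(M_0 + |B_{2r}|/e\bigr)}{\log s}\iB \frac{|\nabla u|^2}{u}\varphi\, dx \;+\; C\bigl(\|u_0\|_{L^1(\Omega)}^2 + s^2\bigr).
\end{equation*}
Choosing $s = s_0$ so large that the prefactor multiplied by $25$ is at most $1/2$ and inserting into \eqref{eq:3.6.1} absorbs the gradient term on the left-hand side, producing
\begin{equation*}
\frac{d}{dt}\tilde E(t) + \tilde E(t) + \tfrac{1}{2}\iB \frac{|\nabla u|^2}{u}\varphi\, dx + \tfrac{1}{20}\iB |w_t|^2 \varphi\, dx \;\leq\; C.
\end{equation*}
Since $\tilde E(0)$ is finite, Gronwall's inequality gives $\sup_{t>0}\tilde E(t) < \infty$, which in particular yields time-uniform bounds on $\iB u\log u\cdot\varphi\, dx$, $\iB v^2\varphi\, dx$, $\iB w^2 \varphi\, dx$ and $\iB |\nabla w|^2\varphi\, dx$, hence (via Lemma \ref{lem:3.3}(ii) once more) also on $\iB u^2 \varphi\, dx$.

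\textbf{Step 2: Moser iteration with shrinking supports.} Choose radii $r_k = r(1 + 2^{-k})\searrow r$ and nested cut-offs $\varphi_k := \varphi_{(x_0,r_k,n)}$ satisfying $\varphi_k \equiv 1$ on $\mathrm{supp}\,\varphi_{k+1}$, and set $p_k = 2^k$. Using the $L^2(\varphi)$-control of $v$ from Step 1 together with the third equation of \eqref{p} read as $w_t - \Delta w + w = v$, standard interior parabolic regularity first delivers a time-uniform $L^\infty$-bound for $w$ in a fixed neighbourhood of $\mathrm{supp}\,\varphi_1$. Testing the first equation of \eqref{p} against $u^{p-1}\varphi_k$ with $p = p_k$, integrating by parts and using the pointwise bound on $w$, one arrives at
\begin{equation*}
\frac{d}{dt}\iB u^p \varphi_k\, dx + \iB u^p \varphi_k\, dx + \tfrac{2(p-1)}{p}\iB |\nabla u^{p/2}|^2 \varphi_k\, dx \;\leq\; C p^{\alpha}\iB u^p \varphi_{k-1}\, dx + C,
\end{equation*}
with $\alpha > 0$ independent of $k$. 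Applying Lemma \ref{lem:3.3}(i) with $z = u^{p/2}$ on the larger support $B_{r_{k-1}}(x_0)$ and taking the supremum in time then yields a recursion $X_k \leq (Cp_k^{\alpha})^{1/p_k}\max\{1,X_{k-1}\}^{\gamma/p_k}$ for some $\gamma$ independent of $k$, where $X_k := \sup_{t\geq 0}\|u(t)\|_{L^{p_k}(B_{r_k}(x_0)\cap\Omega)}$. Iterating and passing $k \to \infty$ gives $\sup_{t>0}\|u(t)\|_{L^\infty(B_r(x_0)\cap\Omega)} < \infty$.

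\textbf{Main obstacle.} The delicate point is arranging the chemotactic contribution $(p-1)\iB u^{p-1}\nabla u\cdot\nabla w\,\varphi_k\, dx$ so that the $w$-dependence in the iteration remains controllable: relying only on $\iB |\nabla w|^2\varphi\, dx$ from Step 1 would not suffice because the resulting constants would blow up with $p$, so $w$ must be upgraded to a pointwise bound beforehand via the regularity theory for $w_t - \Delta w + w = v$. A secondary technicality is that the cut-off weights $\varphi_k^{1-\alpha/n}$ produced by the Lemma \ref{lem:30}-estimates when integrating by parts must stay comparable to $\varphi_k$ throughout the iteration, which is precisely why $n$ is taken sufficiently large at the outset.
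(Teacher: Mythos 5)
Your overall architecture (energy inequality to absorb the nonlinearity, then Moser iteration under a cut-off) matches the paper's, but there are two genuine gaps in the chain of estimates.

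\textbf{Gap 1: The $L^2$-bound on $u$ does not follow from the $\tilde E$-bound as claimed.} After absorbing the $u^2$-term and applying Gronwall, you obtain $\sup_t\tilde E(t)<\infty$, giving time-uniform control of $\iB u\log u\,\varphi\,dx$, $\iB v^2\varphi\,dx$, $\iB |\nabla w|^2\varphi\,dx$, $\iB w^2\varphi\,dx$, and exponentially weighted time integrals of $\iB \frac{|\nabla u|^2}{u}\varphi\,dx$ and $\iB |w_t|^2\varphi\,dx$. But Lemma~\ref{lem:3.3}(ii) bounds $\iB u^2\varphi\,dx$ in terms of $\iB \frac{|\nabla u|^2}{u}\varphi\,dx$, which is \emph{not} time-uniform, only time-averaged. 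So your parenthetical ``hence (via Lemma~\ref{lem:3.3}(ii) once more) also on $\iB u^2\varphi\,dx$'' is not justified. The paper's Step~1 is precisely the non-trivial work required to upgrade this to a genuine $\sup_t\iB u^2\varphi\,dx<\infty$: it tests the first equation against $u\varphi$, converts $\Delta w$ into $w_t+w-v$, controls the resulting $\|u^2\|_{L^2}\cdot\|w_t\|_{L^2}$ term via Gagliardo--Nirenberg, and closes a Gronwall argument using the exponentially weighted integral $\int_0^t e^{s-t}\int_{B_{2r}}|w_t|^2\,dx\,ds<\infty$. It also simultaneously proves $\sup_t\iB v^3\varphi\,dx<\infty$, which you do not even mention and which turns out to be essential for the next step.

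\textbf{Gap 2: The claim that $L^2$-control of $v$ yields a time-uniform pointwise bound for $w$ by ``standard interior parabolic regularity'' is unsupported, and an $L^\infty$-bound on $w$ alone is insufficient for the iteration.} First, $v$ is controlled only locally, not globally in $\Omega$; the far-field part of $v$ also feeds into $w$ near $x_0$ through the heat kernel, and this contribution has no regularity beyond the $L^1$-mass bound. The paper deals with this via a decomposition $v=v_1+v_2$ with $v_1=v\chi_{B_{2r}(x_0)}$, handling $w_2$ by direct heat-kernel decay estimates and $w_1$ by fractional-power semigroup estimates using the local $L^3$-control of $v$. That $L^3$ is what makes the fractional-power embedding $D((I-\Delta_N)^\alpha)\hookrightarrow C^1$ work with $\alpha\in(5/6,1)$; with only $L^2$ the exponent needed would exceed $1$, so the semigroup integral would diverge. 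Second, what the Moser iteration actually requires is a pointwise bound on $\nabla w$, not merely on $w$, in order to estimate the chemotactic term $\iB u\,\nabla w\cdot\nabla(u^p\varphi_k)\,dx$; without it you are forced to integrate by parts, reintroduce $\Delta w=w_t+w-v$, and face $p$-dependent $\|w_t\|_{L^2}$-factors which you have no mechanism to control uniformly in $p$. The paper explicitly establishes $\sup_t\|w(t)\|_{C^1(\overline{B_r(x_0)\cap\Omega})}<\infty$ precisely so that $|\nabla w|\leq C$ can be used directly in Step~3.

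In short: the proposal has the right skeleton, but the two bridging estimates — from $L\log L$ to $L^2$ for $u$, and from local $L^p$ for $v$ to $C^1$ for $w$ — are the real content of the proposition, and the proposal replaces them with assertions that do not follow from what precedes them.
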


\begin{proof}

Let $\varphi = \varphi_{(x_0,4r,n)}$ be as in Lemma \ref{lem:30}. Since the assumption signifies
\begin{align*}
\sup_{t > 0}\int_{B_{8r}(x_0) \cap \Omega} \left(u\log u + \dfrac{1}{e}\right) dx < \infty,
\end{align*}
owing to Lemma \ref{lem:3.3} (ii), we can choose sufficiently large $s > 1$ such that for all $t \in (0,\infty)$,
\begin{align*}
25 \iB u^2 \varphi \leq \dfrac{1}{4}\iB \dfrac{|\nabla u|^2}{u}\varphi dx + C_1\|u_0\|_{L^1(\Omega)}^2 + C_2,
\end{align*}
where $C_1$ is a positive constant determined by $K_{\mathrm{Sob}}$ and $A$, and $C_2$ is a positive constant determined by $K_{\mathrm{Sob}}, A$, and $\Omega$. According to Corollary \ref{rem:4} and the above inequality, we have
\begin{align*}
\dfrac{d}{dt}\tilde{E}(t) + \tilde{E}(t) + \dfrac{3}{4}\iB \dfrac{|\nabla u|^2}{u}\varphi dx + \dfrac{1}{20}\iB |w_t|^2 \varphi dx &\leq 25\iB u^2 \varphi dx + C\\
&\leq \dfrac{1}{4}\iB \dfrac{|\nabla u|^2}{u}\varphi dx + C
\end{align*}
and thus for all $t \in (0,\infty)$,
\begin{align*}
\dfrac{d}{dt}\tilde{E}(t) + \tilde{E}(t) + \dfrac{1}{2}\iB \dfrac{|\nabla u|^2}{u}\varphi dx + \dfrac{1}{20}\iB |w_t|^2 \varphi dx \leq C.
\end{align*}
This represents
\begin{align}
&\sup_{t > 0} \iB |\nabla w|^2 \varphi dx < \infty,\label{eq:3.7.1}\\
&\sup_{t > 0} \iB v^2 \varphi dx < \infty,\label{eq:3.7.2}\\
&\sup_{t > 0}\int_0^t e^{s-t} \iB |w_t|^2 \varphi dx ds < \infty.\label{eq:3.7.3}
\end{align}
Thanks to Lemma \ref{lem:30}, we can rewrite \eqref{eq:3.7.3} as
\begin{align}
\sup_{t > 0}\int_0^t e^{s-t} \int_{B_{4r}(x_0) \cap \Omega} |w_t|^2 dx ds < \infty.\label{eq:3.7.4}
\end{align}
Redefining $\varphi$ as $\varphi = \varphi_{(x_0,2r,n)}$ and using a similar approach, we also have \eqref{eq:3.7.1} and \eqref{eq:3.7.2} for such $\varphi$.

\medskip
\textbf{(Step 1)} We will first show that
\begin{align}
\sup_{t > 0}\int_{B_{2r}(x_0) \cap \Omega}|w_t|^2 dx < \infty.\label{wt}
\end{align}
Let $\varphi = \varphi_{(x_0,2r,n)}$ be as in Lemma \ref{lem:30}. Multiplying the third equation of \eqref{p} by $w_{tt}\varphi$ and integrating by parts, we have
\begin{align}
&\dfrac{1}{2}\dfrac{d}{dt}\iO |w_t|^2 \varphi dx\notag\\
 &= \iO (w_{tt}w_t)\varphi dx\notag\\
&= \iO (\Delta w_t - w_t + v_t)w_t \varphi dx\notag\\
&= -\iO |\nabla w_t|^2 \varphi dx - \iO |w_t|^2 \varphi dx - \iO w_t\nabla w_t \cdot \nabla \varphi dx + \iO v_t w_t \varphi dx.\label{wt_1}
\end{align}
Here thanks to \eqref{eq:3.7.2},
we substitute the second equation of \eqref{p} into the last term on the right-hand side of the above equality and apply the Cauchy--Schwarz and the Young inequalities to obtain
\begin{align}
\iO v_t w_t \varphi dx &\leq \iO v^2\varphi dx + \dfrac{1}{4}\iO |w_t|^2 \varphi dx + \iO u^2 \varphi dx + \dfrac{1}{4}\iO |w_t|^2 \varphi dx\notag\\
&\leq \dfrac{1}{2}\iO |w_t|^2 \varphi dx + \iO u^2 \varphi dx + C.\label{wt_2}
\end{align}
Moreover, by noting that $n$ is sufficiently large, 
integration by parts and Lemma \ref{lem:30} yield that
\begin{align}
-\iO w_t\nabla w_t \cdot \nabla \varphi dx &= -\dfrac{1}{2}\iO \nabla (w_t)^2 \cdot \nabla \varphi dx\notag\\
&= \dfrac{1}{2}\iO |w_t|^2 \Delta \varphi dx\notag\\
&\leq \dfrac{B}{2}\int_{B_{4r}(x_0) \cap \Omega} |w_t|^2 dx.\label{wt_3}
\end{align}
Combining \eqref{wt_2} and \eqref{wt_3} with \eqref{wt_1}, we achieve
\begin{align}
\dfrac{d}{dt}\iO |w_t|^2 \varphi dx + \iO |w_t|^2\varphi + 2\iO |\nabla w_t|^2 \varphi &\leq 2\iO u^2\varphi + B\int_{B_{4r}(x_0) \cap \Omega} |w_t|^2 dx + C.\label{wt_4}
\end{align}
By setting 
\begin{align*}
\mathbb{E}(t) := \tilde{E}(t) + \iO |w_t|^2 \varphi  dx,
\end{align*}
the combination of Corollary \ref{rem:4} and \eqref{wt_4} allows us to derive
\begin{align*}
\dfrac{d}{dt}\mathbb{E}(t) + \mathbb{E}(t) + \dfrac{3}{4}\iO \dfrac{|\nabla u|^2}{u}\varphi dx &\leq 27\iO u^2 \varphi dx + B\int_{B_{4r}(x_0) \cap \Omega} |w_t|^2 dx + C.
\end{align*}
In the same manner as above, we can obtain from the assumption and Lemma \ref{lem:3.3} (ii) that
\begin{align*}
\dfrac{d}{dt}\mathbb{E}(t) + \mathbb{E}(t) + \dfrac{1}{2}\iO \dfrac{|\nabla u|^2}{u}\varphi dx \leq B\int_{B_{4r}(x_0) \cap \Omega} |w_t|^2 dx + C.
\end{align*}
Therefore by multiplying $e^t$ and integrating with respect to time we infer from 
\eqref{eq:3.7.4} that for all $t \in (0,\infty)$
\begin{align*}
\mathbb{E}(t) &\leq \mathbb{E}(0)e^{-t} + B\int_0^t e^{s-t}\int_{B_{4r}(x_0) \cap \Omega} |w_t|^2 dx ds + C\\
&\leq C,
\end{align*}
which implies our claim \eqref{wt}.

From now on,  we assume $\varphi = \varphi_{(x_0,r,n)}$ throughout the proof.
The remainder of the proof falls naturally into three parts.

\medskip
\textbf{(Step 2)} We will next show that
\begin{align}
\sup_{t>0} \iB u^2 \varphi dx < \infty, \quad \sup_{t > 0} \iB v^3\varphi dx  < \infty.\label{eq:3.7.5}
\end{align}
Multiplying the first equation of \eqref{p} by $u\varphi$ and integrating by parts, we obtain
\begin{align*}
&\dfrac{1}{2}\dfrac{d}{dt}\iB u^2\varphi dx + \iB |\nabla u|^2 \varphi dx\\
&= -\iB u\nabla u \cdot \nabla \varphi dx + \iB u (\nabla w \cdot \nabla u) \varphi dx+ \iB u^2 \nabla w \cdot \nabla \varphi dx.
\end{align*}
Here, since the solution $(u,v,w)$ of \eqref{p} is positive, by means of the H\"{o}lder inequality and the third  equation of \eqref{p} we establish
\begin{align*}
\iB u (\nabla w \cdot \nabla u) \varphi dx &= \dfrac{1}{2}\iB (\nabla w \cdot \nabla u^2) \varphi dx\\
&= -\dfrac{1}{2}\iB u^2 (\Delta w) \varphi dx - \dfrac{1}{2}\iB u^2 \nabla w \cdot \nabla \varphi dx\\
&= -\dfrac{1}{2}\iB u^2 (w_t + w - v)\varphi dx - \dfrac{1}{2}\iB u^2 \nabla w \cdot \nabla \varphi dx\\
&\leq -\dfrac{1}{2} \iB u^2 w_t \varphi dx + \dfrac{1}{2}\iB u^2 v \varphi dx - \dfrac{1}{2}\iB u^2 \nabla w \cdot \nabla \varphi dx\\
&\leq \dfrac{1}{2}\left(\iB u^4 \varphi^2 dx\right)^\frac{1}{2}\left(\int_{B_{2r}(x_0) \cap \Omega} |w_t|^2 dx\right)^\frac{1}{2}\\
&\hspace{0.5cm}+ \dfrac{1}{2}\left(\iB u^3\varphi dx\right)^\frac{2}{3}\left(\iB v^3 \varphi dx\right)^\frac{1}{3} - \dfrac{1}{2}\iB u^2 \nabla w \cdot \nabla \varphi dx.
\end{align*}
Integrating by parts, we thus gain
\begin{align}
&\dfrac{1}{2}\dfrac{d}{dt}\iB u^2\varphi dx + \iB |\nabla u|^2 \varphi dx\notag\\ &\leq \dfrac{1}{2}\iB u^2
\Delta \varphi dx + \dfrac{1}{2}\iB u^2 \nabla w \cdot \nabla \varphi dx
 + \dfrac{1}{2}\left(\iB u^4 \varphi^2 dx\right)^\frac{1}{2}\left(\int_{B_{2r}(x_0) \cap \Omega} |w_t|^2 dx\right)^\frac{1}{2}\notag\\
&\hspace{0.5cm}+\dfrac{1}{2}\left(\iB u^3\varphi dx\right)^\frac{2}{3}\left(\iB v^3 \varphi dx\right)^\frac{1}{3}\notag\\
&= \dfrac{1}{2}\iB u^2 \Delta \varphi dx - \dfrac{1}{2}\iB w \nabla u^2 \cdot \nabla\varphi dx - \dfrac{1}{2}\iB u^2 w \Delta \varphi dx\notag\\
&\hspace{0.5cm} + \dfrac{1}{2}\left(\iB u^4 \varphi^2 dx\right)^\frac{1}{2}\left(\int_{B_{2r}(x_0) \cap \Omega} |w_t|^2 dx\right)^\frac{1}{2} +\dfrac{1}{2}\left(\iB u^3\varphi dx\right)^\frac{2}{3}\left(\iB v^3 \varphi dx\right)^\frac{1}{3}\notag\\
&= \dfrac{1}{2}\iB u^2(1-w) \Delta \varphi dx - \dfrac{1}{2}\iB w \nabla u^2 \cdot \nabla \varphi dx\notag\\
&\hspace{0.5cm} + \dfrac{1}{2}\left(\iB u^4 \varphi^2 dx\right)^\frac{1}{2}\left(\int_{B_{2r}(x_0) \cap \Omega} |w_t|^2 dx\right)^\frac{1}{2} +\dfrac{1}{2}\left(\iB u^3\varphi dx\right)^\frac{2}{3}\left(\iB v^3 \varphi dx\right)^\frac{1}{3}\notag\\
& =: \bf{I_1} + \bf{I_2} + \bf{I_3} + \bf{I_4},\label{eq:3.7.6}
\end{align}
where
\begin{align*}
\bf{I_1} &:= \dfrac{1}{2}\iB u^2(1-w) \Delta \varphi dx,\\
\bf{I_2} &:= - \dfrac{1}{2}\iB w \nabla u^2 \cdot \nabla \varphi dx,\\
\bf{I_3} &:= \dfrac{1}{2}\left(\iB u^4 \varphi^2 dx\right)^\frac{1}{2}\left(\int_{B_{2r}(x_0) \cap \Omega} |w_t|^2 dx\right)^\frac{1}{2},\\
\bf{I_4} &:= \dfrac{1}{2}\left(\iB u^3\varphi dx\right)^\frac{2}{3}\left(\iB v^3 \varphi dx\right)^\frac{1}{3}.
\end{align*}
Applying the H\"older inequality to $\bf{I_1}$, we deduce from \eqref{pro:3.2.1} and Lemma \ref{lem:30} that
\begin{align}
\bf{I_1} &\leq \dfrac{B}{2}\iB u^2 (w+1) \varphi^{1-\frac{2}{n}} dx\notag\\
&\leq \dfrac{1}{3}\iB u^3 \varphi dx + C.\label{eq:3.7.7}
\end{align}
Here, as noted in \eqref{eq:3.4.6} in the proof of Lemma \ref{lem:3.5}, we shall take sufficiently large $n$.
The term $\bf{I_2}$ can be handled in much the same way. Through the use of the H\"older inequality and the Young inequality we thus get, thanks to \eqref{pro:3.2.1},
\begin{align}
\bf{I_2} &\leq \iB w u |\nabla u||\nabla \varphi| dx\notag\\
&\leq A\iB w u |\nabla u|\varphi^{1-\frac{1}{n}} dx\notag\\
&\leq A\left(\iB |\nabla u|^2 \varphi dx\right)^\frac{1}{2}\left(\iB u^3 \varphi dx\right)^\frac{1}{3}\left(\iB w^6 \varphi^{1-\frac{6}{n}} dx\right)^\frac{1}{6}\notag\\
&\leq C \left(\iB |\nabla u|^2 \varphi dx\right)^\frac{1}{2}\left(\iB u^3 \varphi dx\right)^\frac{1}{3}\notag\\
&\leq \dfrac{1}{8}\iB |\nabla u|^2 \varphi dx + 2C^2\left(\iB u^3 \varphi dx\right)^\frac{2}{3}\notag\\
&\leq \dfrac{1}{8}\iB |\nabla u|^2 \varphi dx +\dfrac{1}{3}\iB u^3 \varphi dx + C.\label{eq:3.7.8}
\end{align}
We next make use of the Gagliardo--Nirenberg inequality:
\begin{align*}
\|z\|_{L^4(\Omega)} \leq K\left(\|\nabla z\|_{L^2(\Omega)}^\frac{1}{2} \|z\|_{L^2(\Omega)}^\frac{1}{2} + \|z\|_{L^2(\Omega)}\right)
\end{align*}
for $z \in H^1(\Omega)$, where $K > 0$ is a constant depending only on $\Omega$. Applying the above inequality with $z = u\varphi^\frac{1}{2}$ and employing the H\"older inequality, we derive for sufficiently large $n$ that
\begin{align}
\bf{I_3} &\leq K^2\left(\iB |\nabla (u\varphi^\frac{1}{2})|^2
dx\right)^\frac{1}{2}\left(\iB u^2 \varphi dx\right)^\frac{1}{2}\left(\int_{B_{2r}(x_0) \cap \Omega} |w_t|^2  dx\right)^\frac{1}{2}\notag\\
&\hspace{0.5cm} + K^2\left(\iB u^2 \varphi dx\right)\left(\int_{B_{2r}(x_0) \cap \Omega} |w_t|^2 dx\right)^\frac{1}{2}\notag\\
&\leq \dfrac{1}{16}\iB |\nabla (u\varphi^\frac{1}{2})|^2
dx + 4K^4\left(\iB u^2\varphi dx\right)\left(\int_{B_{2r}(x_0) \cap \Omega} |w_t|^2  dx\right)\notag\\
&\hspace{0.5cm}+\dfrac{K^2}{2}\left(\int_{B_{2r}(x_0) \cap \Omega} |w_t|^2  dx + 1 \right)\left(\iB u^2\varphi dx\right)\notag\\
&\leq \dfrac{1}{8}\iB |\nabla u|^2 \varphi dx + \dfrac{A^2}{32}\iB u^2 \varphi^{1-\frac{2}{n}} dx
+ \dfrac{K^2}{2} \iB u^2 \varphi dx\notag\\ 
&\hspace{0.5cm} + \left(4K^4 + \dfrac{K^2}{2}\right)\left(\iB u^2\varphi dx\right)\left(\int_{B_{2r}(x_0) \cap \Omega} |w_t|^2  dx\right)\notag\\
&\leq \dfrac{1}{8}\iB |\nabla u|^2 \varphi dx + \dfrac{1}{3}\iB u^3 \varphi dx + C + \dfrac{K^2}{2}\iB u^2 \varphi dx\notag\\
&\hspace{0.5cm}+\left(4K^4 + \dfrac{K^2}{2}\right)\left(\iB u^2\varphi dx\right)\left(\int_{B_{2r}(x_0) \cap \Omega} |w_t|^2  dx\right).\label{eq:3.7.9}
\end{align}
As to the term $\bf{I_4}$, we notice that the Young inequality gives
\begin{align}
\bf{I_4} &\leq \dfrac{1}{3}\iB u^3 \varphi dx + \dfrac{1}{6}\iB v^3 \varphi dx.\label{eq:3.7.10}
\end{align}
Therefore combining \eqref{eq:3.7.6}--\eqref{eq:3.7.10} yields that
\begin{align}
\dfrac{d}{dt}\iB u^2 \varphi dx + \dfrac{3}{2}\iB |\nabla u|^2 \varphi dx &\leq \dfrac{8}{3}\iB u^3 \varphi dx + \dfrac{1}{3}\iB v^3 \varphi dx + K^2\iB u^2 \varphi dx + C\notag\\
&\hspace{0.3cm}+ (8K^4 + K^2)\left(\iB u^2 \varphi dx\right)\left(\int_{B_{2r}(x_0) \cap \Omega}|w_t|^2 dx\right).
\label{eq:3.7.99}
\end{align}
Multiplying the second equation of \eqref{p} by $v^2 \varphi$ in order to obtain the global boundedness in time of $\iB v^3\varphi dx$, we have
\begin{align*}
\dfrac{1}{3}\dfrac{d}{dt}\iB v^3 \varphi dx + \iB v^3\varphi dx &= \iB uv^2 \varphi dx\\
&\leq \dfrac{2}{3}\iB v^3\varphi dx + \dfrac{1}{3}\iB u^3 \varphi dx
\end{align*}
and hence
\begin{align}
\dfrac{d}{dt}\iB v^3 \varphi dx + \iB v^3 \varphi dx \leq \iB u^3 \varphi dx.\label{eq:3.7.100}
\end{align}
Setting 
\begin{align*}
Y(t) := \iB u^2 \varphi dx + \iB v^3 \varphi dx
\end{align*}
and collecting \eqref{eq:3.7.99} and \eqref{eq:3.7.100}, we see that
\begin{align*}
\dfrac{d}{dt}Y(t) + \dfrac{3}{2}\iB |\nabla u|^2 \varphi dx + \dfrac{2}{3}\iB v^3 \varphi dx &\leq \dfrac{11}{3}\iB u^3 \varphi dx + K^2 \iB u^2\varphi dx + C\\
&\hspace{0.5cm}+C\left(\int_{B_{2r}(x_0) \cap \Omega}|w_t|^2  dx\right)\left(\iB u^2 \varphi dx\right).
\end{align*}
Thanks to \eqref{wt}, we have
\begin{align*}
\dfrac{d}{dt}Y(t) + \dfrac{3}{2}\iB |\nabla u|^2 \varphi dx + \dfrac{2}{3}\iB v^3 \varphi dx &\leq \dfrac{11}{3}\iB u^3 \varphi dx + (K^2 + C) \iB u^2\varphi dx + C.
\end{align*}
Recalling that
\begin{align*}
\sup_{t > 0} \int_{B_{2r}(x_0) \cap \Omega} \left(u\log u + \dfrac{1}{e}\right) dx < \infty, 
\end{align*}
by Lemma \ref{lem:3.3} (iii),
we can find sufficiently large $s > 1$ satisfying
\begin{align*}
\dfrac{11}{3}\iB u^3 \varphi dx \leq \dfrac{1}{2} \iB |\nabla u|^2 \varphi dx + C\|u_0\|_{L^1(\Omega)}^3,
\end{align*}
where $C$ is a positive constant depending on $K_{\mathrm{Sob}}, A$, and $\Omega$.
The Gagliardo--Nirenberg inequality and the Young inequality make it obvious that: for any $\varepsilon > 0$, there exists $C(\varepsilon) > 0$ such that
\begin{align}
\|z\|_{L^2(\Omega)}^2 \leq \varepsilon \|\nabla z\|_{L^2(\Omega)}^2 + C(\varepsilon)\|z\|_{L^1(\Omega)}^2\label{eq:gn2}
\end{align}
for all $z \in H^1(\Omega)$. By 
utilising the above inequality as $z = u\varphi^\frac{1}{2}$, we can deduce from Lemma \ref{lem:30} and \eqref{pro:3.1.1} that
\begin{align}
(K^2 + C) \iB u^2\varphi dx &\leq \varepsilon \iB |\nabla (u\varphi^\frac{1}{2})|^2 dx + C(\varepsilon)\notag\\
&\leq 2\varepsilon \iB |\nabla u|^2 \varphi dx + 2\varepsilon\iB u^2 |\nabla \varphi^\frac{1}{2}|^2 dx + C(\varepsilon)\notag\\
&\leq 2\varepsilon \iB |\nabla u|^2 \varphi dx + \dfrac{\varepsilon A^2}{2}\iB u^2 \varphi^{1-\frac{2}{n}} dx + C(\varepsilon)\notag\\
&\leq 2\varepsilon \iB |\nabla u|^2 \varphi dx + \iB u^3 \varphi dx + C\notag\\
&\leq 3\varepsilon \iB |\nabla u|^2 \varphi dx + C.\label{eq:3.7.1000}
\end{align}
Here, the last inequality in the above follows by the same method as before. By pulling together the estimates so far and choosing $\varepsilon = \frac{1}{6}$, we thus infer that
\begin{align*}
\dfrac{d}{dt}Y(t) +\dfrac{1}{2}\iB |\nabla u|^2 \varphi dx + \dfrac{2}{3}\iB v^3 \varphi dx
&\leq C.
\end{align*}
We now apply this argument \eqref{eq:3.7.1000} again, with $K^2 + C$ replaced by $1$, to obtain

\begin{align*}
&\dfrac{d}{dt}Y(t) + \iB u^2\varphi dx - C + \dfrac{2}{3}\iB v^3 \varphi dx \leq C
\end{align*}
and hence using the definition of $Y(t)$, we acquire that

\begin{align*}
\dfrac{d}{dt}Y(t) + \dfrac{2}{3}Y(t) &\leq C.
\end{align*}

Therefore by multiplying $e^{\frac{2}{3}t}$ and integrating with respect to time,  we get for all $t \in (0,\infty)$
\begin{align*}
Y(t) &\leq Y(0)e^{-\frac{2}{3}t} + C\int_0^t e^{\frac{2}{3}(s-t)} ds\\
&\leq C.
\end{align*}
This is the desired conclusion.

\medskip
\textbf{(Step 3)} We will prove 
\begin{align}
\sup_{t > 0} \|w(t)\|_{C^1(\overline{B_r(x_0) \cap \Omega})} < \infty\quad \mathrm{for\ all}\ 0 < r \ll 1.\label{eq:3.7.12.1}
\end{align}
Let $\chi_{B_{2r}(x_0)}$ be the characteristic function relative to $B_{2r}(x_0)$.
Setting $v_1 = v \chi_{B_{2r}(x_0)}, v_2 = v - v_1$, we consider the solutions $w_1, w_2$ to
\begin{equation}\notag
\begin{cases}
w_t = \Delta w -w + f \qquad &\mathrm{in}\ \Omega \times (0,\infty),\\
\dfrac{\partial w}{\partial \nu} = 0 \qquad &\mathrm{on}\ \partial \Omega \times (0,\infty),\\
w(\cdot,0)=0\qquad &\mathrm{in}\ \Omega,
\end{cases}
\end{equation}
where $f = v_1, v_2$ respectively. 
We first compute $w_2$. We are familiar with the representative formula using the fundamental solution $\Gamma$ to the heat equation (see \cite{FA1964, QS2007}) and hence we have
\begin{align*}
w_2(x,t) &= \int_0^t e^{(t-s)(\Delta_N -1)}v_2(x,s) ds\\
&= \int_0^t e^{-(t-s)} \int_{\Omega} \Gamma(x,y,t-s) v_2(y,s) dy ds\\
&= \int_0^t e^{-(t-s)}\int_{\Omega \setminus B_{2r}(x_0)} \Gamma(x,y,t-s) v(y,s) dy ds
\end{align*}
for $(x,t) \in \Omega \times (0, \infty)$. In particular, for $ x \in B_r(x_0) \cap \Omega$, we can deduce from the general property for the fundamental solution $\Gamma$ (see \cite[Section 5.3]{T1997}, \cite[Section 13 in Chapter IV]{LSU1968}) that for $ t \in (0,\infty)$,
\begin{align*}
w_2(x,t) &\leq \int_0^t e^{-(t-s)}\int_{\Omega \setminus B_{2r}(x_0)} |\Gamma(x,y,t-s)| v(y,s) dy ds\\
&\leq C \int_0^t e^{-(t-s)} \int_{\Omega \setminus B_{2r}(x_0)} (t-s)^{-1} \exp \left[-\dfrac{|x-y|^2}{C(t-s)}\right] v(y,s) dy ds\\
&= C\int_0^t e^{-(t-s)} \int_{\Omega \setminus B_{2r}(x_0)} \dfrac{|x-y|^2}{C(t-s)}\exp \left[-\dfrac{|x-y|^2}{C(t-s)}\right] \dfrac{C}{|x-y|^2} v(y,s) dy ds,
\end{align*}
where $C$ is a positive constant. Noting that $s \mapsto se^{-s}$ has the maximum for $ s \ge 0$ and 
\begin{align*}
\inf\{|x-y| ; x \in B_r(x_0), y \in \Omega \setminus B_{2r}(x_0)\} \ge r,
\end{align*}
we compute
\begin{align*}
w_2(x,t) &\leq C \int_0^t \int_{\Omega \setminus B_{2r}(x_0)} e^{-(t-s)} v(y,s) dy ds\\
&= C\int_0^t e^{-(t-s)}\int_{\Omega \setminus B_{2r}(x_0)} v(y,s) dy ds\\
&\leq C\max\{\|u_0\|_{L^1(\Omega)}, \|v_0\|_{L^1(\Omega)}\}\int_0^t e^{-(t-s)} ds\\
&\leq C\max\{\|u_0\|_{L^1(\Omega)}, \|v_0\|_{L^1(\Omega)}\}
\end{align*}
due to Proposition \ref{prop:3.1.0} (ii) and thus
\begin{align}
\sup_{t > 0} \|w_2(t)\|_{L^\infty(B_r(x_0) \cap \Omega)} < \infty.\notag
\end{align}
The same conclusion can be drawn for $D_x w_2(x,t), D_x^2 w_2(x,t)$ in $(B_r(x_0) \cap \Omega) \times (0,\infty)$. 
Therefore we infer from the Sobolev embedding  that $w_2 \in L^\infty (0,\infty ; C^1(\overline{B_r(x_0) \cap \Omega}))$.
We now turn to the case $w_1$. We consider the fractional operator $(I-\Delta_N)^\alpha$ for $\alpha \in (\frac{5}{6}, 1)$ to utilize the Sobolev embedding (see \cite[Definition 1.4.7, Theorem 1.4.8 and Theorem 1.6.1]{HD1981}). We infer from \cite[Theorem 1.3.4 and Theorem 1.4.3]{HD1981} that
\begin{align*}
\sup_{t > 0}\|w_1(t)\|_{C^1(\overline{\Omega})} &\leq \sup_{t > 0}\|(I-\Delta_N)^\alpha w_1(t)\|_{L^3(\Omega)}\\
&\leq \sup_{t > 0} \int_0^t \|(I-\Delta_N)^\alpha e^{(t-s)(\Delta_N-1)} v_1(s)\|_{L^3(\Omega)} ds\\
&\leq \sup_{t > 0} \int_0^t C (t-s)^{-\alpha} e^{-\delta (t-s)}\|v_1(s)\|_{L^3(\Omega)} ds,
\end{align*}
where $\delta \in (0,1)$.
By taking a sufficiently small $r$, we can obtain 
\begin{align*}
\sup_{t > 0}\|v_1(t)\|_{L^3(\Omega)} \leq \sup_{t>0} \iB v^3 \varphi dx < \infty
\end{align*}
owing to \eqref{eq:3.7.5}. Therefore it holds that
\begin{align*}
\sup_{t > 0}\|w_1(t)\|_{C^1(\overline{\Omega})} < \infty.
\end{align*}
Hence, the linearity of the homogeneous equation yields our claim \eqref{eq:3.7.12.1}.

\medskip
\textbf{(Step 4)} We finally show the conclusion using the Alikakos--Moser iteration scheme (see \cite{A1979}).
Multiplying $u^p \varphi^{p+1}$ ($p \ge 1$) by the first equation of \eqref{p}, we get
\begin{align}
\dfrac{d}{dt}\dfrac{1}{p+1}\iB |u\varphi|^{p+1} dx &= -\iB \nabla u \cdot \nabla (u^p \varphi^{p+1}) dx + \iB u \nabla w \cdot \nabla (u^p\varphi^{p+1}) dx\notag\\
&=: -\textbf{I} + \textbf{II}.\label{eq:3.7.12}
\end{align}
Firstly the term \textbf{I} can be computed as follows:
\begin{align*}
\textbf{I} &= \iB (p u^{p-1}\varphi^{p+1}\nabla u + u^p\nabla (\varphi^{p+1})) \cdot \nabla u dx\\
&= \dfrac{4p}{(p+1)^2}\iB |\nabla (u^{\frac{p+1}{2}})|^2 \varphi^{p+1} dx + \dfrac{1}{p+1}\iB \nabla (u^{p+1}) \cdot \nabla (\varphi^{p+1}) dx\\
&= \dfrac{4p}{(p+1)^2}\iB |\nabla (u^{\frac{p+1}{2}})|^2 \varphi^{p+1} dx + \dfrac{4}{p+1} \iB \varphi^{\frac{p+1}{2}} \nabla (u^{\frac{p+1}{2}}) \cdot \nabla (\varphi^{\frac{p+1}{2}}) u^\frac{p+1}{2} dx\\
&= \left\{\dfrac{4p}{(p+1)^2} - \dfrac{2}{p+1}\right\} \iB |\nabla (u^\frac{p+1}{2})|^2 \varphi^{p+1} dx + \dfrac{2}{p+1}\iB |\nabla (u^\frac{p+1}{2}\varphi^\frac{p+1}{2})|^2 dx\\
&\hspace{0.5cm}- \dfrac{2}{p+1}\iB u^{p+1} | \nabla (\varphi^\frac{p+1}{2})|^2 dx.
\end{align*}
Observing that
\begin{align*}
\dfrac{4p}{(p+1)^2} - \dfrac{2}{p+1} = \dfrac{2(p-1)}{(p+1)^2} \ge 0
\end{align*}
due to $p \ge 1$, we can infer from the H\"{o}lder inequality and Lemma \ref{lem:30} that
\begin{align}
\textbf{I} &\ge \dfrac{2}{p+1}\iB |\nabla (u^\frac{p+1}{2}\varphi^\frac{p+1}{2})|^2 dx - 
\dfrac{2}{p+1}\iB u^{p+1} | \nabla (\varphi^\frac{p+1}{2})|^2 dx\notag\\
&\ge \dfrac{2}{p+1}\iB |\nabla (u^\frac{p+1}{2}\varphi^\frac{p+1}{2})|^2 dx - 
\dfrac{A^2(p+1)}{2}\iB u^{p+1} \varphi^{p-1} \varphi^{2-\frac{2}{n}} dx\notag\\
&= \dfrac{2}{p+1}\iB |\nabla (u^\frac{p+1}{2}\varphi^\frac{p+1}{2})|^2 dx - 
\dfrac{A^2(p+1)}{2}\iB |u\varphi|^{p+1-\frac{2}{n}} u^\frac{2}{n} dx\notag\\
&\ge \dfrac{2}{p+1}\iB |\nabla (u^\frac{p+1}{2}\varphi^\frac{p+1}{2})|^2 dx -
\dfrac{A^2(p+1)}{2} \left( \iB |u\varphi|^{1 + \frac{n}{n-2}p} dx\right)^\frac{n-2}{n}\|u_0\|_{L^1(\Omega)}^\frac{2}{n}.\label{eq:3.7.13}
\end{align}
Secondly as to the term \textbf{II}, \eqref{eq:3.7.12.1} implies that
\begin{align}\textbf{II} &= \iB u \nabla w \cdot \nabla (u^p\varphi^{p+1}) dx\notag\\
&\leq C \iB |u \nabla (u^p\varphi^{p+1})| dx\notag\\
&\leq C\left\{ \dfrac{p}{p+1}\iB \left| \nabla (u\varphi)^{p+1}\right| dx + \iB u^{p+1} \varphi^p |\nabla \varphi| dx\right\}\notag\\
&= C\dfrac{2p}{p+1}\iB (u\varphi)^\frac{p+1}{2}|\nabla (u\varphi)^\frac{p+1}{2}| dx
 + CA \iB (u\varphi)^{p+1 - \frac{1}{n}} u^\frac{1}{n}dx\notag\\
&\leq 2C \iB (u\varphi)^\frac{p+1}{2}|\nabla (u\varphi)^\frac{p+1}{2}| dx
 + CA \iB (u\varphi)^{p+1 - \frac{1}{n}} u^\frac{1}{n} dx\notag\\
&\leq \dfrac{1}{p+1}\iB |\nabla (u\varphi)^\frac{p+1}{2}|^2 dx + C^2(p+1) \iB (u\varphi)^{p+1} dx\notag\\
&\hspace{0.5cm}+ CA\|u_0\|_{L^1(\Omega)}^\frac{1}{n}\left(\iB |u\varphi|^{1 + \frac{n}{n-1}p} dx\right)^\frac{n-1}{n}.\label{eq:3.7.14}
\end{align}
For simplicity of notation, we will write $\uv$ instead of $u\varphi$. Consequently, gathering \eqref{eq:3.7.12}--\eqref{eq:3.7.14} results in 
\begin{align}
&\dfrac{d}{dt}\iB |\uv|^{p+1} dx + \iB |\nabla (\uv)^\frac{p+1}{2}|^2 dx\notag\\
&\leq C(p+1)^2 \iB |\uv|^{p+1} dx\notag\\
&\hspace{0.5cm} + C(p+1)^2 \left\{\left(\iB |\uv|^{1 + \frac{n}{n-2}p} dx\right)^\frac{n-2}{n} + \left(\iB |\uv|^{1+\frac{n}{n-1}p}dx\right)^\frac{n-1}{n}\right\},\label{eq:3.7.15}
\end{align}
where $C > 0$ is a constant independent of $p$.   
Our goal is to pass to the limit and obtain the desired $L^\infty$-bound by controlling carefully the constants on the right-hand side of the above inequality. For this purpose, we shall make much use of the following Gagliardo--Nirenberg inequality: let $q \in [1, \infty)$. Then
\begin{align}
\|z\|_{L^q(\Omega)} \leq K (\|\nabla z\|_{L^2(\Omega)}^2 + \|z\|_{L^2(\Omega)}^2)^{\frac{1-\frac{1}{q}}{2}}\|z\|_{L^1(\Omega)}^\frac{1}{q}\label{eq:3.7.15.1}
\end{align}
for $z \in H^1(\Omega)$, where $K$ is a positive constant independent of $q$. The independence of $K$ from $q$
plays an important role in the subsequent analysis.
In order to use the above inequality for the right-hand side of \eqref{eq:3.7.15}, we need to appropriately determine $q$. Hence, we define $q$ to satisfy 
\begin{align*}
\dfrac{p+1}{2}q = 1 + \dfrac{n}{n-2}p,
\end{align*}
that is 
\begin{align*}
q = \dfrac{2(n-2) + 2np}{(p+1)(n-2)}.
\end{align*}
If $n$ is sufficiently large, it holds $q \in [2,3]$ as is easy to check. Applying the Gagliardo--Nirenberg inequality \eqref{eq:3.7.15.1} with $z = \uv^\frac{p+1}{2}$ and $q$ defined above, we thus have
\begin{align*}
&C(p+1)^2 \left(\iB |\uv|^{1 + \frac{n}{n-2}p} dx\right)^\frac{n-2}{n}\\
&=C(p+1)^2 \left(\iB |\uv^\frac{p+1}{2}|^q dx\right)^\frac{\beta}{q}\\
&\leq C(p+1)^2 K^\beta \left(\iB |\nabla (\uv)^\frac{p+1}{2}|^2 dx + \iB |\uv|^{p+1} dx \right)^{\frac{1-\frac{1}{q}}{2}\beta} \left(\iB |\uv|^\frac{p+1}{2} dx\right)^\frac{\beta}{q}\\
&\leq C(K+1)^2(p+1)^2 \left(\iB |\nabla (\uv)^\frac{p+1}{2}|^2 dx + \iB |\uv|^{p+1} dx + 1\right)^{\frac{q-1}{q}} \left(\iB |\uv|^\frac{p+1}{2} dx + 1\right)^\frac{2}{q},
\end{align*}
where $\beta = q \frac{n-2}{n} = \frac{2(n-2) + 2np}{n(p+1)} \leq 2$.
Since $ q \in [2,3]$, we can deduce from the Young inequality that
\begin{align}
&C(p+1)^2 \left(\iB |\uv|^{1 + \frac{n}{n-2}p} dx\right)^\frac{n-2}{n}\notag\\
&\leq \dfrac{1}{6}\times\dfrac{q-1}{q}\left(\iB |\nabla(\uv)^\frac{p+1}{2}|^2 dx + \iB |\uv|^{p+1} dx + 1\right)\notag\\
&\hspace{0.5cm}+ \left[C(K+1)^2(p+1)^2\right]^q \times 6^{q-1} \dfrac{1}{q}\left(\iB |\uv|^\frac{p+1}{2} dx + 1\right)^2\notag\\
&\leq \dfrac{1}{6}\left(\iB |\nabla(\uv)^\frac{p+1}{2}|^2 dx + \iB |\uv|^{p+1} dx + 1\right)
+ C(p+1)^6 \left(\iB |\uv|^\frac{p+1}{2} dx + 1\right)^2.\label{eq:3.7.16}
\end{align}
Here, we remark that the positive constant $C$ in the above inequality is independent of $p,q$ thanks to the inequality: $\frac{6^{q-1}}{q} \leq \frac{6^{3-1}}{2}$.
Applying the same manner for $\tilde{q}$ satisfying
\begin{align*}
\dfrac{p+1}{2}\tilde{q} = 1 + \dfrac{n}{n-1}p,
\end{align*} 
we have
\begin{align}
&C(p+1)^2 \left(|\uv|^{1 + \frac{n}{n-1}p} dx \right)^\frac{n-1}{n}\notag\\
&= C(p+1)^2 \left(\iB |\uv^\frac{p+1}{2}|^{\tilde{q}} dx\right)^\frac{\gamma}{\tilde{q}}\notag\\
&\leq C(K+1)^2(p+1)^2\left(\iB |\nabla (\uv)^\frac{p+1}{2}|^2 dx + \iB |\uv|^{p+1} dx + 1\right)^\frac{\tilde{q}-1}{\tilde{q}} \left(\iB |\uv|^\frac{p+1}{2} dx + 1\right)^\frac{2}{\tilde{q}}\notag\\
&\leq \dfrac{1}{6}\left(\iB |\nabla (\uv)^\frac{p+1}{2}|^2 dx + \iB |\uv|^{p+1} dx + 1\right) 
+ C(p+1)^6 \left(\iB |\uv|^\frac{p+1}{2} dx + 1\right)^2,
\end{align}
where $\tilde{q} \in [2,3]$ and $\gamma = \frac{2(n-1) + 2np}{n(p+1)} \leq 2$.\label{eq:3.7.17}
In addition, using the Gagliardo--Nirenberg inequality \eqref{eq:3.7.15.1} as $z = \uv^\frac{p+1}{2}$ and $q = 2$, we get
from the similar argument above that
\begin{align}
&C(p+1)^2 \iB |\uv|^{p+1} dx\notag\\
&= C(p+1)^2 \left(\iB |\uv^\frac{p+1}{2}|^2 dx\right)^{\frac{1}{2} \times 2}\notag\\
&\leq \dfrac{1}{6}\left(\iB |\nabla(\uv)^\frac{p+1}{2}|^2 dx + \iB |\uv|^{p+1} dx\right)
 + C(K+1)^2(p+1)^4\left(\iB |\uv|^\frac{p+1}{2} dx\right).\label{eq:3.7.18}
\end{align}
Combining \eqref{eq:3.7.15} and \eqref{eq:3.7.16}--\eqref{eq:3.7.18} yields
\begin{align*}
&\dfrac{d}{dt}\iB |\uv|^{p+1} dx + \dfrac{1}{2}\iB |\nabla (\uv)^\frac{p+1}{2}|^2 dx\\
&\leq \dfrac{1}{2}\iB |\uv|^{p+1} dx + C(p+1)^6 \left(\iB |\uv|^\frac{p+1}{2} dx + 1\right)^2.
\end{align*}
Applying \eqref{eq:gn2} with $\varepsilon = \frac{1}{2}$ and $z = \uv^\frac{p+1}{2}$, we can assert that
\begin{align*}
\dfrac{d}{dt} \iB |\uv|^{p+1} dx + \dfrac{1}{2}\iB |\uv|^{p+1} dx \leq C(p+1)^6 \left(\iB |\uv|^\frac{p+1}{2} dx + 1\right)^2,
\end{align*}
where $C$ is a positive constant independent of $p$. We are now in a position to show the conclusion. The above inequality gives that for $t \in (0,T)$ with some $T > 0$,
\begin{align*}
\iB |\uv|^{p+1} dx + 1 \leq \left(\iB u_0^{p+1} \varphi^{p+1} dx + 1\right) + \sup_{t \in (0,T)}\left(\iB |\uv|^\frac{p+1}{2} dx + 1\right)^2 \times C(p+1)^6.
\end{align*}
Setting $2^{k+1} := p+1$ and
\begin{align*}
\Phi_k := \sup_{t \in (0,T)} \left(\iB |\uv|^{2^k} dx +1 \right),
\end{align*}
we evaluate as follows:
\begin{align*}
\Phi_{k+1} &\leq \|u_0\|_{L^\infty(\Omega)}^{p+1}|\Omega| + 1 + C(p+1)^6\Phi_k^2\\
&\leq C\max\{(\|u_0\|_{L^\infty(\Omega)} + 1)^{2^{k+1}}, 2^{6(k+1)}\Phi_k^2\}\\
&\leq C 2^{6(k+1)} \max\{\Phi_k^2, (\|u_0\|_{L^\infty(\Omega)} + 1)^{2^{k+1}}\}
\end{align*}
for $k \ge 1$. By induction on $k$, we obtain 
\begin{align*}
\Phi_{k+1} \leq C^{\sum_{l=1}^k 2^{l-1}} 2^{\sum_{l=1}^k 6(l+1)2^{k-l}}\max\{\Phi_1^{2^k}, (\|u_0\|_{L^\infty(\Omega)} + 1)^{2^{k+1}})\}
\end{align*}
and thus
\begin{align*}
\sup_{t \in (0,T)}\left(\iB |\uv|^{2^{k+1}} dx\right)^\frac{1}{2^{k+1}} 
&\leq \Phi_{k+1}^\frac{1}{2^{k+1}}\\
&\leq C^\frac{2^k -1}{2^{k+1}} 2^{6\sum_{l=1}^k \frac{(l+1)2^{k-l}}{2^{k+1}}}\max\{\Phi_1^\frac{2^k}{2^{k+1}}, (\|u_0\|_{L^\infty(\Omega)} + 1)\}\\
&\leq C^\frac{2^k -1}{2^{k+1}} 2^{6\sum_{j=0}^\infty j 2^{-j}} \max\{\Phi_1^\frac{1}{2}, (\|u_0\|_{L^\infty(\Omega)} + 1)\}.
\end{align*}
Letting $k \to \infty$, we can conclude from \eqref{eq:3.7.5} that
\begin{align*}
\sup_{t \in (0,T)}\|\uv(t)\|_{L^\infty(\Omega)} &\leq C\max\{\Phi_1^\frac{1}{2}, (\|u_0\|_{L^\infty(\Omega)} + 1)\}\\
&\leq C \max\left\{\left(\sup_{t > 0}\iB u^2 \varphi dx + 1\right)^\frac{1}{2}, (\|u_0\|_{L^\infty(\Omega)} + 1)\right\}\\
&\leq C,
\end{align*} 
where $C$ is independent of $T$.
By the definition of $u_\varphi$ and Lemma \ref{lem:30}, it holds that
\begin{align*}
\sup_{t \in (0,T)} \|u(t)\|_{L^\infty(B_r(x_0) \cap \Omega)} \leq C,
\end{align*}
which completes the proof by pushing $T \to \infty$.
\end{proof}

\subsection{Proof of Theorem \ref{th:2}}\label{subsec:3.4}
The following proposition ensures that the localized mass is always lifted from below near grow-up points, which gives Theorem \ref{th:2}. The basic idea of the proof is an $\varepsilon$-regularity theorem, meaning that if the mass is less than sufficiently small locally in space, solutions achieve uniform boundedness in time locally in space.
\begin{proposition}\label{prop:5.1}
Let $x_0 \in \mathcal{G}$ and $0 < r \ll 1$. Then
\begin{align}
\limsup_{t \to \infty} \int_{B_r(x_0) \cap \Omega} u(x,t) dx \ge \dfrac{1}{200K^2_{\mathrm{Sob}}},\label{eq:4.1}
\end{align}
where $K_\mathrm{Sob}$ is the positive constant in the inequality \eqref{sobine:1}.
\end{proposition}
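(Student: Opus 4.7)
The plan is to argue by contradiction, leveraging the small-mass regularity machinery already developed in Corollary~\ref{rem:4} and Proposition~\ref{prop:3.11}. Suppose that $\limsup_{t\to\infty}\int_{B_r(x_0)\cap\Omega} u(x,t)\,dx < 1/(200 K_{\mathrm{Sob}}^2)$. I will pick $\rho > 0$ with $4\rho < r$, so that there exists $t_0 > 0$ with $\int_{B_{4\rho}(x_0)\cap\Omega} u(x,t)\,dx < 1/(200 K_{\mathrm{Sob}}^2)$ for every $t \ge t_0$, and fix a cutoff $\varphi = \varphi_{(x_0,2\rho,n)}$ as in Lemma~\ref{lem:30} with $n$ large, so that $\varphi \equiv 1$ on $B_{2\rho}(x_0)$ and $\mathrm{supp}\,\varphi \subset B_{4\rho}(x_0)$.

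The core step is the absorption. Applying Lemma~\ref{lem:3.3}(i) with $z = u$ gives, for all $t \ge t_0$,
\begin{align*}
\iB u^2 \varphi\,dx \le 2K_{\mathrm{Sob}}^2 \cdot \tfrac{1}{200K_{\mathrm{Sob}}^2}\iB \tfrac{|\nabla u|^2}{u}\varphi\,dx + C = \tfrac{1}{100}\iB \tfrac{|\nabla u|^2}{u}\varphi\,dx + C.
\end{align*}
Multiplying by $25$ and inserting into the differential inequality of Corollary~\ref{rem:4}, the source $25\int u^2\varphi\,dx$ is absorbed into one third of the dissipation term, leaving
\begin{align*}
\tfrac{d}{dt}\tilde E(t) + \tilde E(t) + \tfrac{1}{2}\iB \tfrac{|\nabla u|^2}{u}\varphi\,dx + \tfrac{1}{20}\iB |w_t|^2\varphi\,dx \le C'
\end{align*}
for $t \ge t_0$. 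The scalar comparison $\frac{d}{dt}\tilde E \le -\tilde E + C'$ then yields $\sup_{t \ge t_0}\tilde E(t) < \infty$, and continuity of the solution on $[0,t_0]$ (Proposition~\ref{pro:loc1}) upgrades this to $\sup_{t > 0} \iB u\log u\,\varphi\,dx < \infty$.

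Because $\varphi \equiv 1$ on $B_{2\rho}(x_0)$ and $u\log u \ge -1/e$ pointwise, the bound transfers to $\sup_{t > 0}\int_{B_{2\rho}(x_0)\cap\Omega} u\log u\,dx < \infty$. Proposition~\ref{prop:3.11} applied with radius $\rho$ then delivers $\sup_{t > 0}\|u(t)\|_{L^\infty(B_\rho(x_0)\cap\Omega)} < \infty$, which contradicts $x_0 \in \mathcal G$: by definition of a grow-up point we can pick $t_k \to \infty$ and $x_k \to x_0$ with $u(x_k,t_k) \to \infty$, and eventually $x_k \in B_\rho(x_0)$, forcing $u(x_k,t_k)$ to stay bounded. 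This contradiction establishes \eqref{eq:4.1}.

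The main delicacy, and the source of the explicit numerical constant $1/(200K_{\mathrm{Sob}}^2)$, is the precise balance in the absorption: the factor $2K_{\mathrm{Sob}}^2$ from Lemma~\ref{lem:3.3}(i), multiplied by the local mass and by the coefficient $25$ coming from Lemma~\ref{lem:3.5} after the choice $\varepsilon = 5/2$, must strictly fit inside the dissipation coefficient $3/4$ with room to spare. All coupling with the $v$ and $w_t$ contributions has already been packaged into the localized Lyapunov-type quantity $\tilde E$ during the derivation of Corollary~\ref{rem:4}, so the only remaining source driving $\tilde E$ upward is $25\int u^2\varphi\,dx$, which the smallness hypothesis is calibrated to defeat.
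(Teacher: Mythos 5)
Your proposal is correct and follows essentially the same contradiction argument as the paper: absorb $25\int u^2\varphi\,dx$ into the dissipation term of Corollary~\ref{rem:4} via Lemma~\ref{lem:3.3}(i) under the small-mass hypothesis, deduce a uniform $L\log L$ bound, invoke Proposition~\ref{prop:3.11}, and contradict $x_0 \in \mathcal{G}$. You are in fact slightly more careful than the paper on one point: the paper fixes $\varphi=\varphi_{(x_0,r,n)}$, whose support is $B_{2r}(x_0)$, so Lemma~\ref{lem:3.3}(i) actually produces the factor $\int_{B_{2r}(x_0)\cap\Omega}u\,dx$ rather than $\int_{B_r(x_0)\cap\Omega}u\,dx$ as the hypothesis supplies; your choice of $\varphi=\varphi_{(x_0,2\rho,n)}$ with $4\rho<r$ keeps the support of $\varphi$ inside $B_r(x_0)$ and closes that gap cleanly, at the harmless cost of concluding boundedness only on $B_\rho(x_0)$ (which suffices for the contradiction, just as the paper's $B_{r/2}(x_0)$ does).
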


\begin{proof}
Give $\varphi = \varphi_{(x_0,r,n)}$ introduced in Lemma \ref{lem:30} and suppose the assertion of this proposition is false. Namely, we assume that it holds that for sufficiently large time $t$,
\begin{align*}
\int_{B_r(x_0) \cap \Omega} u(x,t) dx < \dfrac{1}{200K_{\mathrm{Sob}}^2}.
\end{align*} 
Combining the above estimate with Lemma \ref{lem:3.3} (i), we can deduce that for sufficiently large time $t$,
\begin{align*}
25\iB u^2 \varphi dx &< \dfrac{50K_{\mathrm{Sob}}^2}{200K_{\mathrm{Sob}}^2}\iB \dfrac{|\nabla u|^2}{u}\varphi dx + 25 K_{\mathrm{Sob}}^2\left(\dfrac{A^2}{2} + 1\right)\|u_0\|_{L^1(\Omega)}^2\\
&= \dfrac{1}{4}\iB \dfrac{|\nabla u|^2}{u}\varphi dx + 25 K_{\mathrm{Sob}}^2\left(\dfrac{A^2}{2} + 1\right)\|u_0\|_{L^1(\Omega)}^2.
\end{align*}
Therefore, Corollary \ref{rem:4} leads to 
\begin{align*}
\dfrac{d}{dt}\tilde{E}(t) + \tilde{E}(t) \leq C
\end{align*}
for sufficiently large $t$ and thus
\begin{align*}
\sup_{t > 0} \int_{B_r(x_0) \cap \Omega} u\log u dx < \infty.
\end{align*}
Owing to Proposition \ref{prop:3.11}, we have
\begin{align*}
\sup_{t > 0} \|u(t)\|_{L^\infty(B_{\frac{r}{2}}(x_0) \cap \Omega)} < \infty.
\end{align*}
This contradicts our assumption that $x_0$ is a grow-up point. 
\end{proof}

\begin{proof}[Proof of Theorem \ref{th:2}]
Since the initial data $(u_0,v_0,w_0)$ is a triplet of radially symmetric functions, the corresponding solution $(u,v,w)$ of \eqref{p} is
also radially symmetric. Suppose there exists a grow-up point $x_0 \not= 0$. We give $m \in \N$ satisfying
\begin{align*}
200K^2_{\mathrm{Sob}}\|u_0\|_{L^1(\Omega)} + 1 \leq m
\end{align*}
and furthermore take $0 < r \ll 1$ and $\{x_i\}_{i = 1}^{i = m} \subset \overline{\Omega}$ such that
\begin{align*}
|x_0| = |x_i|\ (1 \leq i \leq m),\quad B_r(x_i) \cap B_r(x_j) = \emptyset\ \mathrm{for}\ i\not= j.
\end{align*}
Now we have
\begin{align*}
\int_{B_r(x_i) \cap \Omega} u(x,t) dx = \int_{B_r(x_0) \cap \Omega} u(x,t) dx
\end{align*}
for $t \in (0,\infty)$ and $1 \leq i \leq m$ due to the radial symmetry. Here, it follows from \eqref{eq:4.1} that there exists
$\{t_k\} \subset (0,\infty)$ such that $t_k \to \infty$ as $k \to \infty$ and 
\begin{align*}
\int_{B_r(x_0) \cap \Omega} u(x,t_k) dx \ge \dfrac{1}{200K_{\mathrm{Sob}}^2}.
\end{align*}
Consequently, it holds from \eqref{pro:3.1.1}
that
\begin{align*}
\dfrac{m}{200K_{\mathrm{Sob}}^2} < \sum_{i =1}^m \int_{B_r(x_i) \cap \Omega} u(x,t_k) dx \leq \iB u(x,t_k) dx = \|u_0\|_{L^1(\Omega)} \leq \dfrac{m-1}{200K_{\mathrm{Sob}}^2},
\end{align*}
which is not reasonable. We thus conclude that the grow-up point is the only origin.
\end{proof}

\subsection{Proof of Theorem \ref{th:1}}\label{subsec:3.5}
In this subsection, we study the more detailed behavior of the radially symmetric grow-up solution $(u,v,w)$ to \eqref{p} near the grow-up point $x_0 = 0$. We first give a higher regularity of $(u,v,w)$ in the area excluding the origin. We present the following lemma by employing the properties of the Neumann heat semigroup. To shorten, we will write $B_r$ instead of $B_r(0)$ for $ 0 < r \ll 1$.

\begin{lemma}\label{lem:6.1}
Let $(u,v,w)$ be a radially symmetric grow-up solution of \eqref{p}. Then for any $0 < r \ll 1$ and $\alpha \in (0,1)$, there exists a positive constant $C(r,\alpha)$ such that
\begin{align}
\|(u,w)\|_{C^{2+\alpha, 1 + \frac{\alpha}{2}}((\overline{\Omega} \setminus B_r) \times (0,\infty))} \leq C(r,\alpha),\quad \|v\|_{C^{\alpha, 1+\frac{\alpha}{2}}((\overline{\Omega} \setminus B_r) \times (0,\infty))} \leq C(r,\alpha).\label{eq:6.0.1}
\end{align}
\end{lemma}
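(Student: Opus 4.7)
\medskip
\textbf{Proof plan.} The strategy is to first upgrade Theorem \ref{th:2} into a uniform-in-time $L^\infty$ bound for $u$ on $\overline{\Omega}\setminus B_{r/2}$, and then to run a standard parabolic bootstrap through the three equations of \eqref{p}. Fix $0<r\ll 1$. By Theorem \ref{th:2}, every point $x_0\in\overline{\Omega}\setminus B_{r/2}$ lies outside $\mathcal{G}$. By the definition of a grow-up point together with the continuity $u\in C([0,\infty);C(\overline{\Omega}))$ from Proposition \ref{pro:loc1}, for each such $x_0$ one finds a neighborhood $U_{x_0}$ of $x_0$ in $\overline{\Omega}$ and a constant $M_{x_0}$ with $\sup_{t\ge 0}\|u(\cdot,t)\|_{L^\infty(U_{x_0})}\le M_{x_0}$; otherwise one could extract sequences $x_k\to x_0$ and $t_k\to\infty$ with $u(x_k,t_k)\to\infty$, contradicting $x_0\notin\mathcal{G}$, while joint continuity up to $t=0$ on compact subsets takes care of the initial time segment. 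Covering the compact set $\overline{\Omega}\setminus B_{r/2}$ by finitely many such $U_{x_0}$'s yields
\begin{align*}
M_r:=\sup_{t\ge 0}\|u(\cdot,t)\|_{L^\infty(\overline{\Omega}\setminus B_{r/2})}<\infty.
\end{align*}

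With $u$ uniformly bounded away from the origin, the representation $v(x,t)=e^{-t}v_0(x)+\int_0^t e^{-(t-s)}u(x,s)\,ds$ immediately gives $v\in L^\infty((\overline{\Omega}\setminus B_{r/2})\times(0,\infty))$. For $w$ I would split the source as $v=v\chi_{B_{r/4}}+v\chi_{\Omega\setminus B_{r/4}}$ exactly as in Step~2 of the proof of Proposition \ref{prop:3.11}: the first piece is only $L^1$-bounded (by Proposition \ref{prop:3.1.0}), but the pointwise Gaussian bound on the Neumann heat kernel forces its contribution $w_1$ to lie in $C^k(\overline{\Omega}\setminus B_{r/2})$ for every $k$, uniformly in $t$, since every source point is at distance $\gtrsim r$ from the evaluation region; the second piece is $L^\infty$-bounded, so fractional power estimates of the Neumann heat semigroup (as in Step~2 of Proposition \ref{prop:3.11}) give $w_2\in L^\infty(0,\infty;W^{2,p}(\Omega))$ for any $p\in(1,\infty)$. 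Hence $w\in L^\infty(0,\infty;C^{1+\alpha}(\overline{\Omega}\setminus B_{r/2}))$ for any $\alpha\in(0,1)$.

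With $\nabla w$ bounded away from the origin, the first equation rewritten as $u_t=\Delta u-\nabla u\cdot\nabla w-u\Delta w$ is a uniformly parabolic linear equation for $u$ with $L^\infty$ coefficients on $(\overline{\Omega}\setminus B_{r/2})\times(0,\infty)$. The De~Giorgi--Nash--Moser theorem, applied on parabolic cylinders of unit height strictly contained in this region (with boundary regularity at $\partial\Omega$ handled by the Neumann condition), yields $u\in C^{\alpha,\alpha/2}$ uniformly on $(\overline{\Omega}\setminus B_{5r/8})\times(0,\infty)$ for some $\alpha\in(0,1)$. The ODE $v_t=-v+u$ then promotes $v$ to $C^{\alpha,1+\alpha/2}$ on the same set by direct integration. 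Classical parabolic Schauder theory \cite{LSU1968} applied to $w_t=\Delta w-w+v$ upgrades $w$ to $C^{2+\alpha,1+\alpha/2}$ on $(\overline{\Omega}\setminus B_{3r/4})\times(0,\infty)$, and a final application of Schauder estimates to the first equation---now with H\"older-continuous coefficients $\nabla w$ and $\Delta w$---gives $u\in C^{2+\alpha,1+\alpha/2}$ on $(\overline{\Omega}\setminus B_r)\times(0,\infty)$, which is \eqref{eq:6.0.1}.

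The main technical obstacle is producing \emph{uniform-in-time} Schauder bounds rather than merely local-in-time ones. I would handle this in the customary way: run the Schauder estimates on parabolic cylinders of unit height $[t_0,t_0+1]$, treating $u(\cdot,t_0)$ as initial data, and use that the input data ($M_r$ and the uniform bounds on $w$ and $v$ from the preceding paragraph) are all $t_0$-independent. The resulting Schauder constants then depend only on the cylinder geometry and on these uniform norms, hence are independent of $t_0$. A secondary nuisance is that $\partial B_r$ is an artificial boundary rather than a physical one, but this is dissolved by always estimating inside a slightly larger annulus, as encoded in the nested radii $r/2<5r/8<3r/4<r$ above.
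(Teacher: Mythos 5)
Your proof is correct in outline and arrives at the same conclusion, but it runs through a genuinely different set of tools than the paper. Both begin identically: use Theorem~\ref{th:2} plus a compactness argument to upgrade the absence of grow-up points outside $B_{r/2}$ to a uniform-in-time $L^\infty$ bound $M_r$ for $u$, bound $v$ by integrating the ODE, and bound $w$ (with one derivative) by the near/far kernel splitting from Step~2 of Proposition~\ref{prop:3.11}. From there the routes diverge. The paper multiplies each equation by a cut-off $\psi = 1-\varphi$ and rewrites the result as an inhomogeneous Cauchy problem for $u\psi$ (resp.\ $w\psi$) posed on \emph{all of $\Omega$} with the physical Neumann condition, where the sources $f\psi+2u\nabla\psi$ and $u\Delta\psi + f\cdot\nabla\psi$ are uniformly bounded; it then invokes Lunardi's optimal-regularity theorems (Theorems 5.1.17 and 5.1.20 of \cite{LA2015}) for the Neumann semigroup. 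You instead work directly on the annulus and apply De Giorgi--Nash--Moser to get $C^{\alpha,\alpha/2}$, then bootstrap through classical Schauder theory \`a la \cite{LSU1968}. The paper's cut-off device is precisely what dissolves the ``artificial boundary'' nuisance you flag at the end---after multiplying by $\psi$ there is no annular boundary at all, only $\partial\Omega$ with its honest Neumann condition---so the two approaches trade one technicality for another.

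Two small points to tighten in your version. First, the fractional-power estimate for $w_2$ does not give $w_2 \in L^\infty(0,\infty;W^{2,p}(\Omega))$: the time-convolution $\int_0^t(t-s)^{-\gamma}e^{-\delta(t-s)}\,ds$ converges only for $\gamma<1$, so you obtain $D\bigl((I-\Delta_N)^\gamma\bigr)$-bounds uniformly in $t$ for every $\gamma<1$, hence $C^{1+\beta}$ by Sobolev embedding for $\beta$ arbitrarily close to $1$, but not full $W^{2,p}$. Second, and related, once you only know $\nabla w\in L^\infty$ (not $\Delta w$), your rewriting $u_t=\Delta u-\nabla u\cdot\nabla w-u\Delta w$ is not yet an equation with $L^\infty$ coefficients; you should keep the divergence form $u_t=\nabla\cdot(\nabla u-u\nabla w)$, for which De Giorgi--Nash--Moser with bounded drift $\nabla w$ applies cleanly. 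Neither gap is fatal, but as written the coefficient bound you appeal to is not established.
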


\begin{proof}
It holds from the definition of the grow-up point and Theorem \ref{th:2} that for any $0 < r \ll 1$, there exists a  positive constant $C(r)$ depending on $r$ such that
\begin{align}
\sup_{t > 0}\|u(t)\|_{L^\infty(\Omega \setminus B_r)} \leq C(r).\label{eq:6.1}
\end{align}
Moreover, solving the second equation of \eqref{p} for $v$ yields
\begin{align}
\sup_{t > 0}\|v(t)\|_{L^\infty(\Omega \setminus B_r)} \leq C(r)\label{eq:6.1.5}
\end{align}
for $0 < r \ll 1$. We can now proceed analogously to Step 2 in the proof of Proposition \ref{prop:3.11}, which implies that
\begin{align}
\sup_{t > 0} \|w(t)\|_{C^1(\overline{\Omega} \setminus B_{2r})} \leq C(r).\label{eq:6.2}
\end{align}
Let $\varphi = \varphi_{(0,2r,n)}$ be as in Lemma \ref{lem:30} and $\psi$ be defined by $\psi = 1 - \varphi$. Then $\psi \in C^\infty(\R^2)$ satisfies 
\begin{align*}
  &\psi(x)=
  \begin{cases}
    1\quad \mathrm{in}\ \Omega \setminus B_{4r}, \\
    0\quad \mathrm{in}\ B_{2r},
  \end{cases}\\
  &0 \leq \psi \leq 1\ \mathrm{in}\ \R^2,\\
  &\dfrac{\partial \psi}{\partial \nu} = 0\ \mathrm{on}\ \partial \Omega.
\end{align*}
Multiplying the first equation of \eqref{p} by $\psi$, we have
\begin{align*}
(u\psi)_t &= (\Delta u )\psi - (\nabla \cdot f)\psi\\
&=\Delta (u\psi) - 2\nabla u \cdot \nabla \psi - u\Delta \psi - \nabla \cdot (f \psi) + f \cdot \nabla \psi\\
&= \Delta (u\psi) - \nabla \cdot (f\psi + 2u\nabla \psi) + u\Delta \psi + f \cdot \nabla \psi
\end{align*}
with the notation $f := u \nabla w \in L^\infty(0,\infty ; L^\infty(\Omega \setminus B_{2r})^2)$. Owing to \eqref{eq:6.1} and \eqref{eq:6.2}, we obtain
\begin{align*}
f\psi + 2u \nabla \psi \in L^\infty(0,\infty ; L^\infty(\Omega)^2),\quad u\Delta \psi + f\cdot \nabla \psi \in L^\infty(0,\infty ; L^\infty(\Omega)).
\end{align*}
We observe that the map $s \to \|\nabla e^{(t-s)A}\|$ belongs to $L^1(0,t)$ for any $t \in (0,\infty)$, where $A$ denotes the realization of the Laplacian in $L^\infty(\Omega)$ with the $0$-Neumann boundary condition
(see \cite[Chapter 3]{LA2015}). This allows us to apply \cite[Theorem 5.1.17]{LA2015} to the above equation. Therefore, it holds that for $\varepsilon > 0$ and
$\alpha \in (0,1)$, there exists $C(\varepsilon, \alpha) > 0$ such that
\begin{align*}
\|u\|_{C^{\alpha, \frac{\alpha}{2}}((\overline{\Omega} \setminus B_{4r}) \times [\varepsilon,\infty))} &\leq 
C(\varepsilon, \alpha)(\varepsilon^{-\frac{\alpha}{2}}\|u_0\psi\|_{L^\infty(\Omega)} + \|f\psi + 2u \nabla \psi\|_{L^\infty(\Omega)}\\
&\hspace{0.5cm} + \|u\Delta \psi + f\cdot \nabla \psi\|_{L^\infty(\Omega)}).
\end{align*}
This implies that there exists $C(r,\alpha) > 0$ such that
\begin{align*}
\|u\|_{C^{\alpha, \frac{\alpha}{2}}((\overline{\Omega} \setminus B_{4r}) \times (0,\infty))} \leq C(r,\alpha)
\end{align*}
and additionally due to the second equation 
\begin{align}
\|v\|_{C^{\alpha,\frac{\alpha}{2}}((\overline{\Omega} \setminus B_{4r}) \times (0,\infty))} \leq C(r, \alpha)\label{eq:6.1.6}
\end{align}
for any $0 < r \ll 1$.
Likewise, multiplying the third equation by a cut-off function $\psi$ reconstructing to satisfy $\psi = 1$ in $\Omega \setminus B_ {8r}$ and $\psi = 0$ in $B_{4r}$, we infer that
\begin{align}
(w\psi)_t &= \Delta(w\psi) - w\psi -2 \nabla w \cdot \nabla \psi - w\Delta\psi + v\psi\notag\\
&= \Delta (w\psi) - w\psi -2\nabla \cdot (w\nabla \psi) + w\Delta \psi + v\psi.\label{eq:6.1.7}
\end{align}
Now, since \eqref{eq:6.1.5} and \eqref{eq:6.2} yield that
\begin{align*}
w \nabla \psi \in L^\infty(0,\infty; L^\infty(\Omega)^2),\quad w \Delta \psi + v\psi \in L^\infty(0,\infty; L^\infty(\Omega)),
\end{align*}
we apply \cite[Theorem 5.1.17]{LA2015} for the above equation \eqref{eq:6.1.7} again to get
\begin{align*}
\|w\|_{C^{\alpha, \frac{\alpha}{2}}((\overline{\Omega} \setminus B_{8r}) \times (0,\infty))} \leq C(r, \alpha).
\end{align*}
Thanks to the above regularity and \eqref{eq:6.1.6}, by employing \cite[Theorem 5.1.20]{LA2015} to the equation \eqref{eq:6.1.7},
we can deduce that for any 
$\alpha \in (0,1)$,
\begin{align*}
\|w\|_{C^{2+\alpha, 1+\frac{\alpha}{2}}((\overline{\Omega} \setminus B_{16r}) \times (0,\infty))} \leq C(r, \alpha).
\end{align*}
This follows by the above method as in $u$. In addition, since $(u,v,w)$ is a classical solution of \eqref{p}, we infer from the second equation of \eqref{p} that
\begin{align*}
\|v_t\|_{C^{\alpha,\frac{\alpha}{2}}((\overline{\Omega} \setminus B_{4r}) \times (0,\infty))} &=
\|-v + u\|_{C^{\alpha,\frac{\alpha}{2}}((\overline{\Omega} \setminus B_{4r}) \times (0,\infty))}\\
&\leq \|v\|_{C^{\alpha,\frac{\alpha}{2}}((\overline{\Omega} \setminus B_{4r}) \times (0,\infty))} + \|u\|_{C^{\alpha,\frac{\alpha}{2}}((\overline{\Omega} \setminus B_{4r}) \times (0,\infty))}\\
&\leq C(r,\alpha).
\end{align*}
This signifies 
\begin{align*}
\|v\|_{C^{\alpha,1+\frac{\alpha}{2}}((\overline{\Omega} \setminus B_{4r}) \times (0,\infty))} \leq C(r,\alpha).
\end{align*}
Consequently, owing to the arbitrariness of $r$, our claim is proved.
\end{proof}
We already introduced the localized Lyapunov functional $\F_\varphi$ and obtained the equality \eqref{eq:4.0.1}.
In \cite{NSS2000}, since $T_\mathrm{max}$ is finite, the combination of the relation \eqref{eq:4.0.1} and Lemma \ref{lem:6.1} immediately brings about the boundedness in $(0,T_\mathrm{max})$ of the localized Lyapunov functional. However, the same argument as \cite{NSS2000} does not imply that
the localized Lyapunov functional is uniformly bounded in time. 
The key to show Theorem \ref{th:1} is the following lemma,
which means the uniform boundedness in time of the localized Lyapunov functional.
The main idea of the proof, which is not present in \cite{NSS2000}, is to get the estimate as follows:
\begin{align*}
\F_\varphi(t) \leq \F(t) + C,
\end{align*}
where $C$ is a positive constant and $\F$ is the Lyapunov functional in Lemma \ref{lem:3.41}.

\begin{lemma}\label{lem:6.2}
Let $ 0 < r \ll 1$, and $n$ be sufficiently large, and more let $\varphi = \varphi_{(0,r,n)}$ be as in Lemma \ref{lem:30}. Then there exists a positive constant $C$ such that for $ t \in (0,\infty)$,
\begin{align*}
\dfrac{d}{dt}\F_\varphi(t) + \F_\varphi(t) + \D_\varphi (t) \leq C,
\end{align*}
where $\F_\varphi, \D_\varphi$ are introduced in Lemma \ref{lem:3.4}.
In addition, it holds that
\begin{align}
\sup_{t > 0} \F_\varphi (t) < \infty.\label{eq:6.2.1}
\end{align}
\end{lemma}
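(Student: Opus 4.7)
The plan is to combine the identity from Lemma \ref{lem:3.4} with the regularity estimates of Lemma \ref{lem:6.1}, which are valid off the origin (the unique grow-up point by Theorem \ref{th:2}), and then exploit the monotonicity of the full Lyapunov functional $\F$ from Lemma \ref{lem:3.41} to close the argument.

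First I would apply Lemma \ref{lem:3.4} and estimate the right-hand side. Every integrand in $\Re(u,w,\varphi)$ is supported where $\nabla\varphi$ or $\Delta\varphi$ does not vanish, namely in $B_{2r}\setminus B_r$; the same is true for $\frac{d}{dt}\iO u\varphi\,dx$ once one uses the first equation of \eqref{p} and integrates by parts twice (the boundary terms vanish thanks to the Neumann conditions on $u$, $w$ and $\varphi$), yielding $\frac{d}{dt}\iO u\varphi\,dx = \iO u\Delta\varphi\,dx + \iO u\nabla w\cdot\nabla\varphi\,dx$. Since $r>0$ is fixed, Lemma \ref{lem:6.1} supplies uniform-in-time bounds for $u,\nabla u, w,\nabla w, w_t$ on $\overline{\Omega}\setminus B_r$, so both quantities above are dominated by a constant $C$. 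This produces the intermediate differential inequality
\begin{equation*}
\frac{d}{dt}\F_\varphi(t) + \D_\varphi(t) \leq C, \qquad t\in(0,\infty).
\end{equation*}

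The heart of the argument is then the comparison $\F_\varphi(t) \leq \F(t) + C$. Writing
\begin{equation*}
\F(t) - \F_\varphi(t) = \iO u\log u\,(1-\varphi)\,dx - \iO uw(1-\varphi)\,dx + \frac{1}{2}\iO\bigl(|w_t|^2 + |\nabla w|^2 + w^2\bigr)(1-\varphi)\,dx,
\end{equation*}
the pointwise bound $s\log s\ge -1/e$ gives $-\iO u\log u(1-\varphi)\,dx \leq |\Omega|/e$; the positivity of $u,w$ together with Lemma \ref{lem:6.1} (since $\mathrm{supp}(1-\varphi)\subset \overline{\Omega}\setminus B_r$) gives $\iO uw(1-\varphi)\,dx\leq C$; and the last term contributes with a favourable sign. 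Hence $\F_\varphi(t)\leq \F(t)+C$, and since Lemma \ref{lem:3.41} implies that $\F$ is non-increasing, we obtain $\F_\varphi(t)\leq \F(0)+C$, which is precisely \eqref{eq:6.2.1}. Adding this uniform bound for $\F_\varphi$ to the earlier differential inequality produces the desired $\frac{d}{dt}\F_\varphi(t)+\F_\varphi(t)+\D_\varphi(t)\leq C'$.

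The principal obstacle is this uniform-in-time control of $\F_\varphi$. In \cite{NSS2000} the finite blow-up time $T_{\max}<\infty$ trivialises the question, whereas on the infinite interval $(0,\infty)$ the indefinite term $-\iO uw\varphi\,dx$ inside $\F_\varphi$ has no direct control near the grow-up point. The comparison with the full, monotone $\F$ is precisely what by-passes this difficulty, and the resulting truncation error is absorbed thanks to the off-origin regularity granted by Lemma \ref{lem:6.1}.
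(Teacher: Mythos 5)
Your proposal is correct and follows essentially the same route as the paper: start from the identity of Lemma \ref{lem:3.4}, bound the remainder $\Re(u,w,\varphi)$ and $\frac{d}{dt}\iO u\varphi\,dx$ uniformly in time by observing that their integrands are supported in the annulus where Lemma \ref{lem:6.1} gives uniform $C^{2+\alpha}$ control, and then close with the comparison $\F_\varphi(t)\leq\F(t)+C$ together with the monotonicity of $\F$ from Lemma \ref{lem:3.41}. The only cosmetic difference is that you estimate the gap $\F(t)-\F_\varphi(t)$ term by term while the paper bounds $\F_\varphi(t)$ directly; the underlying idea is identical.
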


\begin{proof}
We recall \eqref{eq:4.0.1}:
\begin{align*}
\dfrac{d}{dt}\F_\varphi(t) + \D_\varphi(t) = \dfrac{d}{dt}\iB u\varphi dx + \Re(u,w,\varphi)
\end{align*}
for $t \in (0,\infty)$. Noticing that it follows from the first equation of \eqref{p} that
\begin{align*}
\dfrac{d}{dt}\iB u\varphi dx = - \iB (\nabla u - u \nabla w)\cdot\nabla \varphi dx,
\end{align*}
we can assert that there exists a positive constant $C$ such that
\begin{align*}
\sup_{t > 0}\left|\dfrac{d}{dt}\iB u\varphi dx\right| \leq C, \quad \sup_{t>0}|\Re(u,w,\varphi)| \leq C
\end{align*}
owing to Lemma \ref{lem:30} and \eqref{eq:6.0.1}. We thus have for $t \in (0,\infty)$,
\begin{align*}
\dfrac{d}{dt}\F_\varphi(t) + \D_\varphi(t) \leq C.
\end{align*}
To obtain uniform estimates in time, we add $\F_\varphi(t)$ to the above inequality to evaluate
\begin{align*}
&\dfrac{d}{dt}\F_\varphi(t) + \F_\varphi(t) + \D_\varphi(t)\\
&\leq C + \F_\varphi(t)\\
&= C + \iB (u\log u)\varphi dx + \dfrac{1}{2}\iB (|w_t|^2 + |\nabla w|^2 + w^2)\varphi dx - \iB uw \varphi dx\\
&\leq C + \iB \left(u\log u + \dfrac{1}{e}\right)\varphi dx + \dfrac{1}{2}\iB (|w_t|^2 + |\nabla w|^2 + w^2)\varphi dx - \iB uw \varphi dx\\
&\leq C + \dfrac{1}{e}|\Omega| + \iB u\log u dx + \dfrac{1}{2}\iB (|w_t|^2 + |\nabla w|^2 + w^2) dx - \iB uw \varphi dx.
\end{align*}
Now we focus on the term $- \iB uw \varphi dx$. Invoking \eqref{eq:6.0.1} yields 
\begin{align*}
- \iB uw \varphi dx &= - \int_{B_r}uw\varphi dx - \int_{\Omega \setminus B_r} uw \varphi dx\\
&\leq - \int_{B_r} uw dx + C\\
&= - \iB uw dx + \int_{\Omega \setminus B_r} uw dx + C\\
&\leq - \iB uw dx + C.
\end{align*}
Combining these inequalities and Lemma \ref{lem:3.41}, we conclude that for $ t \in (0,\infty)$,
\begin{align*}
\dfrac{d}{dt}\F_\varphi(t) + \F_\varphi(t) + \D_\varphi(t) &\leq C + \F(t)\\
& \leq C + \F(0).
\end{align*}
The rest of the proof is straightforward so that the lemma follows.
\end{proof}

Here, the following concentration lemma is an inequality modified by \cite{NSS2000} in order to study chemotactic collapse.

\begin{lemma}[{\cite[Lemma 5.3]{NSS2000}}]\label{lem:6.3}
Let $a > 0$, $0 < r \ll 1$, and $\varphi = \varphi_{(0,r,n)}$ be as in Lemma \ref{lem:30}. Then it holds that there exists a positive constant $C$ such that
for $t \in (0,\infty)$,
\begin{align}
\iB e^{aw} \varphi dx \leq C \exp\left(\dfrac{a^2}{16\pi}\iB |\nabla w|^2 \varphi dx\right).\label{eq:6.3.1}
\end{align}

\end{lemma}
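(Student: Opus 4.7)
The plan is to derive this localized bound from the global Moser--Trudinger--Chang--Yang inequality \cite{CY1988,MJ1970}, which provides, for every $f \in H^1(\Omega)$,
\begin{align*}
\iB e^f\,dx \leq C_0\exp\Bigl(\tfrac{1}{16\pi}\iB |\nabla f|^2\,dx + \tfrac{1}{|\Omega|}\iB f\,dx\Bigr),
\end{align*}
with $C_0 = C_0(\Omega)$. For each fixed $t > 0$, I would apply this to the localized function $f := aw(\cdot,t)\varphi \in H^1(\Omega)$, establish an auxiliary estimate on $\iB e^{aw\varphi}\,dx$, and then compare $e^{aw}\varphi$ with $e^{aw\varphi}$ to conclude.

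For the gradient term, expanding $\nabla(w\varphi) = \varphi\nabla w + w\nabla\varphi$ and integrating by parts on the cross term (legitimate since $\partial_\nu\varphi = 0$) gives
\begin{align*}
\iB |\nabla(w\varphi)|^2\,dx = \iB \varphi^2|\nabla w|^2\,dx - \iB w^2\varphi\Delta\varphi\,dx.
\end{align*}
Because $\varphi^2\leq\varphi$, $|\varphi\Delta\varphi|$ is bounded by Lemma \ref{lem:30}, and $\|w(t)\|_{L^2(\Omega)}$ is uniformly bounded in time by \eqref{pro:3.2.1}, this yields $\|\nabla(w\varphi)\|_{L^2(\Omega)}^2 \leq \iB\varphi|\nabla w|^2\,dx + C$. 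Similarly, $\frac{a}{|\Omega|}\iB w\varphi\,dx$ is uniformly bounded thanks to Proposition \ref{prop:3.1.0}(iii). Combining these bounds with Chang--Yang yields the intermediate estimate
\begin{align*}
\iB e^{aw\varphi}\,dx \leq C\exp\Bigl(\tfrac{a^2}{16\pi}\iB \varphi|\nabla w|^2\,dx\Bigr).
\end{align*}

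To replace $e^{aw\varphi}$ by $e^{aw}\varphi$ on the left, I would split
\begin{align*}
\iB e^{aw}\varphi\,dx = \int_{B_r}e^{aw}\,dx + \int_{B_{2r}\setminus B_r} e^{aw}\varphi\,dx.
\end{align*}
On $B_r$ we have $\varphi \equiv 1$, so the first integral equals $\int_{B_r}e^{aw\varphi}\,dx$, which is dominated by the intermediate estimate above. For the annular piece, Theorem \ref{th:2} has already pinned the unique grow-up point at the origin, so Lemma \ref{lem:6.1} delivers a uniform-in-time pointwise bound on $w$ over $\Omega\setminus B_r$; consequently $\int_{B_{2r}\setminus B_r} e^{aw}\varphi\,dx$ is bounded by a constant. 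Absorbing this additive constant into the exponential (which is always $\geq 1$) gives \eqref{eq:6.3.1}.

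The main obstacle is precisely the mismatch between $e^{aw\varphi}$ (the natural output when one localizes Chang--Yang through the weight $\varphi$) and $e^{aw}\varphi$ (the quantity appearing in the claim). On the annulus $\{0 < \varphi < 1\}$ these two cannot be compared pointwise without some a priori $L^\infty$-type control on $w$. What saves the argument is the geometry of the grow-up scenario: that annulus lies in the safe region $\Omega\setminus B_r$, which Lemma \ref{lem:6.1} has already equipped with uniform-in-time $C^1$-regularity of $w$.
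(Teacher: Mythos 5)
Your plan is essentially the right one and I believe it gives a correct proof; since the paper cites \cite[Lemma 5.3]{NSS2000} without reproducing an argument, what you have written is a self-contained derivation rather than a reconstruction of a proof in the text. Two remarks on the details. First, the inequality you state at the outset with the constant $\tfrac{1}{16\pi}$ is not valid for arbitrary $f\in H^1(\Omega)$ on a bounded planar domain (the sharp constant there is $\tfrac{1}{8\pi}$, because the worst concentration is at the boundary). What saves you is precisely that $f=aw\varphi$ is compactly supported in $B_{2r}\subset\Omega$, hence $f\in H^1_0(\Omega)$, and for $H^1_0$ one has $\iB e^f\,dx\le C\exp(\tfrac{1}{16\pi}\|\nabla f\|_{L^2}^2)$ with no mean term needed. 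You should state it for $H^1_0$ and drop the average; this makes the constant $\tfrac{1}{16\pi}$ honest, and it also removes the need to separately bound $\iB aw\varphi\,dx$. Your integration by parts identity $\iB|\nabla(w\varphi)|^2=\iB\varphi^2|\nabla w|^2-\iB w^2\varphi\Delta\varphi$ is correct (the $\partial_\nu\varphi=0$ condition kills the boundary term), and together with $|\varphi\Delta\varphi|\le B\varphi^{2-2/n}\le B$ and the uniform $L^2$-bound \eqref{pro:3.2.1} it gives the intermediate estimate as claimed. Second, for the annular piece you invoke Theorem~\ref{th:2} and Lemma~\ref{lem:6.1}; this works, but it is heavier than necessary, and it silently ties the lemma to the grow-up hypothesis. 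A cleaner and completely local alternative is Proposition~\ref{pro:322}, which gives $w(x,t)\le C(p)(\dots)|x|^{-(2-p)/p}$ for every radially symmetric solution; this yields $\sup_{t>0}\|w(t)\|_{L^\infty(\Omega\setminus B_r)}<\infty$ directly from the preliminaries, without Theorem~\ref{th:2}. You are also right that some such a priori control on $w$ in the transition annulus is indispensable here: without it, the pointwise mismatch between $e^{aw}\varphi$ and $e^{aw\varphi}$ on $\{0<\varphi<1\}$ cannot be repaired, and the inequality in the form stated would actually fail for a general $H^1$-function $w$ concentrated near $\partial B_{2r}$.
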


According to Lemma \ref{lem:6.2} and Lemma \ref{lem:6.3}, we will show the next lemma.
\begin{lemma}
Let $0 < r \ll 1$ and $\varphi = \varphi_{(0,r,n)}$ be as in Lemma \ref{lem:30}. Then it holds that
\begin{align}
\limsup_{t \to \infty} \iB |\nabla w|^2 \varphi dx = + \infty.\label{eq:6.4.1}
\end{align}
\end{lemma}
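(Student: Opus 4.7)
The plan is to argue by contradiction: assume that $\int_\Omega|\nabla w|^2\varphi\, dx$ stays bounded on some tail $[T_0, \infty)$ and derive a uniform-in-time $L^\infty$ bound for $u$ on a neighborhood of the origin. Since $0 \in \mathcal{G}$ by Theorem \ref{th:2}, this is the contradiction we seek. The three ingredients already prepared -- the concentration inequality Lemma \ref{lem:6.3}, the uniform upper bound for the localized Lyapunov functional in Lemma \ref{lem:6.2}, and the Alikakos--Moser iteration Proposition \ref{prop:3.11} -- will be chained together in this order.

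Concretely, assuming $\int_\Omega|\nabla w(t)|^2\varphi\, dx \leq M$ for $t \geq T_0$, Lemma \ref{lem:6.3} yields $\sup_{t \geq T_0}\int_\Omega e^{aw}\varphi\, dx \leq C\exp(a^2 M/(16\pi))$ for every $a > 0$. Next, Lemma \ref{lem:6.2} gives $\F_\varphi(t) \leq C_1$ for all $t > 0$, and discarding the nonnegative squared terms in $\F_\varphi$ produces
\[
\int_\Omega u\log u\,\varphi\, dx - \int_\Omega uw\,\varphi\, dx \leq C_1.
\]

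To convert this into a bound on $\int u\log u\,\varphi\, dx$ alone, I would use the pointwise Fenchel--Young inequality $2uw \leq u\log u - u + e^{2w}$, which is the conjugacy between $s\log s - s$ and $e^s$ evaluated at $a = 2$. Integrating against $\varphi$ and using $u \geq 0$ gives $\int uw\,\varphi\, dx \leq \frac{1}{2}\int u\log u\,\varphi\, dx + \frac{1}{2}\int e^{2w}\varphi\, dx$, and absorbing the $u\log u$ term back to the left produces $\sup_{t \geq T_0}\int_\Omega u\log u\,\varphi\, dx < \infty$. The short interval $[0, T_0]$ is controlled by the regularity in Proposition \ref{pro:loc1}, so the bound is uniform on $(0, \infty)$. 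Choosing $r_0 \in (0, r/2]$ and using $\varphi \equiv 1$ on $B_{2r_0}(0)$ together with $u\log u + 1/e \geq 0$ transfers the bound to $\sup_{t > 0}\int_{B_{2r_0}(0) \cap \Omega} u\log u\, dx < \infty$, and Proposition \ref{prop:3.11} then yields $\sup_{t > 0}\|u(t)\|_{L^\infty(B_{r_0}(0) \cap \Omega)} < \infty$, contradicting the defining property of the grow-up point $0$.

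The main obstacle is the Fenchel--Young absorption step: the Lyapunov estimate only controls the signed difference $u\log u - uw$, so one must pick a conjugate parameter $a > 1$ so that the coefficient $1/a$ in front of $\int u\log u\,\varphi\, dx$ is strictly less than one, while simultaneously ensuring that $\int e^{aw}\varphi\, dx$ stays finite at the same $a$. Both are compatible only because Lemma \ref{lem:6.3} gives exponential integrability at every $a > 0$, and because the contradiction hypothesis supplies a uniform-in-$t$ (not merely subsequential) bound on $\int|\nabla w|^2\varphi\, dx$ on the tail; a subsequential bound would not allow the Fenchel--Young absorption to close at every large time.
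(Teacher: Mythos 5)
Your proposal is correct and is essentially the paper's argument run as a contrapositive: the paper argues forward from $0\in\mathcal{G}$, combining the contrapositive of Proposition \ref{prop:3.11}, the uniform bound on $\F_\varphi$ from Lemma \ref{lem:6.2}, the generalized Young inequality $xy\leq x\log x+e^{y-1}$ with a free parameter $a>1$, and Lemma \ref{lem:6.3} to reach $\limsup_{t\to\infty}\iB|\nabla w|^2\varphi\,dx=+\infty$. You assume the negation and run the same chain of estimates in reverse (your Fenchel--Young conjugacy at $a=2$ is the same inequality the paper uses), closing with Proposition \ref{prop:3.11} to contradict $0\in\mathcal{G}$; the ingredients and their logical roles are identical, so this is the same proof, only reorganized.
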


\begin{proof}
As a result of \eqref{eq:6.2.1}, we have for $ t \in (0,\infty)$,
\begin{align*}
\iB u\log u\varphi dx \leq \iB uw \varphi dx + C,
\end{align*}
where $C$ is a positive constant. Since the origin is the grow-up point, Proposition \ref{prop:3.11} implies that
\begin{align*}
\limsup_{t \to \infty} \iB uw \varphi = + \infty.
\end{align*}
By the generalized Young inequality: let $x > 0$ and any $y \in \R$, then it holds that
\begin{align*}
xy \leq x \log x + e^{y-1},
\end{align*}
we get for any $a > 0$,
\begin{align*}
a \iB uw \varphi dx &\leq \dfrac{1}{e}\iB e^{aw} \varphi dx + \iB (u\log u)\varphi dx\\
&\leq \iB e^{aw} \varphi dx + \iB (u\log u)\varphi dx\\
&\leq \F_\varphi(t) + \iB uw\varphi dx + \iB e^{aw} \varphi dx\\
&\leq C + \iB uw \varphi dx + \iB e^{aw} \varphi dx
\end{align*}
due to the positivity of $u$ and \eqref{eq:6.2.1}. Taking $a > 1$, we obtain that
\begin{align*}
\limsup_{t \to \infty} \iB e^{aw} \varphi dx = + \infty
\end{align*}
and thus 
\begin{align*}
\limsup_{t \to \infty} \iB |\nabla w|^2 \varphi dx = + \infty
\end{align*}
as a consequence of \eqref{eq:6.3.1}.
\end{proof}

The proof of the following lemma is based on the Jensen inequality and the convexity of $-\log s$ for $s > 0$.

\begin{lemma}[{\cite[Lemma 5.4]{NSS2000}}]
Let $0 < r \ll 1$ and $\varphi = \varphi_{(0,r,n)}$ be as in Lemma \ref{lem:30}. Then it holds that for any $ t \in (0,\infty)$,
\begin{align}
\iB uw \varphi dx \leq \iB (u\log u)\varphi dx + M_\varphi(t) \log \left(\iB e^w \varphi dx\right)
- M_\varphi(t)\log M_\varphi(t),\label{eq:6.5.1}
\end{align}
where $M_\varphi(t) := \iB u(x,t)\varphi(x) dx$.
\end{lemma}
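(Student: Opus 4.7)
The plan is to apply Jensen's inequality to the concave function $\log$ against a probability measure built by renormalizing $u\varphi$. First, I would observe that the classical solution $(u,v,w)$ from Proposition \ref{pro:loc1} is strictly positive (this is standard from the strong maximum principle applied to the first equation of \eqref{p}), and since $\varphi \geq 0$ with $u_0 \not\equiv 0$, the localized mass $M_\varphi(t) = \iB u(x,t)\varphi(x)\, dx$ is strictly positive for every $t \in (0,\infty)$. This legitimizes defining the probability measure
\begin{align*}
d\mu_t := \frac{u(x,t)\varphi(x)}{M_\varphi(t)}\, dx \quad \text{on } \Omega.
\end{align*}

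Next, I would rewrite the quantity on the left-hand side of \eqref{eq:6.5.1} relative to this measure. Using the elementary identity $w - \log u = \log(e^w / u)$, one has
\begin{align*}
\iB uw\varphi\, dx - \iB (u\log u)\varphi\, dx = M_\varphi(t) \iB \log\!\left(\frac{e^w}{u}\right) d\mu_t.
\end{align*}
Since $\log$ is concave and $\mu_t$ is a probability measure, Jensen's inequality gives
\begin{align*}
\iB \log\!\left(\frac{e^w}{u}\right) d\mu_t \leq \log\!\left( \iB \frac{e^w}{u}\, d\mu_t \right) = \log\!\left( \frac{1}{M_\varphi(t)} \iB e^w \varphi\, dx \right).
\end{align*}
Multiplying through by $M_\varphi(t) > 0$ and splitting the logarithm as $\log\!\left(M_\varphi(t)^{-1}\iB e^w\varphi\, dx\right) = \log\!\left(\iB e^w\varphi\, dx\right) - \log M_\varphi(t)$ delivers precisely \eqref{eq:6.5.1}.

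The only points requiring care are the non-degeneracy $M_\varphi(t) > 0$ and the integrability of $e^w\varphi$; the former follows from the strict positivity of $u$, while the latter is trivial since $w$ is continuous on the compact set $\operatorname{supp}\varphi$. I do not expect a serious obstacle: the lemma is essentially a one-line computation once the correct probability measure is identified, which is why the paper simply cites \cite{NSS2000}.
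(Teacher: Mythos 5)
Your proof is correct and matches the approach the paper attributes to this lemma: immediately before the statement, the paper notes that the argument ``is based on the Jensen inequality and the convexity of $-\log s$ for $s>0$,'' which is exactly your use of Jensen with the concave $\log$ against the probability measure $u\varphi/M_\varphi(t)\,dx$.
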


Proposition \ref{prop:5.1} in the previous section provides information about a behavior of the solution $u$. Here, we will improve Proposition \ref{prop:5.1} as a result of the findings in this section. 

\begin{proposition}\label{prop:6.6}
Let $0 < r \ll 1$ and $\varphi = \varphi_{(0,r,n)}$ be as in Lemma \ref{lem:30}. Then  it holds that
\begin{align}
\limsup_{t \to \infty} \iB u\varphi dx \ge 8\pi.\label{eq:6.6}
\end{align}
\end{proposition}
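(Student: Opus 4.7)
The plan is a proof by contradiction based on the assumption $\limsup_{t\to\infty} M_\varphi(t) < 8\pi$, where $M_\varphi(t) := \int_\Omega u(x,t)\varphi(x)\, dx$. Under this hypothesis one can find $\delta>0$ and $T>0$ such that $M_\varphi(t) \leq 8\pi - \delta$ for all $t \geq T$; my aim is to derive $\F_\varphi(t_k) \to +\infty$ along a suitable sequence $t_k \to \infty$, contradicting the uniform upper bound \eqref{eq:6.2.1} from Lemma \ref{lem:6.2}.

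First I would rearrange \eqref{eq:6.5.1} as a lower bound for $\int_\Omega u\log u\,\varphi\, dx$ and substitute it into the definition of $\F_\varphi(t)$. After the $\int_\Omega uw\,\varphi\, dx$ terms cancel and the nonnegative contributions $\tfrac12\int_\Omega(|w_t|^2+w^2)\varphi\, dx$ are discarded, this produces
\begin{align*}
\F_\varphi(t) \geq -M_\varphi(t)\log\!\int_\Omega e^w\varphi\, dx + M_\varphi(t)\log M_\varphi(t) + \frac{1}{2}\int_\Omega |\nabla w|^2\varphi\, dx.
\end{align*}
I would then apply the concentration Lemma \ref{lem:6.3} with $a=1$ to bound $\log\int_\Omega e^w\varphi\, dx \leq \log C + \frac{1}{16\pi}\int_\Omega |\nabla w|^2\varphi\, dx$, and collect terms to arrive at
\begin{align*}
\F_\varphi(t) \geq \left(\frac{1}{2}-\frac{M_\varphi(t)}{16\pi}\right)\int_\Omega |\nabla w|^2\varphi\, dx + M_\varphi(t)\log M_\varphi(t) - M_\varphi(t)\log C.
\end{align*}

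By the mass conservation \eqref{pro:3.1.1}, $M_\varphi(t) \in [0,\|u_0\|_{L^1(\Omega)}]$, so the last two terms are bounded below by a constant independent of $t$ (using $s\log s \geq -1/e$ on $[0,\infty)$). The contradiction hypothesis, on the other hand, forces $\tfrac12 - \tfrac{M_\varphi(t)}{16\pi} \geq \tfrac{\delta}{16\pi} > 0$ for every $t \geq T$. Combining with \eqref{eq:6.4.1}, which furnishes a sequence $t_k \to \infty$ along which $\int_\Omega |\nabla w|^2\varphi\, dx \to \infty$, I would conclude that $\F_\varphi(t_k) \to +\infty$, contradicting \eqref{eq:6.2.1}. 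Hence $\limsup_{t\to\infty} M_\varphi(t) \geq 8\pi$.

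The genuinely difficult work is contained in the lemmas that will be invoked, namely the uniform upper bound \eqref{eq:6.2.1} on the localized Lyapunov functional and the Moser--Trudinger-type concentration estimate \eqref{eq:6.3.1}. Once these are available, the threshold $8\pi$ emerges purely algebraically: it is the largest value of $M_\varphi$ for which the coefficient $\tfrac12 - \tfrac{M_\varphi}{16\pi}$ remains nonnegative when $a=1$ is plugged into the concentration bound. The only delicate checks are that the discarded terms are genuinely nonnegative and that the $M_\varphi\log M_\varphi$ contribution stays under control, both of which are routine.
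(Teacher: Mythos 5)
Your proof is correct and is essentially the same as the paper's: both arguments assume $\limsup M_\varphi < 8\pi$, combine \eqref{eq:6.2.1}, \eqref{eq:6.5.1}, and \eqref{eq:6.3.1} (with $a=1$) to isolate the coefficient $\tfrac12 - \tfrac{M_\varphi}{16\pi}$ in front of $\int_\Omega |\nabla w|^2\varphi\, dx$, and then invoke \eqref{eq:6.4.1} to reach a contradiction. The only cosmetic difference is that you present the final step as $\F_\varphi(t_k)\to+\infty$ contradicting the uniform bound \eqref{eq:6.2.1}, whereas the paper presents it as $\int_\Omega |\nabla w|^2\varphi\, dx$ staying bounded contradicting \eqref{eq:6.4.1}; these are the same inequality read in opposite directions.
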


\begin{proof}
Let us show the desired inequality. 
Suppose the contrary that
\begin{align}
\limsup_{t \to \infty} M_\varphi(t) = \limsup_{t \to \infty} \iB u\varphi dx < 8\pi.\label{eq:6.6.1}
\end{align}
By virtue of \eqref{eq:6.2.1}, \eqref{eq:6.3.1}, and \eqref{eq:6.5.1}, we get
\begin{align*}
&\dfrac{1}{2}\iB (|w_t|^2 + |\nabla w|^2 + w^2)\varphi dx\\
&= \F_\varphi (t) - \iB (u\log u)\varphi dx + \iB uw \varphi dx\\
&\leq M_\varphi(t) \log \left(\iB e^w \varphi dx\right) - M_\varphi (t) \log M_\varphi(t) + C\\
&\leq \dfrac{M_\varphi(t)}{16\pi}\iB |\nabla w|^2 \varphi dx + M_\varphi(t) \log C - M_\varphi(t)\log M_\varphi(t) + C
\end{align*}
and hence
\begin{align*}
\dfrac{1}{2}\left(1 - \dfrac{M_\varphi(t)}{8\pi}\right)\iB |\nabla w|^2 \varphi dx \leq M_\varphi(t) \log C + C.
\end{align*}
The assertion \eqref{eq:6.6.1} yields
\begin{align*}
\limsup_{t \to \infty}\iB |\nabla w|^2 \varphi dx \leq C,
\end{align*}
where $C > 0$ is a constant, which is contrary to \eqref{eq:6.4.1}. This completes the proof. 
\end{proof}

Finally we will give the proof of Theorem \ref{th:1}, building on the results established thus far. 
Our argument in this section is gained by reconstructing the method presented by Senba and Suzuki \cite{SS2001}, and Nagai, Senba, and Suzuki \cite{NSS2000}.
Here, although Theorem \ref{th:1} is similar to result achieved by \cite{SS2001, NSS2000},
the point to note that differ from ours is the existence of the limit of $u$ in the domain except the origin. If the maximal existence time $T_{\mathrm{max}}$ of the solutions is finite, the fact
\begin{align*}
\sup_{(x,t) \in ((\overline{\Omega}\setminus B_r) \times (0,T_{\mathrm{max}}))}|u_t(x,t)| < \infty
\end{align*}
makes it obvious that $\lim_{t \to T_{\mathrm{max}}} u(x,t)$ exists in $\overline{\Omega} \setminus B_r$ for any $r \in (0,R)$. However, we may not rule out the possibility of oscillation because of the maximal existence time $T_{\mathrm{max}} = \infty$. 
Hence we must carefully observe a large-time behavior of the solution to \eqref{p}, so that we make use of the compact theorem of the H\"older space instead of the argument above in \cite{SS2001, NSS2000}.

\begin{proof}[Proof of Theorem \ref{th:1}]
Let $0 < r \ll 1$ and $\varphi_{(0,r,n)}$ be as described in Lemma \ref{lem:30}. 
Here, from the Banach--Alaoglu theorem,
we pay attention to be able to take a subsequence $\{t_k\}$ of time with $t_k \to \infty$ as $k \to \infty$ such that for all $0 < r < 1$,
\begin{align*}
\lim_{k \to \infty} \iB u(x,t_k)\varphi_{(0,r,n)}(x) dx \ge 8\pi
\end{align*}
and we denote $m(r)$ by the left-hand side of the above inequality.
Since it follows from Lemma \ref{lem:30} that $m(r) - m(\frac{r}{2}) \ge 0$, $m(\cdot)$ is monotone decreasing for $0 < r \ll 1$ and bounded from below. On account of the above inequality, we see that there exists $m$ such that
\begin{align}
m = \lim_{r \to 0} m(r) \ge 8\pi.\label{eq:6.10.0}
\end{align}
Lemma \ref{lem:6.1} now gives for each $0 < r \ll 1$,
\begin{align*}
\sup_{t > 0}\|u(t)\|_{C^{2 + \alpha}(\overline{\Omega} \setminus B_r)} < \infty.
\end{align*}
For the reason that $\{u(\cdot,t_k)\}_{k \in \N}$ is relatively compact in $C^2(\overline{\Omega} \setminus B_r)$ with the help of the Arzel\`a--Ascoli theorem, the diagonal argument suggests there exist a subsequence of $\{t_k\}$, which is denoted by $\{t_k\}$, and the limit function $f \in C(\overline{\Omega} \setminus \{0\})$ such that
\begin{align}
\lim_{k \to \infty}u(x,t_k) = f(x)\ \mathrm{in}\ C_{loc}^2(\overline{\Omega} \setminus \{0\}).\label{eq:6.10.1}
\end{align}
According to the positivity of $u$ and the mass conservation law \eqref{pro:3.1.1}, we can deduce $f \ge 0$ and $f \in L^1(\Omega)$. What is left is to show the convergence to a delta function of the grow-up solution of \eqref{p}. 
To complete the proof, we calculate for $\xi \in C(\overline{\Omega})$ as follows:
\begin{align*}
&\iB u(x,t_k) \xi(x) dx - m \xi(0) - \iB f(x) \xi(x) dx\\
&= \xi(0)\left(\iB u(x,t_k) \varphi_{(0,r,n)}(x) dx - m\right) + \iB (\xi(x) - \xi(0)) u(x,t_k) \varphi_{(0,r,n)}(x) dx\\
&\hspace{0.5cm}- \iB \xi(x) f(x) \varphi_{(0,r,n)}(x) dx + \iB \xi(x)(u(x,t_k) - f(x))(1 - \varphi_{(0,r,n)}(x)) dx.
\end{align*}
Lemma \ref{lem:30} and \eqref{pro:3.1.1} lead to
\begin{align*}
&\left|\iB u(x,t_k) \xi(x) dx - m \xi(0) - \iB f(x) \xi(x) dx\right|\\
&\leq \|\xi\|_{L^\infty(\Omega)}\left|\iB u(x,t_k) \varphi_{(0,r,n)}(x) dx - m\right| + \|\xi - \xi(0)\|_{L^\infty(B_{2r})}\|u_0\|_{L^1(\Omega)}\\
&\hspace{0.5cm}+ \|\xi\|_{L^\infty(\Omega)}\iB f(x) \varphi_{(0,r,n)}(x) dx + \|\xi\|_{L^\infty(\Omega)}\|u(t_k)-f\|_{L^\infty(\Omega \setminus B_r)}
\end{align*}
and thanks to the uniform convergence \eqref{eq:6.10.1} mentioned above, it holds that
\begin{align*}
&\limsup_{k \to \infty}\left|\iB u(x,t_k) \xi(x) dx - m \xi(0) - \iB f(x) \xi(x) dx\right|\\
&\leq \|\xi\|_{L^\infty(\Omega)} |m(r) - m| + \|\xi - \xi(0)\|_{L^\infty(B_{2r})}\|u_0\|_{L^1(\Omega)} + \|\xi\|_{L^\infty(\Omega)}\iB f(x) \varphi_{(0,r,n)}(x)dx.
\end{align*}
Finally letting $r \to 0$, we can conclude from \eqref{eq:6.10.0} and $f \in L^1(\Omega)$ that
\begin{align*}
\limsup_{k \to \infty}\left|\iB u(x,t_k) \xi(x) dx - m \xi(0) - \iB f(x) \xi(x) dx\right| \leq 0,
\end{align*}
which is the desired conclusion.
\end{proof}

\subsection{Proof of Theorem \ref{th:3}}\label{subsec:3.6}

This section extends Theorem \ref{th:1} to derive new results under the assumption that the grow-up solution of \eqref{p} satisfies the uniform boundedness for $t\in [0,\infty )$ of the Lyapunov functional. We highlight that Theorem \ref{th:3} provides the interesting result unlike for the Keller--Segel system. Mizoguchi \cite{M2020_1, M2020_2} in the study of the Keller--Segel system showed the Lyapunov functional is uniformly bounded from below in time considering global-in-time solutions of the Keller--Segel system, and moreover the concentration of the mass in infinite time cannot occur. Therefore, we cannot give full play to the results in \cite{M2020_1}, nonetheless
the proof of Theorem \ref{th:3} is inspired by the argument using stationary solutions in \cite{M2020_1}. We first build a stationary solution to \eqref{p} which grows up at the origin in the same way as in \cite{WM2010}.
Throughout this subsection, we will continue to write the same symbol $t_k$ to denote a subsequence of time for abbreviation.

\begin{proposition}\label{pro:7.1}
Let $(u,v,w)$ be a radially symmetric grow-up solution of \eqref{p} with \eqref{c} and $\{t_k\}$ be a sequence of time with $t_k \to \infty$ as $k \to \infty$. If the solution $(u,v,w)$ satisfies
\begin{align*}
\inf_{t\ge 0} \F(t) > -\infty,
\end{align*}
then there exist a subsequence of time, which is denoted by $\{t_k\}$, and a radially symmetric function $(u_\infty, v_\infty, w_\infty) \in C^2(\overline{\Omega} \setminus \{0\}) \times C(\overline{\Omega} \setminus \{0\}) \times C^2(\overline{\Omega} \setminus \{0\})$ such that 
\begin{align}
&\lim_{k \to \infty} \iB u|\nabla (\log u- w)|^2(x,t_k) dx = 0,\label{eq:7.14.1}\\
&\lim_{k \to \infty} \iB |w_t(x,t_k)|^2 dx = 0,\label{eq:7.14.2}\\
&\lim_{k \to  \infty}(u(t_k),w(t_k)) = (u_\infty, w_\infty)\quad \mathrm{in}\ C_{loc}^2(\overline{\Omega} \setminus \{0\}),\label{eq:7.14.3}\\
&\lim_{k \to \infty}v(t_k) = v_\infty\quad \mathrm{in}\ C_{loc}(\overline{\Omega} \setminus \{0\}),\label{eq:7.14.4}
\end{align}
and moreover if $u_\infty \not\equiv 0$ in $\overline{\Omega} \setminus \{0\}$, then $(u_\infty, v_\infty, w_\infty)$ satisfies $u_\infty, v_\infty, w_\infty > 0$ in $\overline{\Omega} \setminus \{0\}$ as well as
\begin{equation}\label{sp}
\begin{cases}
u_\infty = v_\infty \qquad &\mathrm{in}\ \Omega \setminus \{0\},\\
-\Delta w_\infty + w_\infty  = u_\infty\qquad &\mathrm{in}\ \Omega \setminus \{0\},\\
\nabla (\log u_\infty - w_\infty) = 0\qquad &\mathrm{in}\ \Omega \setminus \{0\},\\
\dfrac{\partial u_\infty}{\partial \nu} = \dfrac{\partial w_\infty}{\partial \nu} = 0 \qquad &\mathrm{on}\ \partial \Omega.
\end{cases}
\end{equation}
\end{proposition}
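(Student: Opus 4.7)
The plan is to combine the dissipation identity from Lemma \ref{lem:3.41} with uniform interior parabolic regularity from Lemma \ref{lem:6.1}, and then to use a time-shift compactness trick to identify the stationary limits. Since $\F'(t) = -\D(t) \le 0$ and $\F$ is bounded from below by assumption, $\F(t)$ admits a finite limit and $\int_0^\infty \D(t)\,dt = \F(0) - \lim_{t\to\infty}\F(t) < \infty$. Each term of $\D$ being nonnegative, one can choose $t_k \to \infty$ along which $\iO u|\nabla(\log u - w)|^2(t_k)\,dx$, $\|w_t(t_k)\|_{L^2(\Omega)}^2$, and $\|\nabla w_t(t_k)\|_{L^2(\Omega)}^2$ all tend to zero; this is exactly \eqref{eq:7.14.1} and \eqref{eq:7.14.2}.

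For \eqref{eq:7.14.3} and \eqref{eq:7.14.4}, Lemma \ref{lem:6.1} supplies uniform bounds in $C^{2+\alpha,1+\alpha/2}((\overline{\Omega}\setminus B_r)\times(0,\infty))$ for $u$ and $w$ and in $C^{\alpha,1+\alpha/2}$ for $v$, for every $0 < r \ll 1$. The Arzelà--Ascoli theorem together with a diagonal argument as $r \to 0$ produces, along a further subsequence still denoted $\{t_k\}$, radially symmetric limits $(u_\infty, w_\infty) \in C^2(\overline{\Omega}\setminus\{0\})^2$ and $v_\infty \in C(\overline{\Omega}\setminus\{0\})$ realizing the stated local convergences; the Neumann condition on $\partial\Omega$ survives since $\partial\Omega$ is separated from the origin and the convergence holds in $C^2$ up to $\partial\Omega$.

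The delicate step is deriving \eqref{sp} under $u_\infty \not\equiv 0$. I would introduce the shifted sequence $(u^k,v^k,w^k)(x,\tau) := (u,v,w)(x, t_k + \tau)$ on $(\overline{\Omega}\setminus\{0\})\times [0,1]$; Lemma \ref{lem:6.1} again furnishes uniform Hölder bounds, so after a diagonal extraction $(u^k,v^k,w^k) \to (\tilde u, \tilde v, \tilde w)$ in $C^{2,1}_{\mathrm{loc}}((\overline{\Omega}\setminus\{0\})\times[0,1])$ with $(\tilde u, \tilde v, \tilde w)(\cdot,0) = (u_\infty, v_\infty, w_\infty)$. Because $\int_0^1 \|w^k_\tau\|_{L^2(\Omega)}^2\,d\tau = \int_{t_k}^{t_k+1}\|w_t\|_{L^2(\Omega)}^2\,dt \to 0$, the limit $\tilde w_\tau$ vanishes on compacts, so $\tilde w(x,\tau) = w_\infty(x)$. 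Passing to the limit in the third equation of \eqref{p} then gives $\tilde v(x,\tau) = -\Delta w_\infty + w_\infty$, which is $\tau$-independent and hence equals $v_\infty(x)$; plugging into the second equation $\tilde v_\tau = -\tilde v + \tilde u$ forces $\tilde u(x,\tau) \equiv v_\infty(x)$, and evaluation at $\tau = 0$ yields the crucial identity $u_\infty = v_\infty$, which together with the preceding step gives $-\Delta w_\infty + w_\infty = u_\infty$. Finally, \eqref{eq:7.14.1} and the $C^2_{\mathrm{loc}}$ convergence imply $\nabla(\log u_\infty - w_\infty) = 0$ on the open set $\{u_\infty > 0\}$; there $u_\infty = \lambda e^{w_\infty}$ on each connected component, and a clopen argument exploiting the continuity of $\lambda e^{w_\infty}$ across the boundary of $\{u_\infty>0\}$ inside the connected set $\overline{\Omega}\setminus\{0\}$ promotes this to $u_\infty > 0$ everywhere, whence $v_\infty > 0$ and, via the maximum principle applied to $-\Delta w_\infty + w_\infty = u_\infty$, also $w_\infty > 0$.

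The main obstacle I anticipate is the identification $u_\infty = v_\infty$: the dissipation $\D$ contains no term of the form $\|v_t\|_{L^2}^2$, so there is no direct way to compare $u(t_k)$ and $v(t_k)$ from Lemma \ref{lem:3.41} alone. The time-shift argument is essential precisely because it converts the quantitative $L^2$-smallness of $w_t$ on $[t_k, t_k+1]$ into the rigidity $\tilde w_\tau \equiv 0$, which can then be propagated through the ODE $v_t + v = u$ to rule out temporal oscillation of $\tilde u$ and $\tilde v$.
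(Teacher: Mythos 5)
Your overall architecture is sound and matches the standard strategy in this literature (dissipation integration, interior parabolic compactness via Lemma~\ref{lem:6.1}, time-shift rigidity to identify the stationary limit, and a clopen argument for positivity). The time-shift step is exactly right and is the only way to extract the identity $u_\infty = v_\infty$, since the dissipation $\D$ contains no $v_t$ term; you correctly flag this as the delicate point.

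There is, however, one genuine mismatch with the statement that you should not gloss over. The proposition takes a \emph{given} sequence $\{t_k\}$ and asks for a \emph{subsequence} of it. Your opening step, ``one can choose $t_k\to\infty$ along which the three summands of $\D$ tend to zero,'' constructs a \emph{fresh} sequence rather than extracting from the given one. From $\int_0^\infty\D(t)\,dt<\infty$ alone one gets only $\liminf_{t\to\infty}\D(t)=0$, and a given sequence $\{t_k\}$ may well sit on spikes of $\D$, so that no subsequence of it satisfies $\D(t_k)\to 0$. Since the proposition is applied in Lemma~\ref{lem:7.11} to a sequence chosen so that $\lim_k\int_\Omega u(t_k)\varphi\,dx\ge 8\pi$, replacing that sequence by a freshly constructed one would lose precisely the lower bound that is needed later; the subsequence property is not cosmetic. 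The clean fix is already latent in your own time-shift argument: for an \emph{arbitrary} sequence $\{t_k\}\to\infty$ one still has $\int_{t_k-1}^{t_k+1}\D(t)\,dt\to 0$, and together with the uniform Hölder bounds this forces the time-shifted limit to be a steady state satisfying \eqref{sp} along a subsequence of the \emph{original} $\{t_k\}$, which is what the downstream lemmas actually use. The pointwise decays \eqref{eq:7.14.1}--\eqref{eq:7.14.2} along that particular subsequence then need a separate word (for instance, noting that $w_t(\cdot,t_k)=w^k_\tau(\cdot,0)\to \tilde w_\tau(\cdot,0)=0$ locally uniformly away from the origin, and supplying a uniform smallness estimate for $\int_{B_\delta}|w_t(t_k)|^2\,dx$ as $\delta\to 0$); as written you obtain them only for a sequence of your own choosing.

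Two smaller points. First, when you derive $\nabla(\log u_\infty-w_\infty)=0$ you should state explicitly that this uses Fatou (or locally uniform convergence where $u_\infty>0$) applied to $\int_0^1\int_K u^k|\nabla(\log u^k-w^k)|^2\,dx\,d\tau\to 0$; the pointwise version \eqref{eq:7.14.1} is not needed for this. Second, the clopen argument is fine because $\overline{\Omega}\setminus\{0\}$ is connected and $u_\infty$ is continuous there, but the maximum principle step for $w_\infty>0$ deserves a remark that the equation $-\Delta w_\infty+w_\infty=u_\infty$ holds on the punctured domain and that the Neumann boundary data together with the Hopf lemma excludes a zero minimum on $\partial\Omega$.
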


Let us recall Proposition \ref{prop:6.6}: for $0 < r \ll 1$ and $\varphi = \varphi_{(0,r,n)}$ as in Lemma \ref{lem:30}, it holds that
\begin{align*}
\limsup_{ t \to \infty}\iB u(x,t) \varphi(x) dx \ge 8\pi.
\end{align*}
Taking a subsequence of time $\{t_k\}$ by the definition of $\limsup$, hereafter we will apply the above proposition with such a subsequence $\{t_k\}$.

\begin{lemma}\label{lem:7.11}
Let $u_\infty$ be as in Proposition \ref{pro:7.1} and $\|u_0\|_{L^1(\Omega)}> 8\pi$. Then it holds that
\begin{align}
\iB u_\infty (x) dx \leq \|u_0\|_{L^1(\Omega)} - 8\pi.\label{eq:7.2.1}
\end{align}
\end{lemma}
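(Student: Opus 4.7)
The plan is to combine the mass conservation law with the weak-$*$ decomposition from Theorem \ref{th:1} and to identify the regular part of that decomposition with $u_\infty$ using the locally uniform convergence supplied by Proposition \ref{pro:7.1}.

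First, starting with the subsequence $\{t_k\}$ given by Proposition \ref{pro:7.1}, I would extract a further subsequence (still denoted $\{t_k\}$) along which the conclusion of Theorem \ref{th:1} holds. This yields $m \geq 8\pi$ and a nonnegative $f \in L^1(\Omega) \cap C(\overline{\Omega}\setminus\{0\})$ with $u(\cdot,t_k)\stackrel{*}{\rightharpoonup}m\delta(0)+f$ in $\mathscr{M}(\overline{\Omega})$. Testing against the constant function $\xi \equiv 1$ and invoking the mass conservation law \eqref{pro:3.1.1} then produces the mass decomposition
\begin{align*}
\|u_0\|_{L^1(\Omega)} = \lim_{k\to\infty}\iB u(x,t_k)\,dx = m + \iB f(x)\,dx.
\end{align*}

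Second, I would identify $f$ with $u_\infty$ on $\Omega\setminus\{0\}$. For any $\xi\in C(\overline{\Omega})$ with $\mathrm{supp}\,\xi\subset\overline{\Omega}\setminus\{0\}$, the weak-$*$ convergence gives $\iB u(\cdot,t_k)\xi\,dx\to\iB f\xi\,dx$ (since $\xi(0)=0$), while the $C^2_{loc}(\overline{\Omega}\setminus\{0\})$-convergence from Proposition \ref{pro:7.1}, together with the uniform $L^\infty$-bound of $u$ away from the origin provided by Lemma \ref{lem:6.1}, yields $\iB u(\cdot,t_k)\xi\,dx\to\iB u_\infty\xi\,dx$ by dominated convergence. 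Hence $f=u_\infty$ almost everywhere on $\Omega\setminus\{0\}$, and since $\{0\}$ has Lebesgue measure zero this gives
\begin{align*}
\iB u_\infty(x)\,dx = \iB f(x)\,dx = \|u_0\|_{L^1(\Omega)} - m \leq \|u_0\|_{L^1(\Omega)} - 8\pi,
\end{align*}
which is the desired estimate.

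The main subtlety --- rather than a genuine obstacle --- is organising the subsequence extractions so that Proposition \ref{pro:7.1} and Theorem \ref{th:1} hold along a common $\{t_k\}$; the heavy lifting (existence of the Dirac-mass decomposition, the lower bound $m\geq 8\pi$ coming from Proposition \ref{prop:6.6}, and the local $C^2$-compactness of $u(t_k)$ away from the origin from Lemma \ref{lem:6.1}) has already been done, so the lemma follows essentially by bookkeeping.
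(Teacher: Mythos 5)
Your proof is correct, but it routes through the full weak-$*$ decomposition of Theorem~\ref{th:1} rather than working directly with the cut-off function as the paper does. The paper's proof is more economical: it fixes $\varphi=\varphi_{(0,r,n)}$, splits $\|u_0\|_{L^1(\Omega)}=\lim_k\iO u(t_k)\varphi\,dx+\lim_k\iO u(t_k)(1-\varphi)\,dx$, bounds the first limit below by $8\pi$ (via the choice of $\{t_k\}$ made just before the lemma using Proposition~\ref{prop:6.6}), identifies the second limit with $\iO u_\infty(1-\varphi)\,dx$ using \eqref{eq:7.14.3}, and finishes with Fatou's lemma as $r\to 0$. Your approach instead re-invokes Theorem~\ref{th:1} to produce $m\geq 8\pi$ and the regular part $f$, tests the weak-$*$ limit against $\xi\equiv 1$, and then identifies $f$ with $u_\infty$. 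What you gain is reuse of an already-proved statement, and what you pay is an extra layer of subsequence bookkeeping: you must verify that Theorem~\ref{th:1}'s construction can be rerun starting from the subsequence fixed in Proposition~\ref{pro:7.1} (this works, since that $\{t_k\}$ is precisely the one chosen to realize the $\limsup$ in Proposition~\ref{prop:6.6}, which is what Theorem~\ref{th:1}'s proof needs). One small simplification available to you: since $u(\cdot,t_k)\to u_\infty$ in $C^2_{loc}(\overline{\Omega}\setminus\{0\})$ along the Proposition~\ref{pro:7.1} subsequence, and Theorem~\ref{th:1}'s $f$ is the $C^2_{loc}$-limit along a further subsequence, $f=u_\infty$ follows immediately from uniqueness of limits; the dominated-convergence detour via Lemma~\ref{lem:6.1} is not needed. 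Both routes are sound and rest on the same underlying idea of splitting the conserved mass into a near-origin part of size at least $8\pi$ and a far-field part that converges to $\iO u_\infty\,dx$.
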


\begin{proof}
Let $0 < r \ll 1$ and $\varphi = \varphi_{(0,r,n)}$ be as in Lemma \ref{lem:30}, then Proposition \ref{prop:3.1.0} and Proposition \ref{prop:6.6} imply
\begin{align*}
\|u_0\|_{L^1(\Omega)} &= \lim_{k \to \infty} \iB u(x,t_k) \varphi (x) dx + \lim_{k \to \infty} \iB u(x,t_k)(1-\varphi (x)) dx\\
&\ge 8\pi + \iB u_\infty(x)(1- \varphi (x)) dx
\end{align*}
owing to \eqref{eq:7.14.3}. Then Fatou's lemma allows us to complete the proof.
\end{proof}

The following lemma is obtained by modifying \cite[Lemma 4.3]{M2020_1}.
\begin{lemma}\label{lem:7.12}
Let $w_\infty$ be as in Proposition \ref{pro:7.1}. Then it holds that
\begin{align}
\iB w_\infty (x) dx = \|u_0\|_{L^1(\Omega)}.\label{eq:7.3.1}
\end{align}
\end{lemma}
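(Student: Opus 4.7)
The plan is to derive $\int_\Omega w_\infty\,dx = \|u_0\|_{L^1(\Omega)}$ by combining two ingredients: (a) a direct ODE computation showing that $\int_\Omega w(\cdot,t)\,dx$ converges to $\|u_0\|_{L^1(\Omega)}$ as $t\to\infty$, and (b) an equi-integrability argument near the origin that lets me push the convergence $w(t_k)\to w_\infty$ (which a priori holds only in $C^2_{\mathrm{loc}}(\overline{\Omega}\setminus\{0\})$) into an $L^1(\Omega)$ convergence of integrals.

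For (a), I would integrate the second equation of \eqref{p} over $\Omega$ and combine with the mass conservation law \eqref{pro:3.1.1} to obtain $\tfrac{d}{dt}\int_\Omega v\,dx = -\int_\Omega v\,dx + \|u_0\|_{L^1(\Omega)}$, which gives $\int_\Omega v(\cdot,t)\,dx\to \|u_0\|_{L^1(\Omega)}$ exponentially (this is essentially \eqref{pro:3.1.2}). Then integrating the third equation over $\Omega$ and using the Neumann boundary condition yields
\[
\frac{d}{dt}\int_\Omega w\,dx + \int_\Omega w\,dx = \int_\Omega v\,dx,
\]
and solving this linear ODE (e.g.\ via Duhamel) produces $\int_\Omega w(\cdot,t)\,dx\to \|u_0\|_{L^1(\Omega)}$ as $t\to\infty$. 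In particular $\int_\Omega w(\cdot,t_k)\,dx\to \|u_0\|_{L^1(\Omega)}$ along the sequence from Proposition \ref{pro:7.1}.

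For (b), I would invoke Proposition \ref{pro:322}, which relies on radial symmetry and provides, for some $p\in(1,2)$ (e.g.\ $p=3/2$), a time-uniform pointwise bound $w(x,t)\le C|x|^{-(2-p)/p}$. The right-hand side is integrable on $\Omega\subset\mathbb R^2$, and by \eqref{eq:7.14.3} together with Fatou the same bound holds for $w_\infty$ on $\overline{\Omega}\setminus\{0\}$. Fix $\varepsilon>0$ and choose $r>0$ small enough that $C\int_{B_r}|x|^{-(2-p)/p}\,dx<\varepsilon$; then $\int_{B_r}w(\cdot,t_k)\,dx<\varepsilon$ for every $k$ and $\int_{B_r}w_\infty\,dx\le\varepsilon$. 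On the complement $\Omega\setminus B_r$, the $C^2_{\mathrm{loc}}$-convergence in \eqref{eq:7.14.3} yields $\int_{\Omega\setminus B_r}w(\cdot,t_k)\,dx\to\int_{\Omega\setminus B_r}w_\infty\,dx$. Letting $k\to\infty$ and then $\varepsilon\to 0$ gives $\int_\Omega w(\cdot,t_k)\,dx\to\int_\Omega w_\infty\,dx$, and comparison with (a) closes the proof.

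The main obstacle is step (b): the convergence $w(t_k)\to w_\infty$ only holds in $C^2_{\mathrm{loc}}(\overline{\Omega}\setminus\{0\})$, so without extra control near the origin one would only conclude $\int_\Omega w_\infty\,dx \le \|u_0\|_{L^1(\Omega)}$ by Fatou, allowing a potential "loss of mass" at $0$ (analogous to the concentration of $u$ at the origin in Theorem \ref{th:1}). The radial pointwise bound in Proposition \ref{pro:322} is exactly what prevents such a loss for the $w$-component, so the radial symmetry assumption is essential in this step.
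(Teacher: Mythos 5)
Your proposal is correct, and the key ingredient — using Proposition \ref{pro:322} to obtain a time-uniform integrable majorant $C|x|^{-(2-p)/p}$ for $w$, thereby upgrading the $C^2_{\mathrm{loc}}(\overline\Omega\setminus\{0\})$ convergence of \eqref{eq:7.14.3} into $\int_\Omega w(\cdot,t_k)\,dx\to\int_\Omega w_\infty\,dx$ — is precisely the ingredient the paper uses (the paper phrases it as dominated convergence, you phrase it as an explicit equi-integrability argument; these are the same thing). The one genuine difference is in part (a): you integrate the third equation over $\Omega$, treat $\int_\Omega w\,dx$ as the solution of the linear ODE $\phi'=-\phi+\int_\Omega v\,dx$, and solve it to conclude $\int_\Omega w(\cdot,t)\,dx\to\|u_0\|_{L^1(\Omega)}$ for the full limit $t\to\infty$. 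The paper instead evaluates the integrated third equation at $t=t_k$, uses H\"older together with \eqref{eq:7.14.2} (i.e.\ $\|w_t(t_k)\|_{L^2(\Omega)}\to 0$) to kill the term $\int_\Omega w_t(\cdot,t_k)\,dx$, and reads off $\int_\Omega w_\infty\,dx=\|u_0\|_{L^1(\Omega)}$ from the resulting algebraic identity. Your variant is marginally more self-contained, since it does not rely on the dissipation estimate \eqref{eq:7.14.2} from Proposition \ref{pro:7.1}, and it also shows convergence of $\int_\Omega w$ along every sequence rather than just along $\{t_k\}$; the paper's variant is slightly shorter because it avoids solving the ODE explicitly. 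Both are valid, and both stand or fall with Proposition \ref{pro:322}, as you correctly emphasize at the end.
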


\begin{proof}
The third equation of \eqref{p} and the $0$-Neumann boundary conditions infer that
\begin{align*}
\iB w_t(x,t_k) dx = -\iB w(x,t_k) dx + \iB v(x,t_k) dx.
\end{align*}
The second equation of \eqref{p} allows us to estimate as follows:
\begin{align*}
\iB w_t(x,t_k) dx = -\iB w(x,t_k) dx + e^{-t_k} \|v_0\|_{L^1(\Omega)} +  \int_0^{t_k} e^{-(t_k -s)}\|u(s)\|_{L^1(\Omega)} ds
\end{align*}
and according to the mass conservation law \eqref{pro:3.1.1}, we have
\begin{align*}
\iB w_t(x,t_k) dx = -\iB w(x,t_k) dx + e^{-t_k}\|v_0\|_{L^1(\Omega)} + (1-e^{-t_k})\|u_0\|_{L^1(\Omega)}.
\end{align*}
As to the left-hand side, using the H\"{o}lder inequality to obtain
\begin{align*}
\left|\iB w_t(x,t_k) dx\right| &\leq \iB |w_t(x,t_k)| dx\\
&\leq \left(|\Omega|\iB |w_t(x,t_k)|^2 dx\right)^\frac{1}{2},
\end{align*}
we get from \eqref{eq:7.14.2} that
\begin{align*}
\lim_{k \to \infty} \iB w_t(x,t_k) dx = 0.
\end{align*}
At the same time, since Proposition \ref{pro:322} makes the improper integral possible, we deduce from \eqref{eq:7.14.3} and Lebesgue's dominated convergence theorem that
\begin{align*}
\lim_{k \to \infty} \iB w(x,t_k) dx = \iB w_\infty (x) dx,
\end{align*}
and this clearly implies our claim.
\end{proof}

We will give the following lemma introduced by Mizoguchi \cite[Lemma 4.4]{M2020_1}, which completes the proof of Theorem \ref{th:3}.

\begin{lemma}\label{lem:7.13}
Let $m_0 \in (1,4)$ and $(u_\infty, v_\infty, w_\infty)$ be as in Proposition \ref{pro:7.1}. If $u_\infty \not\equiv 0$ in $\overline{\Omega} \setminus \{0\}$, then it holds that
\begin{align*}
\iB u_\infty (x) dx - \iB w_\infty (x) dx \ge -2\pi m_0.
\end{align*}
\end{lemma}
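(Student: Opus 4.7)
The plan is to exploit the stationary system \eqref{sp} obtained in Proposition \ref{pro:7.1} to reduce the difference of the two integrals to a boundary contribution at the origin via radial integration by parts, and then bound that contribution using the uniform lower bound on $\F$ inherited through the hypothesis of Proposition \ref{pro:7.1}.

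First, from $\nabla(\log u_\infty - w_\infty) = 0$ on the connected set $\Omega \setminus \{0\}$ together with the positivity of $u_\infty$ asserted in Proposition \ref{pro:7.1}, I extract a constant $\lambda > 0$ such that $u_\infty = \lambda e^{w_\infty}$ on $\overline{\Omega} \setminus \{0\}$. Substituting into the second equation of \eqref{sp} yields the semilinear elliptic equation $-\Delta w_\infty + w_\infty = \lambda e^{w_\infty}$ on $\Omega \setminus \{0\}$ with $\partial_\nu w_\infty = 0$ on $\partial \Omega$. Radial symmetry turns this into $-(r w_\infty')' + r w_\infty = \lambda r e^{w_\infty}$ on $(0,R)$ with $w_\infty'(R) = 0$.

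Integrating this ODE from $\varepsilon$ to $R$ and multiplying by $2\pi$, the Neumann condition at $R$ gives
\begin{align*}
2\pi \varepsilon\, w_\infty'(\varepsilon) + \int_{\{\varepsilon < |x| < R\}} w_\infty \, dx = \int_{\{\varepsilon < |x| < R\}} u_\infty \, dx.
\end{align*}
Since Lemmas \ref{lem:7.11} and \ref{lem:7.12} provide $u_\infty, w_\infty \in L^1(\Omega)$, dominated convergence as $\varepsilon \to 0^+$ yields
\begin{align*}
\iB u_\infty \, dx - \iB w_\infty \, dx = \lim_{\varepsilon \to 0^+} 2\pi \varepsilon\, w_\infty'(\varepsilon) \,=:\, -2\pi \alpha,
\end{align*}
with $\alpha \geq 0$: non-negativity follows because $u_\infty \not\equiv 0$ forces $w_\infty$ to blow up at the origin (otherwise the elliptic equation would extend $w_\infty \in C^2(\overline{\Omega})$ and standard regularity would contradict the grow-up assumption), so $w_\infty$ is decreasing near $r = 0$ and $\varepsilon w_\infty'(\varepsilon) \leq 0$. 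The lemma therefore reduces to proving $\alpha \leq m_0$.

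The main obstacle is this last estimate $\alpha \leq m_0$. The strategy is to translate the asymptotic profile $w_\infty(r) = \alpha \log(1/r) + O(1)$ near $r=0$ into a quantitative constraint by exploiting the Lyapunov lower bound $\inf_{t \geq 0} \F(t) > -\infty$ through its inheritance at the limit $t_k \to \infty$. Combining the identity $\int u_\infty \log u_\infty - \int u_\infty w_\infty = (\log \lambda)\int u_\infty$ (which follows from $u_\infty = \lambda e^{w_\infty}$) with the Trudinger--Moser-type concentration estimate of Lemma \ref{lem:6.3} applied to $w_\infty$ in a small ball around the origin, one can compare the contribution of the singular part to $\F(t_k)$ against a model logarithmic profile of coefficient $m_0$; the choice $m_0 > 1$ is required so that the nonlinear term $\lambda e^{w_\infty} \sim \lambda r^{-\alpha}$ does not overwhelm the comparison in an annulus (where one would otherwise lose control of the ODE), while $m_0 < 4$ keeps $2\pi m_0$ strictly below the $8\pi$ Trudinger--Moser threshold and is used to absorb the Dirichlet energy term $\tfrac{1}{2}\int |\nabla w_\infty|^2$ into the Lyapunov bound. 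Making this comparison rigorous, in the spirit of the stationary analysis of \cite{M2020_1}, constitutes the principal technical hurdle: one must produce, for each $m_0 \in (1,4)$, an explicit supersolution of the radial singular Liouville-type equation that dominates $w_\infty$ on a small annulus and then pass to the derivative at $\varepsilon \to 0$ to conclude $\alpha \leq m_0$.
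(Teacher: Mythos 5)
Your first step is correct and is the natural reduction: from the third equation of \eqref{sp} and the positivity of $u_\infty$ on the connected set $\Omega\setminus\{0\}$ you get $u_\infty=\lambda e^{w_\infty}$, and the radial integration of $-(rw_\infty')'+rw_\infty=ru_\infty$ over $(\varepsilon,R)$ together with $w_\infty'(R)=0$ and $u_\infty,w_\infty\in L^1(\Omega)$ (Lemmas \ref{lem:7.11} and \ref{lem:7.12}) shows that $\lim_{\varepsilon\to 0}2\pi\varepsilon w_\infty'(\varepsilon)=:-2\pi\alpha$ exists and equals $\iB u_\infty\,dx-\iB w_\infty\,dx$. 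Note, however, that the paper itself does not prove this lemma at all: it imports it verbatim from Mizoguchi \cite[Lemma 4.4]{M2020_1}, so there is no internal argument to compare yours against.

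The genuine gap is that the entire content of the lemma is the estimate $\alpha\le m_0$, and your proposal does not prove it — you explicitly defer it as ``the principal technical hurdle'' and offer only a heuristic. That heuristic is also not convincing as stated: Lemma \ref{lem:6.3} is an estimate for the time-dependent solution with a cutoff and there is no justification for applying it to the stationary limit $w_\infty$; the claimed roles of $m_0>1$ and $m_0<4$ are asserted rather than derived; and the Lyapunov lower bound \eqref{eq:i3} is already consumed in Proposition \ref{pro:7.1} to produce the stationary limit, so it is not the natural tool here. The standard route (and, in substance, Mizoguchi's) is elliptic, not variational: once $u_\infty=\lambda e^{w_\infty}\in L^1(\Omega)$, the classification of isolated singularities of the radial Liouville-type equation $-\Delta w_\infty+w_\infty=\lambda e^{w_\infty}$ gives $w_\infty(r)=\alpha\log(1/r)+O(1)$ with $rw_\infty'(r)\to-\alpha$, and the integrability $\int_0 r^{1-\alpha}\,dr<\infty$ of $u_\infty$ forces $\alpha\le 2$; this yields the inequality for every $m_0\ge 2$, which is all that the proof of Theorem \ref{th:3} actually uses ($m_0<4$ being the operative constraint there). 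Without either this singularity analysis or the construction of the supersolutions you allude to, the proposal establishes only the identity $\iB u_\infty\,dx-\iB w_\infty\,dx=-2\pi\alpha$, not the lemma.
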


\begin{proof}[Proof of Theorem \ref{th:3}]
Suppose $u_\infty \not\equiv 0$ in $\overline{\Omega} \setminus \{0\}$. Thanks to Lemma \ref{lem:7.11}-\ref{lem:7.13}, we have
\begin{align*}
-2\pi m_0 + \|u_0\|_{L^1(\Omega)} &= -2\pi m_0 + \iB w_\infty (x) dx\\
&\leq \iB u_\infty (x) dx\\
&\leq \|u_0\|_{L^1(\Omega)} - 8\pi.
\end{align*}
Since $m_0 \in (1,4)$ and $\|u_0\|_{L^1(\Omega)} > 8\pi$, this is impossible. Therefore we obtain $u_\infty \equiv 0$ in $\overline{\Omega} \setminus \{0\}$. Let $0 < r \ll 1$ and $\varphi = \varphi_{(0,r,n)}$ be as in Lemma \ref{lem:30}, we can sharpen Proposition \ref{prop:6.6} and prove that
\begin{align}
\|u_0\|_{L^1(\Omega)} &= \lim_{k \to \infty} \iB u(x, t_k)\varphi (x) dx + \lim_{k \to \infty} \iB u(x, t_k)(1-\varphi (x)) dx\notag\\
&= \lim_{k \to \infty} \iB u(x,t_k)\varphi(x) dx\label{eq:7.222}
\end{align}
owing to the mass conservation law \eqref{pro:3.1.1} and \eqref{eq:7.14.3}. We are now ready to finish the proof. The analysis similar to that in the proof of Theorem \ref{th:1} shows for $\xi \in C(\overline{\Omega})$,
\begin{align*}
&\iB u(x,t_k)\xi (x) dx - \|u_0\|_{L^1(\Omega)}\xi (0)\\
&= \iB u(x,t_k) \xi(x) dx - \iB u(x,t_k) \xi(0) dx\\
&= \iB u(x,t_k)(\xi(x) - \xi(0)) dx\\
&=\iB u(x,t_k)\varphi(x)(\xi(x)-\xi(0)) dx + \iB u(x,t_k)(1-\varphi(x))(\xi(x)-\xi(0))dx
\end{align*}
due to \eqref{pro:3.1.1}. Lemma \ref{lem:30} yields that
\begin{align*}
&\left|\iB u(x,t_k)\xi (x) dx - \|u_0\|_{L^1(\Omega)}\xi (0)\right|\\
&\leq \|\xi - \xi(0)\|_{L^\infty(B_{2r})}\iB u(x,t_k)\varphi(x) dx + \|\xi - \xi(0)\|_{L^\infty(\Omega \setminus B_r)} \iB u(x,t_k)(1-\varphi(x)) dx.
\end{align*}
Hence \eqref{eq:7.14.3} and \eqref{eq:7.222} contribute to achieving that 
\begin{align*}
\lim_{k \to \infty} \left|\iB u(x,t_k)\xi (x) dx - \|u_0\|_{L^1(\Omega)}\xi (0)\right| \leq 
\|\xi - \xi(0)\|_{L^\infty(B_{2r})}\|u_0\|_{L^1(\Omega)}
\end{align*}
and consequently, this implies the end of the proof by letting $r \to 0$.
\end{proof}

\textbf{Acknowledgments}

The author would like to thank the referee for careful reading of the
manuscript and for constructive comments.
The author wishes to thank my supervisor Kentaro Fujie in Tohoku University for several helpful comments and his encouragement concerning this paper.

\textbf{Data availability}

Data sharing is not applicable to this article since no datasets were generated or analyzed in this study.

\textbf{Declarations}

\textbf{Conflict of interest}

The author declares that there is no conflict of interest.


\begin{bibdiv}
\begin{biblist}

\bib{A1979}{article}{
   author={Alikakos, N. D.},
   title={$L\sp{p}$\ bounds of solutions of reaction-diffusion equations},
   journal={Comm. Partial Differential Equations},
   volume={4},
   date={1979},
   number={8},
   pages={827--868},
}

\bib{BBTW2015}{article}{
   author={Bellomo, N.},
   author={Bellouquid, A.},
   author={Tao, Y.},
   author={Winkler, M.},
   title={Toward a mathematical theory of Keller-Segel models of pattern
   formation in biological tissues},
   journal={Math. Models Methods Appl. Sci.},
   volume={25},
   date={2015},
   number={9},
   pages={1663--1763},
}

\bib{B1998}{article}{
   author={Biler, Piotr},
   title={Local and global solvability of some parabolic systems modelling
   chemotaxis},
   journal={Adv. Math. Sci. Appl.},
   volume={8},
   date={1998},
   number={2},
   pages={715--743},
}

\bib{BHN1994}{article}{
   author={Biler, Piotr},
   author={Hebisch, Waldemar},
   author={Nadzieja, Tadeusz},
   title={The Debye system: existence and large time behavior of solutions},
   journal={Nonlinear Anal.},
   volume={23},
   date={1994},
   number={9},
   pages={1189--1209},
}

\bib{CY1988}{article}{
   author={Chang, Sun-Yung A.},
   author={Yang, Paul C.},
   title={Conformal deformation of metrics on $S^2$},
   journal={J. Differential Geom.},
   volume={27},
   date={1988},
   number={2},
   pages={259--296},
}

\bib{FA1964}{book}{
   author={Friedman, Avner},
   title={Partial differential equations of parabolic type},
   publisher={Prentice-Hall, Inc., Englewood Cliffs, NJ},
   date={1964},
   pages={xiv+347},
}

\bib{FS2016}{article}{
   author={Fujie, Kentarou},
   author={Senba, Takasi},
   title={Global existence and boundedness of radial solutions to a two
   dimensional fully parabolic chemotaxis system with general sensitivity},
   journal={Nonlinearity},
   volume={29},
   date={2016},
   number={8},
   pages={2417--2450},
}

\bib{HD1981}{book}{
   author={Henry, Daniel},
   title={Geometric theory of semilinear parabolic equations},
   series={Lecture Notes in Mathematics},
   volume={840},
   publisher={Springer-Verlag, Berlin-New York},
   date={1981},
   pages={iv+348},
}

\bib{HV1996}{article}{
   author={Herrero, Miguel A.},
   author={Vel\'azquez, Juan J. L.},
   title={Chemotactic collapse for the Keller-Segel model},
   journal={J. Math. Biol.},
   volume={35},
   date={1996},
   number={2},
   pages={177--194},
}

\bib{HV1997}{article}{
   author={Herrero, Miguel A.},
   author={Vel\'azquez, Juan J. L.},
   title={A blow-up mechanism for a chemotaxis model},
   journal={Ann. Scuola Norm. Sup. Pisa Cl. Sci. (4)},
   volume={24},
   date={1997},
   number={4},
   pages={633--683 (1998)},
}

\bib{H2003}{article}{
   author={Horstmann, Dirk},
   title={From 1970 until present: the Keller-Segel model in chemotaxis and
   its consequences. I},
   journal={Jahresber. Deutsch. Math.-Verein.},
   volume={105},
   date={2003},
   number={3},
   pages={103--165},
}

\bib{HW2001}{article}{
   author={Horstmann, Dirk},
   author={Wang, Guofang},
   title={Blow-up in a chemotaxis model without symmetry assumptions},
   journal={European J. Appl. Math.},
   volume={12},
   date={2001},
   number={2},
   pages={159--177},
}

\bib{KS1970}{article}{
   author={Keller, Evelyn F.},
   author={Segel, Lee A.},
   title={Initiation of slime mold aggregation viewed as an instability},
   journal={J. Theoret. Biol.},
   volume={26},
   date={1970},
   number={3},
   pages={399--415},
}

\bib{LSU1968}{book}{
   author={Lady\v zenskaja, O. A.},
   author={Solonnikov, V. A.},
   author={Ural\cprime ceva, N. N.},
   title={Linear and quasilinear equations of parabolic type},
   language={Russian},
   series={Translations of Mathematical Monographs},
   volume={Vol. 23},
   note={Translated from the Russian by S. Smith},
   publisher={American Mathematical Society, Providence, RI},
   date={1968},
   pages={xi+648},
}

\bib{L2019}{article}{
   author={Lauren\c cot, Philippe},
   title={Global bounded and unbounded solutions to a chemotaxis system with
   indirect signal production},
   journal={Discrete Contin. Dyn. Syst. Ser. B},
   volume={24},
   date={2019},
   number={12},
   pages={6419--6444}
}

\bib{LS2021}{article}{
   author={Lauren\c cot, Philippe},
   author={Stinner, Christian},
   title={Mass threshold for infinite-time blowup in a chemotaxis model with
   split population},
   journal={SIAM J. Math. Anal.},
   volume={53},
   date={2021},
   number={3},
   pages={3385--3419},
}

\bib{LA2015}{book}{
   author={Lunardi, Alessandra},
   title={Analytic semigroups and optimal regularity in parabolic problems},
   series={Modern Birkh\"auser Classics},
   note={[2013 reprint of the 1995 original] [MR1329547]},
   publisher={Birkh\"auser/Springer Basel AG, Basel},
   date={1995},
   pages={xviii+424},
}

\bib{ML2024}{article}{
   author={Mao, Xuan},
   author={Li, Yuxiang},
   title={Dirac-type aggregation with full mass in a chemotaxis model},
   journal={Discrete Contin. Dyn. Syst. Ser. S},
   volume={17},
   date={2024},
   number={4},
   pages={1513--1528},

}

\bib{M2020_1}{article}{
   author={Mizoguchi, Noriko},
   title={Criterion on initial energy for finite-time blowup in
   parabolic-parabolic Keller-Segel system},
   journal={SIAM J. Math. Anal.},
   volume={52},
   date={2020},
   number={6},
   pages={5840--5864},
}

\bib{M2020_2}{article}{
   author={Mizoguchi, Noriko},
   title={Finite-time blowup in Cauchy problem of parabolic-parabolic
   chemotaxis system},
   language={English, with English and French summaries},
   journal={J. Math. Pures Appl. (9)},
   volume={136},
   date={2020},
   pages={203--238},
}

\bib{MJ1970}{article}{
   author={Moser, J.},
   title={A sharp form of an inequality by N. Trudinger},
   journal={Indiana Univ. Math. J.},
   volume={20},
   date={1970/71},
   pages={1077--1092},
}

\bib{N1995}{article}{
   author={Nagai, Toshitaka},
   title={Blow-up of radially symmetric solutions to a chemotaxis system},
   journal={Adv. Math. Sci. Appl.},
   volume={5},
   date={1995},
   number={2},
   pages={581--601},
}

\bib{NSS2000}{article}{
   author={Nagai, Toshitaka},
   author={Senba, Takasi},
   author={Suzuki, Takashi},
   title={Chemotactic collapse in a parabolic system of mathematical
   biology},
   journal={Hiroshima Math. J.},
   volume={30},
   date={2000},
   number={3},
   pages={463--497}
}

\bib{NSY1997}{article}{
   author={Nagai, Toshitaka},
   author={Senba, Takasi},
   author={Yoshida, Kiyoshi},
   title={Application of the Trudinger-Moser inequality to a parabolic
   system of chemotaxis},
   journal={Funkcial. Ekvac.},
   volume={40},
   date={1997},
   number={3},
   pages={411--433},
}

\bib{OSS2007}{article}{
   author={Ohtsuka, Hiroshi},
   author={Senba, Takasi},
   author={Suzuki, Takashi},
   title={Blowup in infinite time in the simplified system of chemotaxis},
   journal={Adv. Math. Sci. Appl.},
   volume={17},
   date={2007},
   number={2},
   pages={445--472},
}

\bib{QS2007}{book}{
   author={Quittner, Pavol},
   author={Souplet, Philippe},
   title={Superlinear parabolic problems},
   series={Birkh\"auser Advanced Texts: Basler Lehrb\"ucher. [Birkh\"auser
   Advanced Texts: Basel Textbooks]},
   note={Blow-up, global existence and steady states},
   publisher={Birkh\"auser Verlag, Basel},
   date={2007},
   pages={xii+584}, 
}

\bib{SS2001}{article}{
   author={Senba, Takasi},
   author={Suzuki, Takashi},
   title={Chemotactic collapse in a parabolic-elliptic system of
   mathematical biology},
   journal={Adv. Differential Equations},
   volume={6},
   date={2001},
   number={1},
   pages={21--50},
}

\bib{SS2001_2}{article}{
   author={Senba, Takasi},
   author={Suzuki, Takashi},
   title={Parabolic system of chemotaxis: blowup in a finite and the
   infinite time},
   note={IMS Workshop on Reaction-Diffusion Systems (Shatin, 1999)},
   journal={Methods Appl. Anal.},
   volume={8},
   date={2001},
   number={2},
   pages={349--367},
}

\bib{SS2002}{article}{
   author={Senba, Takasi},
   author={Suzuki, Takashi},
   title={Time global solutions to a parabolic-elliptic system modelling
   chemotaxis},
   journal={Asymptot. Anal.},
   volume={32},
   date={2002},
   number={1},
   pages={63--89},
}

\bib{STP2013}{article}{
   author={Strohm, S.},
   author={Tyson, R. C.},
   author={Powell, J. A.},
   title={Pattern formation in a model for mountain pine beetle dispersal:
   linking model predictions to data},
   journal={Bull. Math. Biol.},
   volume={75},
   date={2013},
   number={10},
   pages={1778--1797},
}

\bib{ST2005}{book}{
   author={Suzuki, Takashi},
   title={Free energy and self-interacting particles},
   series={Progress in Nonlinear Differential Equations and their
   Applications},
   volume={62},
   publisher={Birkh\"auser Boston, Inc., Boston, MA},
   date={2005},
   pages={xiv+366},
}

\bib{T1997}{book}{
   author={Tanabe, Hiroki},
   title={Functional analytic methods for partial differential equations},
   series={Monographs and Textbooks in Pure and Applied Mathematics},
   volume={204},
   publisher={Marcel Dekker, Inc., New York},
   date={1997},
   pages={x+414},
}

\bib{TW2017}{article}{
   author={Tao, Youshan},
   author={Winkler, Michael},
   title={Critical mass for infinite-time aggregation in a chemotaxis model
   with indirect signal production},
   journal={J. Eur. Math. Soc. (JEMS)},
   volume={19},
   date={2017},
   number={12},
   pages={3641--3678},
}

\bib{W1980}{article}{
   author={Weissler, Fred B.},
   title={Local existence and nonexistence for semilinear parabolic
   equations in $L\sp{p}$},
   journal={Indiana Univ. Math. J.},
   volume={29},
   date={1980},
   number={1},
   pages={79--102},
}

\bib{WP1998}{article}{
   author={White, Peter},
   author={Powell, James},
   title={Spatial invasion of pine beetles into lodgepole forests: a
   numerical approach},
   journal={SIAM J. Sci. Comput.},
   volume={20},
   date={1998},
   number={1},
   pages={164--184},
}

\bib{WM2010}{article}{
   author={Winkler, Michael},
   title={Aggregation vs. global diffusive behavior in the
   higher-dimensional Keller-Segel model},
   journal={J. Differential Equations},
   volume={248},
   date={2010},
   number={12},
   pages={2889--2905},
}

\bib{WM2013}{article}{
   author={Winkler, Michael},
   title={Finite-time blow-up in the higher-dimensional parabolic-parabolic
   Keller-Segel system},
   language={English, with English and French summaries},
   journal={J. Math. Pures Appl. (9)},
   volume={100},
   date={2013},
   number={5},
   pages={748--767},
}

\end{biblist}
\end{bibdiv}
\bigskip
\address{ 
Mathematical Institute \\
Tohoku University \\
Sendai 980-8578 \\
Japan
}
{soga.yuri.q6@dc.tohoku.ac.jp}

\end{document}